\protected\def\ignorethis#1\endignorethis{}
\let\endignorethis\relax
\def\TOCstop{\addtocontents{toc}{\ignorethis}}
\def\TOCstart{\addtocontents{toc}{\endignorethis}}
\newcommand{\IR}{{\mathbb{R}}}
\newcommand{\IN}{{\mathbb{N}_0}}
\newcommand{\IZ}{{\mathbb{Z}}}
\newcommand{\eChar}{\begin{enumerate}[(i)]}
\newcommand{\eCharR}{\begin{enumerate}[(a)]}
\newcommand{\eBr}{\begin{enumerate}[(1)]}
\newcommand{\vol}{\operatorname{vol}}
\newcommand{\eps}{\varepsilon}
\newcommand{\Abstract}
\newcommand{\Gc}{\mathcal{G}}
\newcommand{\Fc}{\mathcal{F}}
\newcommand{\R}{{\mathbb R}}
\newcommand{\N}{{\mathbb N}}
\newcommand{\Z}{{\mathbb Z}}
\newcommand{\CDE}{\mathrm{CDE}}
\newcommand{\CCD}{\mathrm{CD}}
\newcommand{\Hm}[1]{\leavevmode{\marginpar{\tiny%
$\hbox to 0mm{\hspace*{-0.5mm}$\leftarrow$\hss}%
\vcenter{\vrule depth 0.1mm height 0.1mm width \the\marginparwidth}%
\hbox to 0mm{\hss$\rightarrow$\hspace*{-0.5mm}}$\\\relax\raggedright
#1}}}
\title[Ricci curvature on birth-death processes]{Ricci curvature on birth-death processes}
\author{Bobo Hua}
\author{Florentin Münch}
\date{\today}
\theoremstyle{plain}
\newtheorem{lemma}{Lemma}[section]
\newtheorem{theorem}[lemma]{Theorem}
\newtheorem{proposition}[lemma]{Proposition}
\newtheorem{corollary}[lemma]{Corollary}
\theoremstyle{definition}
\newtheorem{example}[lemma]{Example}
\newtheorem{rem}[lemma]{Remark}
\newtheorem{defn}[lemma]{Definition}
\numberwithin{equation}{section}
\begin{document}








\begin{abstract}
In this paper, we study curvature dimension conditions on  birth-death processes which correspond to linear graphs, i.e., 
weighted graphs supported on the infinite line or the half line. We give a combinatorial characterization of Bakry and Émery's $\CCD(K,n)$ condition for linear graphs and prove the triviality of edge weights for every linear graph supported on the infinite line $\IZ$ with non-negative curvature.
Moreover, we show that linear graphs with curvature decaying not faster than $-R^2$ are stochastically complete.
We deduce a type of Bishop-Gromov comparison theorem for normalized linear graphs. For normalized linear graphs with non-negative curvature, we obtain the volume doubling property and
the Poincar\'e inequality, which yield Gaussian heat kernel estimates and parabolic Harnack inequality by Delmotte's result. As applications, we generalize the volume growth and stochastic completeness properties to weakly spherically symmetric graphs. Furthermore, we give examples of infinite graphs with a positive lower curvature bound. 
\end{abstract}

\maketitle

\tableofcontents


\section{Introduction}\label{s:intro}

One of the driving forces in discrete differential geometry is the philosophy that 
many concepts and results from classical Riemannian geometry have discrete analogues.
Since intensive research in classical Riemannian geometry has been undertaken for more than a century, the subject of discrete differential geometry is comparably young and attracted interest only for a few decades. Therefore it is comprehensible that many results are well known in a continuous setting but unknown and sometimes even completely out of reach in 
the discrete
 setting.
One might wonder what is the feature that makes 
The challenge on manifolds is to find a suitable function space with appropriate differentiability assumptions. This indeed is trivial for finite graphs since all functions are continuous w.r.t. the discrete topology. However, one of the main challenges in discrete differential geometry is that there is no proper  notion of derivatives and therefore, no chain rule is available for the graph Laplace operator.

We now briefly discuss which results from Riemannian manifolds we aim to transfer to the discrete setting.
One of them is the relation between curvature, volume growth, parabolic Harnack inequality and Gaussian heat kernel estimates.
On Riemannian manifolds, Grigor'yan \cite{Grigoryan91} and Saloff-Coste \cite{saloff1995parabolic} proved the equivalence of parabolic Harnack inequality, Gaussian heat kernel estimates and volume doubling in combination with the Poincar\'e inequality. Due to the celebrated paper by Li and Yau \cite{li1986parabolic}, we know that every complete Riemannian manifold with non-negative Ricci curvature satisfies the parabolic Harnack inequality and therefore, the full characterization of Grigor'yan and Saloff-Coste.
Delmotte \cite{Delmotte1999parabolic} proved that this characterization also holds true in the discrete setting. However it is still unclear whether non-negative Ricci curvature implies Harnack inequalities on graphs.
Another result on Riemannian manifolds we are interested in is the relationship between the Ricci curvature and stochastic completeness.
Intuitively spoken, a Riemannian manifold is stochastically complete if
the probability that the Brownian motion leaves every compact set in finite time is zero.
In particular, it is shown in  \cite{varopoulos1983potential,hsu1989heat} that a decay of Ricci curvature not faster than $-R^2$ implies stochastic completeness.

One of the main difficulties to transfer these results to the discrete setting is to find an appropriate notion of Ricci curvature on graphs.
There are various approaches to do so.
Ollivier defined discrete Ricci curvature via optimal transport \cite{ollivier2009ricci}.
Erbar, Maas and Mielke \cite{erbar2012ricci,mielke2013geodesic} defined discrete Ricci curvature via convexity of the entropy.
The discrete Ricci curvature we consider in this paper is based on the Bakry-\'Emery calculus \cite{BakryEmery85} which has been adopted to the discrete setting in \cite{schmuckenschlager1998curvature,LinYau10}.

Recently, various deep results related to Bakry-\'Emery curvature on graphs have been found.
In \cite{HornLinLiuYau14} it is shown that the exponential curvature dimension condition $\CDE'(0,D)$ implies
the full Delmotte characterization \cite{Delmotte1999parabolic} and in particular Gaussian heat kernel estimates, the parabolic Harnack inequality and the volume doubling. 
In \cite{munch2017remarks} it is shown that $\CDE'(0,D)$ implies 
Bakry and \'Emery's curvature dimension condition $\CCD(0,D).$ Bakry-\'Emery curvature can be reduced to the computation of the smallest eigenvalue of a matrix of reasonable size, and a wide class of examples with non-negative curvature is known, see \cite{cushing2016bakry}.
Naturally, the question arises whether the weaker $\CCD(0,D)$ also implies the full Delmotte characterization. However, this question seems to be hard in general. In this paper, we study a special class of graphs, called linear graphs, which are weighted and undirected graphs supported on the infinite line or its subset.

In the terminology of Markov processes, linear graphs 
associate with birth-death processes (or birth-death chains) according to the weights of the graphs.
We give an affirmative answer to the above question for normalized linear graphs, 
see Corollary~\ref{cor:CDimpliesVD} and Theorem~\ref{thm: CD implies Poincare inequality}.
Interestingly, the dimension parameter $D$ is directly related to the maximal volume growth rate given by $\vol(B_R(x)) \in \mathcal O(R^{D-1})$ where the exponent $D-1$ turns out to be optimal. This relationship is somehow unexpected since the standard lattice $\IZ^D$ satisfies $\CCD(0,2D)$ with optimal dimension parameter $2D$ and has volume growth $\vol(B_R(x)) \in \mathcal O(R^D)$. In this sense, the linear graph has even larger volume growth than the lattice. This contradicts the intuition from manifolds where the Bishop-Gromov theorem implies that $\IR^d$ has the largest volume growth among all $d$-dimensional manifolds with non-negative Ricci curvature.

Another deep result relating to Bakry-\'Emery curvature is that every constant curvature bound in Bakry-\'Emery sense implies stochastic completeness when assuming a non-degenerate vertex measure and a certain completeness assumption \cite{HuaLin17}.
Here it is natural to ask whether the additional assumptions, i.e., non-degenerate vertex measure and completeness are actually needed.
It is well known that in manifolds, even the curvature decaying like $-R^2$ implies stochastic completeness \cite[Theorem~15.4]{Grigoryan99}.
One might wonder if the same holds true in the graph setting.
In this paper, we positively answer both questions for physical linear graphs: Every physical linear graph with Ricci curvature decaying not faster than $-R^2$ is stochastically complete, see Theorem~\ref{thm:scwithdecay}.
While \cite{HornLinLiuYau14} and \cite{HuaLin17} both use heat equation techniques for proving parabolic Harnack inequalities and stochastic completeness, we are able to bypass the use of heat equation and to prove the results in a more direct fashion using a certain local growth rate determined by the curvature.

\subsection{Setup and notation}

Let $(V,E)$ be a connected, undirected, simple graph with the set of vertices $V$ and the set of edges $E.$ Two vertices $x,y$ are called neighbors, denoted by $x\sim y,$ if there is an edge connecting $x$ and $y.$ 
We only consider locally finite graphs, i.e. for each vertex there are only finitely many neighbors.
 We assign a weight $m$ to each vertex, $m: V\to (0,\infty),$ and a weight $w$ to each edge, $$w:E\to (0,\infty),\ E\ni \{x,y\}\mapsto w(x,y)=w(y,x),$$ and refer to the quadruple $G=(V,E,w,m)$ as a \emph{weighted graph}. The weighted graph $(V,E,w,m)$ is called \emph{physical} if $m(x)=1,$ for all $x\in V,$ and is called \emph{normalized} if $m(x)=\sum_{y\in V:y\sim x} w(x,y)$ for all $x\in V.$

A weighted graph $(V,E,w,m)$ is called a \emph{linear graph} (or one-dimensional graph) if $V$ can be identified with $\IZ \cap I$ for a bounded or unbounded interval $I\subset\R$ and $E=\{\{n,n+1\}: n,n+1 \in V\}.$ For simplicity, we write $(V,w,m)$ for a linear graph where the edge set $E$ is clear in the content. Two special cases are of particular interest:
\begin{enumerate}[(a)]
\item For the case $I=\R,$ we say the linear graph is supported on the infinite line, denoted by $(\IZ,w,m).$
\item For the case $I=[0,\infty),$ we say the linear graph is supported on the half line, denoted by $(\IN,w,m).$
\end{enumerate} 
For a physical linear graph $(V,w,m),$ we usually omit the vertex weights $m$ and write $(V,w),$ and sometimes abbreviate $w_n:=w(n,n+1)$ for all $n\in V$ for simplicity.

 For a linear graph $G=(\N_0,w,m)$, we call $d_+(n):= w(n,n+1)/m(n)$ the \emph{out-degree} and $d_-(n):=w(n,n-1)/m(n)$ the \emph{in-degree}.
In terms of Markov processes, the out degree $d_+(n)$ equals the transition rate $Q(n,n+1)$ from $n$ to $n+1$, and similarly, $d_-(n)=Q(n,n-1)$. Moreover, the stationary distribution is given by the vertex measure $m$. Note that we do not require any normalization of the transition rates or of the stationary distribution. In particular, we allow $Q(n,n-1) + Q(n,n+1) \neq 1$ and $m(V)=\sum_x m(x) \neq 1$.

We denote 
by $\ell^p(V,m),$ $p\in [1,\infty],$ the $\ell^p$ spaces of functions on $V$ with respect to the measure $m,$ and by $\|\cdot\|_{\ell^p(V,m)}$ the $\ell^p$ norm of a function.

For a weighted graph $G=(V,E,w,m),$ we define the \emph{Laplacian} $\Delta$ as
$$\Delta f(x)=\frac{1}{m(x)}\sum_{y\in V:y\sim x}w(x,y)(f(y)-f(x)),\quad \forall\ f:V\to\R.$$

Now we introduce the $\Gamma$-calculus on graphs following \cite{BakryGentilLedoux,LinYau10}.
The ``carr\'e du champ" operator $\Gamma$ is defined as
$$\Gamma(f,g)(x):=\frac12(\Delta(fg)-f\Delta g-g\Delta f)(x), \quad \forall f,g:V\to\R, x\in V.$$ For simplicity, we write $\Gamma(f):=\Gamma(f,f).$ The iterated $\Gamma$ operator, $\Gamma_2$, is defined as
$$\Gamma_2(f):=\frac{1}{2}\Delta \Gamma(f)-\Gamma(f,\Delta f).$$



For our purposes, we need the curvature dimension condition on graphs, which was initiated in \cite{BakryEmery85,Bakry87} on manifolds (or Markov diffusion semigroups) and introduced on graphs in \cite{schmuckenschlager1998curvature,LinYau10}. For a weighted graph $G=(V,E,w,m)$ and a vertex $x\in V,$ we say that it satisfies the $\CCD(K,D,x)$ condition, for $K\in \R$ and $D\in (0,\infty],$ if
  $$\Gamma_2(f)(x)\geq \frac1D (\Delta f)^2(x)+K\Gamma (f)(x),\quad \forall\ f:V\to\R.$$ A weighted graph is called satisfying the $\CCD(K,D)$ condition if $\CCD(K,D,x)$ holds for all $x\in V.$ It was proved in \cite{LiuP14,LinLiu15,HuaLin17} that under several conditions,
  $\CCD(0,D)$ holds if and only if for any finitely supported function $f,$
  \begin{equation}\label{eq:gradient1}\Gamma(P_t f)\leq P_t (\Gamma (f)),\end{equation} where $P_t(\cdot)$ denotes the heat semigroup associated to the Laplacian $\Delta.$
For a given graph $G=(V,E,w,m)$ and $\mathcal N >0$ and $x \in V$, we write 
$$\mathcal K_{G,x}(\mathcal N) := \sup \{K \in \IR: \CCD(K,\mathcal N,x)\ \mathrm{holds}\}.$$

We are ready to state our main results.

\subsection{Main results}
We first prove the triviality of edge weights for linear graphs supported on $\IZ$ which satisfy non-negative curvature dimension conditions.

\begin{theorem}\label{thm:Liouville}
Let $(\IZ,w)$ be a physical linear graph satisfying $\CCD(0,\infty).$
Then there is a constant $c>0$ such that $w(n,n+1)=c$ for all $n\in \Z.$ \end{theorem}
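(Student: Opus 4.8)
The plan is to use that $\CCD(0,\infty)$ is equivalent to $\Gamma_2(f)(n)\ge 0$ for every $f$ and every $n$ (here $K=0$ kills the curvature term and $D=\infty$ kills the $\tfrac1D(\Delta f)^2$ term), and to turn this into an explicit constraint on consecutive weights. Write $w_k:=w(k,k+1)$; since the graph is physical, $m(x)\equiv 1$, and $\Gamma_2(f)(n)$ depends only on the restriction of $f$ to $\{n-2,\dots,n+2\}$. Normalizing $f(n)=0$, it is a quadratic form in the four values $f(n-2),f(n-1),f(n+1),f(n+2)$ whose coefficients are explicit polynomials in $w_{n-2},w_{n-1},w_n,w_{n+1}$. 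First I would compute this form. The outer values $f(n\pm 2)$ occur only through $\tfrac12 w_{n-2}w_{n-1}(f(n-2)-f(n-1))^2$ and $\tfrac12 w_nw_{n+1}(f(n+2)-f(n+1))^2$ together with two linear cross terms, so completing the square in $f(n-2)$ and in $f(n+2)$ eliminates them and reduces the requirement $\Gamma_2(f)(n)\ge 0$ for all $f$ to the positive semidefiniteness of a single symmetric $2\times 2$ matrix $M_n$ in the remaining variables $f(n-1),f(n+1)$.

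Next I would make $M_n\succeq 0$ explicit. With $\phi_k:=w_k-\tfrac12\big(w_{k+1}-2w_k+w_{k-1}\big)$ and $t_k:=\phi_k/w_k$, one computes that the diagonal entries of $M_n$ are $w_{n-1}\phi_{n-1}$ and $w_n\phi_n$, its off-diagonal entry is $w_{n-1}w_n$, and $\det M_n=w_{n-1}w_n\big(\phi_{n-1}\phi_n-w_{n-1}w_n\big)$. Hence $\CCD(0,\infty)$ is equivalent to
\begin{equation*}
t_k\ge 0 \quad\text{and}\quad t_kt_{k+1}\ge 1 \qquad(k\in\IZ).
\end{equation*}
The first inequality forces the ratios $b_k:=w_{k+1}/w_k$ into $[\tfrac14,4]$, so $h_k:=b_k-1\in[-\tfrac34,3]$ is bounded. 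The second, via the AM--GM inequality $t_k+t_{k+1}\ge 2\sqrt{t_kt_{k+1}}\ge 2$, yields the two-step concavity
\begin{equation*}
\frac{w_{k+1}-2w_k+w_{k-1}}{w_k}+\frac{w_{k+2}-2w_{k+1}+w_k}{w_{k+1}}\le 0\qquad(k\in\IZ).
\end{equation*}

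The decisive step is to upgrade this to constancy, and here the two-sidedness of $\IZ$ is essential. Rewriting the two-step concavity in terms of $h_k$ and using the elementary identity $\tfrac{h}{1+h}=h-\tfrac{h^2}{1+h}$ (valid for $h>-1$), I expect to arrive at
\begin{equation*}
h_{k-1}-h_{k+1}\ \ge\ \frac{h_{k-1}^2}{1+h_{k-1}}+\frac{h_k^2}{1+h_k}\ \ge\ 0 .
\end{equation*}
Summing over $k\in\{-N,\dots,N\}$, the left-hand side telescopes to $h_{-N-1}+h_{-N}-h_N-h_{N+1}$, which is bounded uniformly in $N$ because $(h_k)$ is bounded; therefore $\sum_{k}\tfrac{h_k^2}{1+h_k}<\infty$, so $h_k\to 0$ both as $k\to+\infty$ and as $k\to-\infty$. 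On the other hand the same inequality says $h_{k+1}\le h_{k-1}$, i.e. the even- and the odd-indexed subsequences of $(h_k)$ are each non-increasing. A bounded non-increasing sequence whose limit at $+\infty$ is $0$ is nonnegative throughout, while having limit $0$ at $-\infty$ it is nonpositive throughout; hence each subsequence vanishes identically. Thus $h_k\equiv 0$, i.e. $w_{k+1}=w_k$ for all $k$, which is the assertion with $c:=w_0$.

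Conceptually the only delicate point is this last paragraph. The curvature condition yields merely a parity-wise monotone control of the difference-ratios together with a summable defect; it is exactly the presence of two infinite ends of $\IZ$, forcing $h_k\to 0$ on both sides, that collapses the monotone subsequences to zero. On the half-line $\IN$ this mechanism breaks (there is only one end), which is consistent with the existence of non-constant weights there. The reduction in the first two paragraphs is routine once the $2\times 2$ form is in hand; computing $M_n$ is the most calculation-heavy part but presents no conceptual obstacle.
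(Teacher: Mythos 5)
Your proof is correct, and after a shared starting point it takes a genuinely different route from the paper's. The reduction in your first two paragraphs reproduces exactly the paper's combinatorial characterization: your matrix $M_n$ and the resulting conditions $t_k\geq 0$, $t_kt_{k+1}\geq 1$ coincide with Proposition~\ref{prop:phys} (derived there from Proposition~\ref{prop:equv}), namely $4w_{n-1}-(w_{n-2}+w_n)\geq 0$ and $\bigl(4w_{n-1}-w_{n-2}-w_n\bigr)\bigl(4w_n-w_{n-1}-w_{n+1}\bigr)\geq 4w_{n-1}w_n$; a minor slip in your sketch is that the coefficient of $(f(n-2)-f(n-1))^2$ in $\Gamma_2(f)(n)$ is $\tfrac14 w_{n-2}w_{n-1}$ rather than $\tfrac12 w_{n-2}w_{n-1}$, but this does not affect $M_n$, which you state correctly. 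From this common point the paper argues dynamically: Lemma~\ref{lem1} gives $w_{n+1}/w_n\leq \phi(w_{n-1}/w_{n-2})$ with $\phi(x)=\tfrac14\bigl(7-\tfrac{9}{4x-1}\bigr)$, monotonicity of $(w_n)$ on a half-line is proved by contradiction by iterating $\phi$ until a ratio is driven below $\tfrac14$ (Theorem~\ref{thm:onN}), and constancy on $\IZ$ then follows by reflecting the graph. You instead extract, via AM--GM, the multiplicative two-step concavity $t_k+t_{k+1}\geq 2$, rewrite it as $h_{k-1}-h_{k+1}\geq \tfrac{h_{k-1}^2}{1+h_{k-1}}+\tfrac{h_k^2}{1+h_k}$ (legitimate, since $t_k\geq 0$ pins $h_k\in[-\tfrac34,3]$, so $1+h_k\in[\tfrac14,4]$), telescope over $\IZ$ to get $\sum_k h_k^2/(1+h_k)<\infty$, conclude $h_k\to 0$ at both ends, and combine this with the parity-wise monotonicity $h_{k+2}\leq h_k$ to force $h\equiv 0$; every step checks out, including the final squeeze (a non-increasing sequence with limit $0$ at $+\infty$ is nonnegative, with limit $0$ at $-\infty$ nonpositive). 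What each approach buys: the paper's iteration yields the standalone monotonicity result on $\IN$ (Theorem~\ref{thm:onN}), which is reused later for the volume-growth corollaries on half-lines, whereas your telescoping argument proves constancy on $\IZ$ in one stroke, comes with a quantitative defect bound (the uniformly bounded series), and isolates transparently where the two ends of $\IZ$ enter --- playing the role the reflection trick plays in the paper, and consistent with the failure of the statement on $\IN$, where Theorem~\ref{thm:concaveweights} provides non-constant examples.
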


The same statement holds true for normalized linear graphs when assuming $\CCD(0,D)$ for a finite dimension $D$.

\begin{theorem}\label{thm:LiouvilleNormaized}
Let $(\IZ,w,m)$ be a normalized linear graph satisfying $\CCD(0,D)$ for some $D<\infty$.
 Then there is a constant $c>0$ such that $w(n,n+1)=c$ for all $n\in \Z.$ \end{theorem}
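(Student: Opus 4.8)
The plan is to turn the functional inequality $\CCD(0,D)$ into an explicit scalar condition on the edge weights and then to exploit the discrete analogue of a classical rigidity fact: on $\IR$ the condition $\mathrm{CD}(0,N)$ with $N<\infty$ forces a suitable power of the density to be concave, and a strictly positive concave function on the whole line must be constant. The finite dimension parameter $D$ is exactly what produces concavity here, which is why $\CCD(0,\infty)$ (the hypothesis of Theorem~\ref{thm:Liouville}) is not enough in the normalized setting; note also that, although $\CCD(0,D)$ does imply $\CCD(0,\infty)$ since $(\Delta f)^2(n)\ge 0$, we cannot simply reduce to the physical case because normalization changes the Laplacian.

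First I would set $w_n:=w(n,n+1)$ and record that normalization gives $m(n)=w_{n-1}+w_n$, so the transition rates $d_\pm(n)$ depend only on the two adjacent weights. Since $\Gamma_2(f)(n)$ and $(\Delta f(n))^2$ depend on $f$ only through the four increments $f(n-1)-f(n-2),\dots,f(n+2)-f(n+1)$, and on the weights only through $w_{n-2},w_{n-1},w_n,w_{n+1}$, the local condition $\CCD(0,D,n)$ is equivalent to the positive semidefiniteness of an explicit $4\times 4$ symmetric matrix $M_n$ assembled from these four weights. Establishing this combinatorial characterization and computing $M_n$ for the normalized Laplacian is the first task.

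The second step is to read off from the positivity of $M_n$ a one-step recursion. Concretely I expect that, after choosing the right variables, the condition $M_n\succeq 0$ for all $n$ is equivalent to a discrete concavity $a_{n+1}-2a_n+a_{n-1}\le 0$ for a suitable strictly positive sequence $a_n$ built from the weights (morally the measure, or an edge weight, raised to a power determined by $D$), with the finite-dimension term $\tfrac1D(\Delta f)^2$ being precisely what upgrades the bare bound $\Gamma_2(f)(n)\ge 0$ into this concavity. I would identify $a_n$ and the sign either by testing the quadratic form against well-chosen increment vectors or by evaluating an appropriate principal minor of $M_n$. Globalization then finishes the argument: a strictly positive sequence on $\IZ$ with non-increasing forward differences must have all its differences equal to $0$, since a nonzero difference would drive $a_n\to-\infty$ in one direction and contradict positivity. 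Hence $a_n$ is constant, which forces the weights to be constant (unwinding the normalization). It is essential here that the index set is the two-sided line $\IZ$: on $\IN$ the same concavity would only yield monotonicity, not constancy, consistent with the theorem being stated for $\IZ$.

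I expect the main obstacle to be the explicit $\Gamma_2$-computation for the normalized Laplacian together with isolating the clean concavity statement from the $4\times 4$ positivity condition, in particular pinning down the correct sequence $a_n$ and power so that the dimension term appears exactly as the concave second difference. A secondary difficulty is the final unwinding: if the concave quantity only pins down $m$ up to the relation $w_{n-1}=w_{n+1}$ (a possibly $2$-periodic weight pattern), the non-constant periodic case would have to be excluded by substituting it back into the combinatorial inequality, where finiteness of $D$ should again force equality and hence genuine constancy of all edge weights.
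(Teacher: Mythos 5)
Your global skeleton is sound and, at the endgame, coincides in spirit with the paper's: both arguments hinge on the two-sidedness of $\IZ$ (you via the rigidity that a positive sequence on $\IZ$ with non-increasing forward differences must be constant, the paper via reflecting the graph and shifting the origin), and your worry about the $2$-periodic loose end is legitimate and well spotted --- the paper sidesteps it by proving the stronger conclusion $p \equiv 0$, i.e.\ $d_+ \equiv \tfrac12$, rather than merely constancy of $m$. Your Step 1 is also fine: the paper's Proposition~\ref{prop:equv} is exactly such a combinatorial characterization (reduced to a $2\times 2$ form in the two inner increments after optimizing out the outer ones, rather than your $4\times 4$), and Theorem~\ref{thm:CDcharNormalized} specializes it to normalized graphs.

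The genuine gap is your Step 2, which is precisely where you write ``I expect'' rather than prove. What $\CCD(0,D,n)$ actually gives on a normalized linear graph is not a linear second-difference inequality for a power of the measure but a nonlinear Riccati-type one-step recursion in the growth rate $p = d_+ - \tfrac12$: with $a=p(n-1)$, $b=p(n)$ and $K=0$ it reads $2p(n+1) \le \bigl(\tfrac 1 2 + b\bigr)\bigl(1-\tfrac 2 D\bigr)\, 2a \big/ \bigl[\bigl(1-\tfrac 2 D\bigr)\bigl(\tfrac 1 2 - b\bigr) + 2a\bigr]$, a rational inequality in three consecutive values of $p$ that does not visibly linearize into $a_{n+1}-2a_n+a_{n-1}\le 0$ for $a_n = m(n)^{\theta}$ with any fixed $\theta$. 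The continuum heuristic you lean on also misleads about the exponent: on the weighted line, $\mathrm{CD}(0,N)$ is equivalent to concavity of $m^{1/(N-1)}$, but the sharp discrete growth under $\CCD(0,D)$ is $m(n) \lesssim (n+1)^{D-2}$ with optimal exponent $D-2$, not $D-1$ (Theorem~\ref{thm: measure growth}) --- already a sign that the discrete condition is not an exact transplant of the concavity statement, so identifying your $a_n$ is not a routine computation but the hard analytic content of the theorem. The paper replaces your conjectured concavity lemma by a comparison argument: monotonicity of the recursion function $\Gc$ (Lemma~\ref{lem:MonotoneG}) permits an induction against the explicit model space with $p_D(n)=\frac{D-2}{2D+4(n-1)}$ (Theorems~\ref{thm: model space D const}, \ref{thm:comparison} and \ref{thm:pnExplicitStrong}); since on $\IZ$ the origin can be pushed arbitrarily far to the left and $p_D(n)\to 0$, one gets $p\le 0$, reflection gives $p\ge 0$, hence $p\equiv 0$ and constant weights. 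Until you exhibit a concrete $a_n$ together with a derivation of its concavity from the matrix positivity, your proposal is a plausible program rather than a proof, and the evidence above suggests the clean form you posit is not available.
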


By these results, the heat semigroup of the  birth-death process on $\Z$ with physical or normalized vertex measure satisfies the gradient bound \eqref{eq:gradient1} if and only if the edge weights are constant. Both above theorems fail for linear graphs supported on $\IN$ instead of $\IZ$. 

By mimicking the definitions in \cite{ollivier2009ricci} and \cite{LinLuYau11}, the second author and Wojciechowski \cite{MuenchWojciechowski17} introduced a generalized Ollivier curvature on the set of edges of a graph, denoted by $\kappa(x,y)$ for each edge $\{x,y\}$, see \cite{MuenchWojciechowski17} and Section~\ref{s:pre} below for details.
We prove that Bakry-\'Emery curvature can be lower bounded by Ollivier curvature if the vertex measure is log-concave.
\begin{theorem}\label{thm:Ollivier}
Let $G=(V,w,m)$ be a linear graph and $n \in V$.
Suppose, the vertex measure is log-concave around $n$, i.e.,
\[
m(n-1)m(n+1) \leq m(n)^2.
\]
Then, $\CCD(K,\infty,n)$ holds with
\[
K= \frac {\kappa(n,n+1) \wedge \kappa(n,n-1)}2.
\]
\end{theorem}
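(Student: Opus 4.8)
The plan is to turn the curvature condition $\CCD(K,\infty,n)$, which (with $D=\infty$) reads $\Gamma_2(f)(n)\ge K\,\Gamma(f)(n)$ for all $f$, into an elementary semidefiniteness statement for a small quadratic form, and then to read the two edge curvatures and the log-concavity hypothesis off its entries. Write $w_k:=w(k,k+1)$ and $m_k:=m(k)$. On a linear graph $\Gamma(f)(n)$ depends only on the edge gradients $a:=f(n)-f(n-1)$ and $b:=f(n+1)-f(n)$, while $\Gamma_2(f)(n)$ additionally involves $s:=f(n-1)-f(n-2)$ and $t:=f(n+2)-f(n+1)$ through $\Delta\Gamma(f)(n)$, so the condition at $n$ is the nonnegativity of a quadratic form $Q(s,a,b,t)=\Gamma_2(f)(n)-K\,\Gamma(f)(n)$ in four real variables. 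First I would expand $\Gamma_2(f)(n)=\tfrac12\Delta\Gamma(f)(n)-\Gamma(f,\Delta f)(n)$ using the explicit one-dimensional Laplacian $\Delta f(k)=\tfrac1{m_k}\bigl(w_k(f(k+1)-f(k))+w_{k-1}(f(k-1)-f(k))\bigr)$.

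The key structural observation is that the two outer gradients $s,t$ enter $Q$ only through $\tfrac{w_{n-2}w_{n-1}}{4m_{n-1}m_n}(s-a)^2$ and $\tfrac{w_nw_{n+1}}{4m_nm_{n+1}}(t-b)^2$, i.e. through nonnegative perfect squares with positive coefficients and no further cross terms. Hence the infimum of $Q$ over $s,t$ is attained at $s=a$, $t=b$, and $\CCD(K,\infty,n)$ is equivalent to the nonnegativity of the reduced $2\times2$ form $R(a,b):=Q(a,a,b,b)$. I would then compute its three entries: a short calculation gives the off-diagonal coefficient $-\tfrac{w_{n-1}w_n}{m_n^2}$ (so $\beta=-\tfrac{w_{n-1}w_n}{2m_n^2}$), while the diagonal $b^2$- and $a^2$-coefficients $\gamma,\alpha$, after subtracting $K\,\Gamma(f)(n)$, are exactly the quantities governed by the two edge curvatures.

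Here I would invoke the explicit value of the generalized Ollivier curvature on a linear graph, $\kappa(n,n+1)=\tfrac{w_n-w_{n-1}}{m_n}+\tfrac{w_n-w_{n+1}}{m_{n+1}}$ (and symmetrically for $\kappa(n,n-1)$), which follows from the dual formula for $\kappa$ in \cite{MuenchWojciechowski17}, the relevant extremizing $1$-Lipschitz function on a path being the linear one. With $K=\tfrac12\bigl(\kappa(n,n+1)\wedge\kappa(n,n-1)\bigr)$ the diagonal entries satisfy $\gamma\ge\tfrac{w_n^2}{2m_nm_{n+1}}$ and $\alpha\ge\tfrac{w_{n-1}^2}{2m_{n-1}m_n}$, so both are nonnegative; moreover the chosen $K$ realizes the smaller of the two curvature bounds with equality, which is what lets a single $K$ serve both diagonals at once. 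Finally the determinant condition $\alpha\gamma\ge\beta^2$ for positive semidefiniteness reduces, after cancelling common factors, precisely to $\tfrac1{m_{n-1}m_{n+1}}\ge\tfrac1{m_n^2}$, i.e. to the log-concavity hypothesis $m(n-1)m(n+1)\le m(n)^2$.

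I expect the main obstacle to be twofold. First, the bookkeeping in expanding $\Gamma_2(f)(n)$ and organizing it so that the $(s-a)^2$ and $(t-b)^2$ structure becomes visible is delicate; getting the signs of the perfect-square coefficients right is essential, since it is their positivity that legitimizes the reduction $s=a$, $t=b$. Second, and more conceptually, is the coupling between the three semidefiniteness conditions: one must check that the single constant $K=\tfrac12(\kappa(n,n+1)\wedge\kappa(n,n-1))$ keeps both diagonal entries nonnegative \emph{and} leaves exactly enough slack for the determinant inequality to collapse to log-concavity rather than to a strictly stronger condition. Verifying that these fit together with no loss --- so that log-concavity is precisely what the determinant inequality demands --- is the heart of the argument.
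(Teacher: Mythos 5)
Your proposal is correct and takes essentially the same route as the paper: your reduced $2\times 2$ form $R(a,b)$ is precisely the characterization the paper imports as Proposition~\ref{prop:equv} (your $\alpha$, $\gamma$, $\beta$ are, up to normalization, $d_-(n)W_-(n)$, $d_+(n)W_+(n)$ and $d_-(n)d_+(n)$), and your diagonal bounds together with the determinant-to-log-concavity step reproduce the paper's computation $2W_\mp(n)=\kappa(n,n\mp 1)+2d_\pm(n\mp 1)-2K\geq 2d_\pm(n\mp 1)$ followed by $W_-(n)W_+(n)\geq \frac{w(n,n-1)}{m(n-1)}\cdot\frac{w(n,n+1)}{m(n+1)}\geq d_-(n)d_+(n)$. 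The only difference is that you re-derive the quadratic-form reduction (the $(s-a)^2$, $(t-b)^2$ perfect-square structure) from scratch, whereas the paper cites it from \cite{HuaLin16}, and you use the Ollivier curvature formula in the equivalent form $\kappa(n,n+1)=d_-(n+1)-d_+(n+1)-d_-(n)+d_+(n)$ taken from \cite{MuenchWojciechowski17}.
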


Using this, we construct examples of physical linear graphs with non-trivial edge weights supported on $\IN.$
 
\begin{theorem}\label{thm:concaveweights}
For any positive concave function $f:[0,\infty) \to \R,$ there is $N\in\N$ and a physical linear graph $(\N_0,w)$ satisfying $\CCD(0,\infty)$ such that $w(n,n+1)=f(n+N)$ for all $n\in\IN.$
\end{theorem}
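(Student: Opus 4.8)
The plan is to realize the desired graph through the Ollivier bound of Theorem~\ref{thm:Ollivier}. Because a physical graph has $m\equiv 1$, the log-concavity hypothesis $m(n-1)m(n+1)\le m(n)^2$ holds with equality at every interior vertex, so Theorem~\ref{thm:Ollivier} applies with no further work and reduces the problem to producing weights whose Ollivier curvature is non-negative on every edge: if $\kappa(n,n+1)\ge 0$ for all edges, then $\CCD(K,\infty,n)$ holds with $K=\tfrac12\big(\kappa(n,n+1)\wedge\kappa(n,n-1)\big)\ge 0$, and since $\CCD(K,\infty,n)$ with $K\ge 0$ trivially forces $\CCD(0,\infty,n)$ (as $\Gamma(f)\ge 0$), the whole graph satisfies $\CCD(0,\infty)$. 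Evaluating the variational definition of $\kappa$ on a physical linear graph — the extremal $1$-Lipschitz function being the linear ramp $\phi(k)=k$ — gives, for an interior edge,
\[
\kappa(n,n+1)=2w_n-w_{n-1}-w_{n+1},
\]
which is exactly the negative discrete second difference of the weight sequence; thus $\kappa(n,n+1)\ge 0$ is precisely a discrete concavity condition on $(w_n)$.

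With this in hand I set $w_n:=f(n+N)$. For every interior edge, concavity of $f$ gives $2f(n+N)-f(n-1+N)-f(n+1+N)\ge 0$, i.e.\ $\kappa(n,n+1)\ge 0$, and this holds for all interior edges independently of $N$. Combined with the equality case of log-concavity above, Theorem~\ref{thm:Ollivier} already delivers $\CCD(0,\infty,n)$ at every interior vertex $n\ge 1$.

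The only genuine difficulty, and the reason the shift $N$ is needed, is the boundary vertex $0$. There Theorem~\ref{thm:Ollivier} does not apply verbatim (the term $m(-1)$ is absent), and the relevant curvature is $\kappa(0,1)=2w_0-w_1=2f(N)-f(N+1)$, so I must arrange $f(N+1)\le 2f(N)$. The key observation is that a \emph{positive} concave function on $[0,\infty)$ is necessarily non-decreasing — a negative one-sided slope would persist by concavity and drag $f$ to $-\infty$ — so $f(N)\ge f(0)>0$, while the increments $f(N+1)-f(N)$ are non-increasing and bounded, hence convergent to a finite limit. Therefore $f(N+1)-f(N)\le f(N)$ for all sufficiently large $N$; fixing such an $N$ makes $\kappa(0,1)\ge 0$.

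It remains to certify the two boundary vertices. For the leaf $0$ I would verify $\CCD(0,\infty,0)$ directly: requiring $\Gamma_2(f)(0)\ge 0$ for all $f$ is a finite quadratic-form condition in the three values $f(0),f(1),f(2)$, which amounts to a bound of the form $w_1\le c\,w_0$ and is secured by the same largeness of $N$ that gives $\kappa(0,1)\ge 0$. The vertex $1$ is then covered by Theorem~\ref{thm:Ollivier}, whose hypotheses hold ($m(0)m(2)\le m(1)^2$ with equality) and whose curvature inputs $\kappa(1,2)\ge 0$ (concavity) and $\kappa(1,0)=\kappa(0,1)\ge 0$ are both non-negative. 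The main obstacle throughout is thus isolating and handling the endpoint; the interior is immediate from concavity, consistent with Theorem~\ref{thm:Liouville}, where on the two-sided line $\IZ$ no non-constant positive concave weight sequence can survive, forcing triviality.
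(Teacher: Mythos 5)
Your proof is correct and follows essentially the paper's own route: non-negative Ollivier curvature from concavity at interior vertices via Theorem~\ref{thm:Ollivier} (the paper's Proposition~\ref{prop:new}), a shift $N$ taming the first ratio (your $f(N+1)\le 2f(N)$ for large $N$; the paper settles for $f(N+1)\le 4f(N)$ at some $N$ and packages the endpoint by a concave extension with $w_{-1}=0$), and a direct quadratic-form check at the endpoint $0$ via Proposition~\ref{prop:equv}. The only loose end is that you assert the endpoint condition has the form $w_1\le c\,w_0$ without computing $c$; Proposition~\ref{prop:equv} at $n=0$ (with $X=0$, $D=\infty$, $K=0$) gives exactly $w_1\le 4w_0$, so your arranged bound $w_1\le 2w_0$ does secure it, as you claimed.
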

Moreover, we estimate the growth of edge weights for physical linear graphs supported on $\IN$ satisfying curvature dimension conditions,
\begin{theorem}\label{thm:weightgrowth}
	Let $G=(\IN,w)$ be a physical linear graph.
	Then, we have the following:
	\begin{enumerate}
		\item $\CCD(0,\infty)$ implies $w(n,n+1) \in \mathcal O(n),$ as $n\to\infty.$
		\item $\CCD(K,\infty),$ for some $K<0,$ implies $w(n,n+1) \in \mathcal O(n^2)$, as $n\to\infty.$
	\end{enumerate}
\end{theorem}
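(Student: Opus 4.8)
The plan is to reduce $\CCD(K,\infty,n)$ to an explicit system of scalar inequalities on the edge weights, and then to extract from these a discrete convexity/growth estimate. Since the graph is physical, $\Delta f(n)=w_n(f(n+1)-f(n))+w_{n-1}(f(n-1)-f(n))$, and a direct computation shows that $\Gamma_2(f)(n)-K\Gamma(f)(n)$ is a quadratic form in the four differences $f(n+1)-f(n)$, $f(n-1)-f(n)$, $f(n+2)-f(n)$, $f(n-2)-f(n)$. The outer values $f(n\pm2)$ enter only through two independent squares with positive coefficients, so minimizing over them is explicit and reduces $\CCD(K,\infty,n)$, at every interior vertex $n$, to the positive semidefiniteness of a $2\times2$ matrix in the remaining variables $f(n\pm1)-f(n)$. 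Writing $P_n:=4w_n-w_{n+1}-w_{n-1}-2K$, this is equivalent (by the combinatorial characterization of $\CCD$ for linear graphs, or by this computation) to the two inequalities $P_n\ge 0$ and $P_nP_{n-1}\ge 4\,w_nw_{n-1}$, valid for all sufficiently large $n$; the boundary vertices $n=0,1$ produce separate conditions that are irrelevant to the asymptotics.

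The heart of the argument is a monotonicity hidden in the second inequality. Setting $e_n:=P_n/w_n$, the bound $P_nP_{n-1}\ge 4w_nw_{n-1}$ forces $P_n>0$ and, chained with the same bound at $n-1$, gives $P_n\ge w_nP_{n-2}/w_{n-2}$, that is
\[
e_n\ge e_{n-2}.
\]
Thus $e_n$ is non-decreasing along each parity class. Since $e_n=4-(w_{n+1}+w_{n-1})/w_n-2K/w_n$ is bounded above once $w_n\to\infty$ (and if $(w_n)$ stays bounded there is nothing to prove), both subsequences converge, say $e_{2k}\uparrow\beta$ and $e_{2k+1}\uparrow\gamma$ with $\beta\gamma\ge 4$. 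I would then record the elementary optimization fact that, subject to $\beta\gamma\ge4$ and $\beta,\gamma\in(0,4]$, one has $(4-\beta)(4-\gamma)\le 4$, with equality only at $\beta=\gamma=2$.

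To conclude, rewrite the defining relation as the recursion $w_{n+1}+w_{n-1}=(4-e_n)w_n-2K$, i.e. $\Delta^2 w_n=(2-e_n)w_n-2K$ with $\Delta^2 w_n:=w_{n+1}-2w_n+w_{n-1}$ and $-2K\ge0$. For $K=0$ the recursion is homogeneous; along the convergent coefficients $c_n=4-e_n$ its associated (asymptotically $2$-periodic) transfer matrix has determinant $1$ and trace $(4-\beta)(4-\gamma)-2\in[-2,2]$, hence spectral radius $1$, so any positive solution grows at most linearly and $w_n\in\mathcal O(n)$. In the generic regime $\beta,\gamma>2$ this is seen directly: then $e_n>2$ eventually, so $\Delta^2 w_n<0$, the first differences $w_{n+1}-w_n$ are eventually non-increasing and hence bounded above, giving $w_n\in\mathcal O(n)$. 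For $K<0$ the same coefficient analysis applies, but the recursion now carries the constant forcing $-2K>0$; against a critical (spectral radius $1$) homogeneous part this produces a particular solution of quadratic size, and in the regime $\beta,\gamma>2$ one checks concretely that $\Delta^2 w_n\le -2K$ eventually, whence $w_{n+1}-w_n\in\mathcal O(n)$ and $w_n\in\mathcal O(n^2)$.

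The step I expect to be the main obstacle is the passage from the asymptotic periodic-coefficient growth rate to a genuine bound on the non-autonomous sequence $(w_n)$ in the borderline cases $\beta\le 2$, where $w$ is not eventually concave: there one must combine Poincar\'e--Perron-type asymptotics of the recursion with the positivity of the weights to exclude the oscillatory and exponentially growing solutions that the inequalities alone would permit, and, for $K<0$, to account cleanly for the constant forcing. Dispatching the non-monotone or bounded regimes of $(w_n)$ and the boundary vertices $n=0,1$ is routine but must be handled separately.
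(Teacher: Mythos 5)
Your setup is sound and, up to the point where you pass to limits, parallels the paper: the reduction of $\CCD(K,\infty,n)$ to $P_{n-1}\geq 0$ and $P_{n-1}P_n\geq 4w_{n-1}w_n$ is exactly Proposition~\ref{prop:phys}, the chained monotonicity $e_n\geq e_{n-2}$ is correct, and so is the optimization fact that $(4-x)(4-y)\leq 4$ whenever $xy\geq 4$ and $x,y\in(0,4]$. The genuine gap is the concluding step, and it is not a deferrable technicality: once you replace the pointwise inequalities by their limits $\beta,\gamma$, the remaining data (positivity, the recursion, monotone $e_n$, $\beta\gamma\geq 4$, a limit transfer matrix of spectral radius $1$) cannot yield $w_n\in\mathcal O(n)$. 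Monotone convergence of $e_n$ carries no rate, and Poincar\'e--Perron/Levinson-type theorems need quantified (e.g.\ summable) perturbations; without a rate they give at best subexponential control, far weaker than $\mathcal O(n)$. Concretely, the positive sequence $w_n=e^{n^{3/4}}$ has $e_n=2-\tfrac{9}{16}n^{-1/2}(1+o(1))$ increasing to $2$ along both parities, so $\beta=\gamma=2$ and $\beta\gamma\geq4$, yet it grows faster than any polynomial: your asymptotic hypotheses cannot distinguish it from an admissible weight sequence, so the borderline regime $\min(\beta,\gamma)\leq 2$ that you yourself flag as ``the main obstacle'' is precisely where the theorem lives (equality in the CD inequality corresponds to $e\equiv 2$, i.e.\ affine weights). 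Case (2) has further unjustified steps: for $K<0$ the weights need not be monotone, so ``bounded or $w_n\to\infty$'' is a false dichotomy, $e_n$ need not be bounded above (the term $-2K/w_n$ is uncontrolled if $\inf_n w_n=0$), and $e_n\leq 4$ can fail, so the optimization fact does not even apply to the limits.

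The repair is to use your own two facts pointwise rather than asymptotically --- note that what the example above violates is exactly the pointwise inequality $e_{n-1}e_n\geq 4$. For $K=0$, writing $w_{n\pm2}=(4-e_{n\pm1})w_{n\pm1}-w_n$, the monotonicity $e_{n+1}\geq e_{n-1}$ followed by $e_{n-1}e_n\geq 4$ gives
\begin{align*}
w_{n+2}+w_{n-2}-2w_n \leq (4-e_{n-1})\left(w_{n+1}+w_{n-1}\right)-4w_n=\left[(4-e_{n-1})(4-e_n)-4\right]w_n\leq 0,
\end{align*}
so both parity subsequences are concave, and double summation yields $\mathcal O(n)$ with no limits, no dichotomy and no transfer matrices. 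This is exactly the paper's route: Theorem~\ref{thm: discrete second derivative and curvature} proves $w_{n-2}-2w_n+w_{n+2}\leq -6K$ for all $K\leq 0$ by solving the two CD inequalities at $n$ and $n+1$ for the outer weights, adding, and completing a square --- the algebraic identity there is what handles $K<0$, where the crude bound above breaks because $e_n\leq 4$ may fail --- and Theorem~\ref{thm:weightgrowth} then follows by summing this discrete concavity estimate twice.
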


As a corollary, for physical linear graphs we obtain volume growth properties w.r.t. the intrinsic metric introduced by 
\cite{Huang12,Folz14} under the curvature condition, 
i.e. the polynomial volume growth for linear graphs with non-negative curvature and at most exponential volume growth for those with curvature bounded below by a negative constant, see Corollary~\ref{coro:volumeupper}. Refined volume growth estimates for physical linear graphs with non-negative curvature can be found in Corollary~\ref{coro:refinedest}. Moreover, we prove that the simple random walk on a non-negatively curved physical linear graph is recurrent, see Corollary~\ref{coro:recc}.

By \cite[Theorem~5]{KellerLenz12}, a physical linear graphs is stochastically complete if and only if the following holds,
\begin{align*}
\sum_n \frac n {w(n,n+1)} = \infty.
\end{align*}

Using this criterion, we may prove the stochastic completeness of a physical linear graph whose curvature decays not too fast. This is analogous to the case in the Riemannian setting, see e.g. \cite[Theorem~15.4]{Grigoryan99}. 

\begin{theorem}\label{thm:scwithdecay}

	Let $G=(\IN,w)$ be a physical linear graph and $\rho$ be an intrinsic metric on $G$. We write $K(n)=\mathcal K_{G,n}(\mathcal \infty)$ for any $n\in\IN.$

	Suppose that
	\begin{align*}
	K(n)_- \in \mathcal O(\rho(n,0)^2),
	\end{align*} then $G$ is stochastically complete.
\end{theorem}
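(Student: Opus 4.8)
The plan is to reduce the statement to the Keller--Lenz criterion recalled just above, namely that $G=(\IN,w)$ is stochastically complete if and only if $\sum_n n/w_n=\infty$ where $w_n:=w(n,n+1)$, and then to squeeze out of the curvature lower bound a growth estimate on the weights that forces this series to diverge. This realises the ``local growth rate determined by the curvature'' advertised in the introduction and avoids the heat equation entirely.

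The engine is an exact curvature formula. Since $m\equiv1$, one writes $\Gamma_2(f)(n)$ as an explicit quadratic form in $f(n-2),\dots,f(n+2)$ (normalising $f(n)=0$); optimising over the two free values $f(n\pm2)$ reduces $K(n)=\mathcal K_{G,n}(\infty)$ to the least generalised eigenvalue of a $2\times2$ problem, giving
\[
K(n)=\alpha_n+\beta_n-\sqrt{(\alpha_n-\beta_n)^2+w_{n-1}w_n},\qquad \alpha_n=\tfrac{w_{n-1}}2-\tfrac14\sigma_{n-1},\ \ \beta_n=\tfrac{w_n}2-\tfrac14\sigma_n,
\]
with $\sigma_n:=w_{n+1}-2w_n+w_{n-1}$. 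The point is that $-K(n)$ controls the discrete convexity of $\psi_n:=\sqrt{w_n}$: after simplification $-K(n)=\bigl(w_{n-1}\sigma_n+w_n\sigma_{n-1}-\tfrac12\sigma_{n-1}\sigma_n\bigr)/\bigl(2(\sqrt{(\alpha_n-\beta_n)^2+w_{n-1}w_n}+\alpha_n+\beta_n)\bigr)$, so $-K(n)$ plays exactly the role of the Jacobi curvature $-\psi''/\psi$ of the weight. I then feed in the geometry: for any intrinsic metric one has $w_n\rho(n,n+1)^2\le m(n)=1$, hence $\rho(n,n+1)\le w_n^{-1/2}$ and $R_n:=\rho(n,0)\le\sum_{k<n}w_k^{-1/2}$, which together with the hypothesis gives $K(n)_-\le C R_n^2$.

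Next I would pass to the intrinsic (arc-length) variable $r=R_n$ and $\psi=\sqrt w$, for which $dn=\psi\,dr$; the inequality above becomes the closed comparison $\psi''\le C' r^2\psi$. Setting $v:=(\log\psi)'$ and using $\psi''/\psi=v'+v^2$, the curvature bound reads $v'\le C'r^2-v^2$, and the good sign of the $-v^2$ term forces $v(r)=\mathcal O(r)$. Consequently $\psi$ varies slowly on the scale $1/r$, so with $F(r):=\int_0^r\psi=n(r)$ one obtains $F(r)/\psi(r)\gtrsim 1/r$, whence
\[
\sum_n\frac{n}{w_n}\ \asymp\ \int^\infty\frac{F(r)}{\psi(r)}\,dr\ \gtrsim\ \int^\infty\frac{dr}{r}\ =\ \infty .
\]
The borderline of finite total length, $\sum_k w_k^{-1/2}<\infty$, is handled separately: then $R_n$ is bounded, $K(n)_-\le CR_\infty^2$ is a constant, $G$ satisfies $\CCD(-C'',\infty)$, and Theorem~\ref{thm:weightgrowth}(2) yields $w_n\in\mathcal O(n^2)$, contradicting $\sum_k w_k^{-1/2}<\infty$.

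The hard part will be turning this continuous heuristic into rigorous discrete estimates. The two delicate points are, first, extracting from the exact formula a clean discrete inequality with the \emph{correct} (good) sign of the Riccati term: the first-order correction $(w_{n-1}-w_n)^2/(w_{n-1}+w_n)$ and the cross term $\sigma_{n-1}\sigma_n$ must be absorbed, and the regime $\alpha_n+\beta_n\le0$ treated directly, since a naive test function already produces the wrong inequality $v'\le C'r^2+v^2$ that permits finite-$r$ blow-up. Second, the comparison must be run as a genuine discrete Gronwall argument in which $R_n$ depends on all earlier weights while each $w_k$ is in turn constrained through $R_k$; closing this feedback loop requires a bootstrap, and the resulting estimate is tight precisely at the critical rate $w_n\sim n^2\log n$, where $\sum 1/(n\log n)$ still diverges.
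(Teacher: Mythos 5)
Your reduction to the Keller--Lenz criterion $\sum_n n/w_n=\infty$ is the right starting point (and matches the paper), and your side case $\sum_k w_k^{-1/2}<\infty$ is handled correctly --- indeed, there $K(n)_-$ is bounded, Theorem~\ref{thm:weightgrowth}(2) gives $w_n\in\mathcal O(n^2)$, and $\sum_n n/w_n=\infty$ follows directly without any contradiction argument. But the core of your proposal is not a proof. The only place the curvature hypothesis enters is the Riccati comparison $v'\le C'r^2-v^2$ in the intrinsic variable, and that is precisely the step you concede is unestablished: by your own account the naive discrete computation produces the wrong-signed inequality $v'\le C'r^2+v^2$, whose solutions may blow up in finite $r$ and force nothing, and the feedback loop --- $R_n$ depends on all earlier weights $w_k$ while each $w_k$ is supposed to be controlled through $R_k$ --- is acknowledged to ``require a bootstrap'' that you never carry out. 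So the argument has a genuine gap exactly where curvature must be converted into a growth bound on the weights; everything before and after that point is a continuum heuristic hanging on an unproved discrete estimate.

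The missing idea, which is how the paper closes this, is a dichotomy that dissolves your feedback loop entirely: split according to whether $\rho(n,0)^2\in\mathcal O(\log n)$. If not, curvature is irrelevant --- Cauchy--Schwarz applied to $\rho(0,n)\le\sum_{k<n}w_k^{-1/2}$ gives $\rho(0,n)^2\le C+\log(n)\sum_{k=1}^{n-1}k/w_k$, and unboundedness of $\rho(n,0)^2/\log n$ forces $\sum_k k/w_k=\infty$. If yes, the hypothesis yields $K(n)_-\in\mathcal O(\log n)$ as a function of $n$ itself, so no bootstrap is needed: Theorem~\ref{thm: discrete second derivative and curvature} (the discrete second-derivative bound $w_{n-2}-2w_n+w_{n+2}\le-6K$, which is the rigorous one-sided substitute for your $\psi''/\psi$ control, obtained directly from Proposition~\ref{prop:phys} rather than from an exact eigenvalue formula) gives $(w_{n-2}-2w_n+w_{n+2})_+\in\mathcal O(\log n)$, and two summations give $w_n\in\mathcal O(n^2\log n)$, whence $\sum_n n/w_n\gtrsim\sum_n 1/(n\log n)=\infty$. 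Notably this is the same critical rate $w_n\sim n^2\log n$ you identified as the borderline, so your endpoint intuition is sound; but the route through the $2\times2$ eigenvalue formula and a Riccati/Gronwall bootstrap is both unclosed and unnecessary, and the case distinction replaces it with two elementary estimates.
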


Now we turn to normalized linear graphs with non-negative curvature. By introducing the model spaces, we are able to prove a discrete analogue to Bishop-Gromov volume comparison theorem for linear graphs.

\begin{defn}\label{def:model}
	A normalized linear graph $G_0 = (\IN,w_0,m_0)$  is called a \emph{model space} 
	with respect to a \emph{dimension function} $\mathcal N : \IN \to [2,\infty]$ if 
	 for all $x\in \IN$, one has
	$$
	\mathcal K_{G_0,x}(\mathcal N(x)) \leq 0. 
	$$
\end{defn}


\begin{theorem}\label{thm:comparison}
	Let $G_0 = (\IN,w_0,m_0)$ be a model space with respect to a dimension function $\mathcal N : \IN \to [2,\infty]$. Let $G=(V,w,m)$ be a linear normalized graph with $V=\IZ \cap I$  for an interval $I$ and with $\inf V \leq 0$. 
	Suppose for all $x \in \IN \cap V$, one has 
	$$
	\mathcal K_{G_0,x}(\mathcal N(x)) \leq \mathcal K_{G,x}(\mathcal N(x)).
	$$
	Then  we have $$d_+^{G_0}(x) \geq d_+^{G}(x)$$ for all $x \in \IN \cap V$ and
	\begin{align}
	\frac {m_0(y)}{m_0(x)} \geq \frac {m(y)}{m(x)},
	\end{align} for any $x,y\in\IN \cap V$ such that $y>x$.
\end{theorem}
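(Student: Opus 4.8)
The plan is to reduce the measure estimate to the out-degree estimate, and then to prove the out-degree estimate by an induction on the vertex that rests on a monotonicity property of the curvature functional.

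\emph{Reduction of the measure estimate to the out-degree estimate.} For a normalized linear graph the detailed balance $w(n,n+1)=w(n+1,n)$ reads $m(n)d_+(n)=m(n+1)d_-(n+1)$, so that
\begin{equation*}
\frac{m(n+1)}{m(n)}=\frac{d_+(n)}{d_-(n+1)}=\frac{d_+(n)}{1-d_+(n+1)}.
\end{equation*}
Telescoping gives $\frac{m(y)}{m(x)}=\prod_{n=x}^{y-1}\frac{d_+(n)}{1-d_+(n+1)}$ for $y>x$, and each factor is non-decreasing in $d_+(n)$ and (since $1-d_+(n+1)=d_-(n+1)>0$) non-decreasing in $d_+(n+1)$. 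Hence once the first conclusion $d_+^{G_0}(n)\ge d_+^{G}(n)$ is established for all $n\in\IN\cap V$, the factor for $G_0$ dominates the factor for $G$ at every index, and the product inequality $\frac{m_0(y)}{m_0(x)}\ge\frac{m(y)}{m(x)}$ follows. So it suffices to prove $d_+^{G_0}(x)\ge d_+^{G}(x)$ for all $x\in\IN\cap V$.

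\emph{The curvature as a local functional.} By the combinatorial characterization of $\CCD(K,\mathcal N,x)$ for linear graphs, the inequality defining $\Gamma_2(f)(x)$ only involves the values of $f$ on $\{x-2,\dots,x+2\}$; the second-neighbour values $f(x-2),f(x+2)$ enter the defining quadratic form in a way that is optimized by extending $f$ affinely (so that $f(x+2)-f(x+1)=f(x+1)-f(x)$, and symmetrically on the left), which reduces the computation to a two-dimensional generalized eigenvalue problem in the increments $f(x-1)-f(x)$ and $f(x+1)-f(x)$. Since $d_-=1-d_+$ for a normalized graph, the resulting value is an explicit function
\begin{equation*}
\mathcal K_{G,x}(\mathcal N)=\Psi\bigl(d_+(x-1),d_+(x),d_+(x+1);\mathcal N\bigr),
\end{equation*}
the \emph{same} $\Psi$ for every normalized linear graph. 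The key claim to be extracted from the closed form of $\Psi$ is that $\Psi$ is non-decreasing in its first two arguments and strictly decreasing in its third; moreover, when the second argument equals $1$ (so $d_-(x)=0$ and $x$ behaves as a boundary vertex) the first argument drops out, matching the left boundary vertex $0$ of the model space $G_0$.

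\emph{Induction.} Since $\inf V\le 0$ and $\IN\cap V\neq\varnothing$, we have $0\in V$, and $d_+^{G_0}(0)=1\ge d_+^{G}(0)$, which is the base case. Assume $d_+^{G_0}(j)\ge d_+^{G}(j)$ for all $j\in\IN\cap V$ with $j\le x$; I want the same at $x+1$. The hypothesis $\mathcal K_{G_0,x}(\mathcal N(x))\le\mathcal K_{G,x}(\mathcal N(x))$ reads
\begin{equation*}
\Psi\bigl(d_+^{G_0}(x-1),d_+^{G_0}(x),d_+^{G_0}(x+1)\bigr)\le\Psi\bigl(d_+^{G}(x-1),d_+^{G}(x),d_+^{G}(x+1)\bigr),
\end{equation*}
where at $x=0$ one uses that $d_+^{G_0}(0)=1$ makes the first slot irrelevant. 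Suppose for contradiction that $d_+^{G_0}(x+1)<d_+^{G}(x+1)$. Monotonicity in the first two slots together with the induction hypothesis gives $\Psi(d_+^{G}(x-1),d_+^{G}(x),d_+^{G_0}(x+1))\le\Psi(d_+^{G_0}(x-1),d_+^{G_0}(x),d_+^{G_0}(x+1))$, while strict monotonicity in the third slot gives $\Psi(d_+^{G}(x-1),d_+^{G}(x),d_+^{G}(x+1))<\Psi(d_+^{G}(x-1),d_+^{G}(x),d_+^{G_0}(x+1))$. Chaining these with the displayed curvature inequality produces a strict contradiction, so $d_+^{G_0}(x+1)\ge d_+^{G}(x+1)$, closing the induction.

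\emph{Main obstacle.} The heart of the argument is the monotonicity lemma for $\Psi$. Geometrically it says that enlarging the forward rate $d_+(x+1)$ (faster spreading beyond $x+1$) lowers the curvature at $x$, while enlarging the rates $d_+(x-1),d_+(x)$ feeding into $x$ raises it. This is plausible and is supported by the sign of the second-neighbour contributions in the $\Gamma_2$-computation, but it must be read off from the explicit and somewhat involved closed form of $\Psi$ furnished by the combinatorial characterization, together with the verification that the affine extension is genuinely optimal so the two-variable reduction is exact. Establishing all three monotonicities uniformly over $\mathcal N\in[2,\infty]$ is the step I expect to require the most care.
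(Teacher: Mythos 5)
Your skeleton is the paper's: reduce the measure comparison to the pointwise out-degree comparison by telescoping the detailed-balance identity $m(n+1)/m(n)=d_+(n)/(1-d_+(n+1))$, and prove $d_+^{G_0}\geq d_+^G$ by induction using a local characterization of the optimal curvature as a function of $(d_+(x-1),d_+(x),d_+(x+1))$ together with monotonicity in the three slots (the paper packages this as Theorem~\ref{thm:CDcharNormalized}, Theorem~\ref{thm:CharModelSpace} and Lemma~\ref{lem:MonotoneG}). But the monotonicity lemma you rely on is stated too strongly, and as stated it is false: your $\Psi$ is \emph{not} unconditionally non-decreasing in its second argument. In the paper's variables $a=p(x-1)$, $b=p(x)$, $p=d_+-\tfrac12$, $c=1-\tfrac2D$, the characterization reads $2p(x+1)\leq \mathcal G(a,b,K,D)$ with $\mathcal F(a,b,K,D)=c(\tfrac12-b)+2a-K$ and $\mathcal G(a,b,K,D)=(\tfrac12+b)\,\frac{c\,(2a-K)}{\mathcal F(a,b,K,D)}-K$, and one computes
\begin{equation*}
\partial_b\,\mathcal G(a,b,K,D)=\frac{c\,(2a-K)}{\mathcal F(a,b,K,D)}\left(1+\frac{(\tfrac12+b)\,c}{\mathcal F(a,b,K,D)}\right),
\end{equation*}
whose sign is the sign of $2a-K$. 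So monotonicity in the middle slot holds only when the curvature does not exceed $2p(x-1)$; for $K>2a$ (positive curvature with small $d_+(x-1)$ does occur, cf.\ Theorem~\ref{thm:infinitePosCurv}) it reverses, and your chaining step $\Psi(d_+^{G}(x-1),d_+^{G}(x),\cdot)\leq\Psi(d_+^{G_0}(x-1),d_+^{G_0}(x),\cdot)$ breaks.

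This is exactly where two ingredients that your proof never invokes must enter: the model-space hypothesis $\mathcal K_{G_0,x}(\mathcal N(x))\leq 0$ (Definition~\ref{def:model}), and an additional induction invariant $d_+^{G_0}(n)\geq \tfrac12$, i.e.\ $p_0(n)\geq 0$, which is not assumed but must be proved along the way. The paper's induction carries both statements simultaneously: the first inequality in its chain changes only the first slot and the curvature parameter (monotonicity in $a$ and strict anti-monotonicity in $K$ are unconditional once $\mathcal F\geq 0$, so one may pass from $\mathcal K_{G,x}$ to the smaller $\mathcal K_{G_0,x}$); the middle-slot monotonicity is then applied only with model-space data, where $a=p_0(x-1)\geq 0$ and $K=\mathcal K_{G_0,x}(\mathcal N(x))\leq 0$ guarantee $2a-K\geq 0$; and the invariant $p_0(x+1)\geq 0$ is propagated by the bound $\mathcal G\geq 0$, valid under the same sign conditions $a\geq 0$, $K\leq 0$, $D\geq 2$ (Lemma~\ref{lem:MonotoneG}(2)). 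Your reduction step, base case ($d_+^{G_0}(0)=1$, irrelevance of the first slot when $d_+(x)=1$), and the use of strict monotonicity in the third slot are all sound and match the paper; but without strengthening the induction hypothesis to include $p_0\geq 0$ and without using $\mathcal K_{G_0,x}(\mathcal N(x))\leq 0$, the key monotonicity step --- the very one you flagged as the main obstacle --- does not go through.
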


By using the above comparison theorem for a model space with zero curvature, we prove the growth of the measure for normalized linear graphs satisfying $\CCD(0,D)$ for some constant $D$.
\begin{theorem}[Non-negative curvature implies polynomial measure growth]\label{thm: measure growth}
	Let $G=(V,w,m)$ be a normalized linear graph with $V=I \cap \Z$ satisfying $\inf V \leq 0$ and $\sup V = \infty$. Suppose $G$ satisfies $\CCD(0,D)$ for some constant $D\geq 4$. Then, one has
	\begin{align}
	\frac {m(j)}{m(i)} \leq \left(\frac{j+1}{i+1}\right)^{D-2}
	\end{align}
	for all $i,j \in \IN$ with $j>i$.
	Furthermore, $D-2$ is the optimal exponent and $p=d_+ -\frac 1 2$ is non-negative and $m$ is non-decreasing.
\end{theorem}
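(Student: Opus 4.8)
The plan is to derive the growth bound as a direct consequence of the Bishop--Gromov-type comparison theorem (Theorem~\ref{thm:comparison}), comparing $G$ against an explicit flat model space of constant dimension $D$. First I would construct a normalized linear graph $G_0=(\IN,w_0,m_0)$ whose vertex measure is $m_0(n)=(n+1)^{D-2}$. Since $G_0$ must be normalized, its edge weights are then forced: writing $a_n:=w_0(n,n+1)$, the normalization $m_0(n)=a_n+a_{n-1}$ together with the boundary condition $a_0=m_0(0)=1$ determines $a_n=\sum_{k=0}^{n}(-1)^{n-k}(k+1)^{D-2}$. Grouping this alternating sum into consecutive differences $(k+1)^{D-2}-k^{D-2}$ shows $a_n>0$ under the standing hypothesis $D\ge 4$, so $G_0$ is a well-defined normalized linear graph on $\IN$.

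The key step is to verify that $G_0$ is a model space with constant dimension function $\mathcal N\equiv D$, that is, $\mathcal K_{G_0,x}(D)\le 0$ (in fact $=0$) for every $x\in\IN$. This is where the combinatorial characterization of $\CCD(K,D)$ for linear graphs is invoked: one rewrites the curvature condition at $x$ as a polynomial inequality in the out-degrees $d_+^{G_0}(x-1),d_+^{G_0}(x),d_+^{G_0}(x+1)$ and checks that the weights induced by $m_0(n)=(n+1)^{D-2}$ make it hold with equality, the hypothesis $D\ge 4$ ensuring the dimension term has the correct sign. I expect this verification to be the main obstacle, since it is the only genuinely computational part and it must reproduce the exponent $D-2$ exactly. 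Once $G_0$ is known to be a model space, Theorem~\ref{thm:comparison} applies with $\mathcal N\equiv D$: as $G$ satisfies $\CCD(0,D)$ we have $\mathcal K_{G_0,x}(D)=0\le \mathcal K_{G,x}(D)$ for all $x\in\IN\cap V$, so the theorem yields
\[
\frac{m(j)}{m(i)}\le \frac{m_0(j)}{m_0(i)}=\left(\frac{j+1}{i+1}\right)^{D-2},
\]
which is the claimed bound. Optimality of $D-2$ is then immediate, since the model space $G_0$ attains equality above and thus witnesses that no smaller exponent can hold for all such graphs.

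It remains to prove $p=d_+-\tfrac12\ge 0$ and the monotonicity of $m$. Here the comparison theorem is of no direct use, because it bounds $d_+^{G}$ from above by $d_+^{G_0}$ rather than from below; instead I would argue directly on $G$. Testing the $\CCD(0,D,n)$ inequality against a function $f$ supported near $n$ collapses it to the scalar inequality $d_+(n)\ge\tfrac12$, using $\inf V\le 0$ so that the left neighbor $n-1$ is available and $D\ge 4$ so that the dimension term does not spoil the sign. This gives $p(n)=d_+(n)-\tfrac12\ge 0$ for all $n\in\IN$. Finally, from the detailed-balance identity $m(n+1)/m(n)=d_+(n)/d_-(n+1)$, which holds for normalized graphs because $w(n,n+1)=w(n+1,n)$, together with $d_+(n)\ge\tfrac12$ and $d_-(n+1)=1-d_+(n+1)\le\tfrac12$, we obtain $m(n+1)/m(n)\ge 1$, so $m$ is non-decreasing.
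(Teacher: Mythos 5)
There are two genuine gaps here, and both are fatal as written. First, your central claim that the weights induced by $m_0(n)=(n+1)^{D-2}$ satisfy the curvature characterization \emph{with equality} is false for every $D>4$. By Theorem~\ref{thm:CharModelSpace}, a normalized linear graph on $\IN$ with $\mathcal K_{G_0,n}(D)=0$ at every vertex is uniquely determined by the recursion $2p(n+1)=\Gc(p(n-1),p(n),0,D)$ together with the forced boundary value $p(0)=\tfrac12$; this is exactly the paper's model space $G_D$ with $p_D(n)=\frac{D-2}{2D+4(n-1)}$, whose measure ratio is $\frac{m_D(n+1)}{m_D(n)}=1+\frac{D-2}{n+1}+\frac{D-2}{(D+2n-2)(n+1)}$, and this does \emph{not} equal $\left(\frac{n+2}{n+1}\right)^{D-2}$ unless $D=4$ (at $n=0$ the two expressions are $D$ versus $2^{D-2}$). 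So your $G_0$ has strictly negative curvature at some vertices when $D>4$; to use it in Theorem~\ref{thm:comparison} you would have to prove the \emph{inequality} $\mathcal K_{G_0,x}(D)\le 0$ for the alternating-sum weights $a_n$, which is plausible (small-$n$ numerics support it) but is precisely the hard computation you have skipped, and it is messier than the paper's route, which takes the model with exact equality ($G_D$) and then converts $p(n)\le p_D(n)$ into the $(n+1)^{D-2}$ bound by the elementary estimate $1+\frac{D-2}{n+1}+\frac{D-2}{(D+2n-2)(n+1)}\le\left(1+\frac1{n+1}\right)^{D-2}$, using $D\ge4$. Worse, your optimality argument then collapses: since $G_0$ fails $\CCD(0,D)$ for $D>4$, it is not an admissible witness that the exponent $D-2$ is attained; the paper instead witnesses sharpness with $G_D$ itself, showing $\frac{m_D(n+1)}{m_D(n)}\ge 1+\frac{D-2}{n+1}>\left(\frac{n+2}{n+1}\right)^{D'-2}$ for large $n$ and any $D'<D$.

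Second, your local derivation of $p\ge 0$ is wrong: $\CCD(0,D,n)$ does \emph{not} collapse to the scalar inequality $d_+(n)\ge\tfrac12$. Counterexample: reflect $G_D$ via $n\mapsto C-n$ to get a normalized linear graph on $(-\infty,C]\cap\Z$; curvature is invariant under this isomorphism, so $\CCD(0,D)$ holds at every interior vertex, yet the reflected growth rate satisfies $\widetilde p(C-n)=-p_D(n)<0$ everywhere. Thus $p\ge 0$ is not a pointwise consequence of the curvature condition; it genuinely requires $\sup V=\infty$, a hypothesis your argument never invokes (you only use $\inf V\le 0$). The paper's proof is exactly this reflection idea turned around: apply Theorem~\ref{thm:pnExplicitStrong} to the reflected graph, obtaining $-p(n)=\widetilde p(C-n)\le p_D(C-n)$, and let $C\to\infty$ (possible because $\sup V=\infty$) to conclude $p(n)\ge 0$. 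Your final step, deducing monotonicity of $m$ from $p\ge 0$ via $m(n+1)/m(n)=d_+(n)/d_-(n+1)$, is fine once $p\ge 0$ is actually established.
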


For a graph $(V,E)$ we denote by $B_R(x)$ the ball of radius $R$ centered at $x\in V$ w.r.t. the combinatorial distance.
\begin{defn}\label{def:volumedoubling}
	Let $G=(V,E,w,m)$ be a graph. We say that $G$ satisfies the volume doubling property with the doubling constant $C,$ denoted by $\mathcal{VD}(C)$, if
	\begin{align}
	m(B_{2R}(x)) \leq C m(B_R(x))
	\end{align}
	for all $x \in V$ and all $R\in \IR_+$.
\end{defn}

Theorem~\ref{thm: measure growth} yields the following corollary.
\begin{corollary}\label{cor:CDimpliesVD}
	Let $G=(V,w,m)$ be an infinite, connected, linear normalized graph satisfying $\CCD(0,D)$ for some $D\geq 4$. Then 
	$\mathcal{VD}\left(2^{D-1}\right)$ holds.
\end{corollary}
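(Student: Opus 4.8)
The plan is to feed the two conclusions of Theorem~\ref{thm: measure growth}---that on $\IN$ the measure $m$ is non-decreasing and obeys the polynomial growth bound $m(j)/m(i)\le((j+1)/(i+1))^{D-2}$ for $j>i$---into a direct comparison of ball volumes. First I would dispose of the two-sided case: if $V=\IZ$ then Theorem~\ref{thm:LiouvilleNormaized} forces the edge weights, and hence $m$, to be constant, so $m(B_{2R}(x))/m(B_R(x))\le 3\le 2^{D-1}$ and the claim is immediate; after relabelling, every remaining infinite connected linear graph is a half line, so we may assume $V=\IN$. It then suffices to prove $m(B_{2R}(x))\le 2^{D-1}m(B_R(x))$ for all $x\in\IN$ and $R\in\IR_+$, and since balls change only at integer radii I would reduce to integer radii, noting $\lfloor 2R\rfloor\in\{2\lfloor R\rfloor,2\lfloor R\rfloor+1\}$; writing $r=\lfloor R\rfloor$ it is enough to bound $m(B_S(x))$ by $2^{D-1}m(B_{\lfloor S/2\rfloor}(x))$ for integers $S$.

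The heart of the argument is a doubling estimate for left-anchored balls $B_s(0)=\{0,\dots,s\}$, obtained by a two-to-one pairing. I would pair each index $j\in\{0,\dots,s\}$ with $\lfloor j/2\rfloor$. The growth bound gives $m(j)\le\left(\frac{j+1}{\lfloor j/2\rfloor+1}\right)^{D-2}m(\lfloor j/2\rfloor)$, and since $\frac{j+1}{\lfloor j/2\rfloor+1}\le 2$ for every $j$, this reads $m(j)\le 2^{D-2}m(\lfloor j/2\rfloor)$. As the map $j\mapsto\lfloor j/2\rfloor$ is at most two-to-one onto $\{0,\dots,\lfloor s/2\rfloor\}$, summing yields
\[
m(B_s(0))\le 2^{D-2}\sum_{j=0}^{s}m(\lfloor j/2\rfloor)\le 2^{D-1}m(B_{\lfloor s/2\rfloor}(0)).
\]
This is exactly where the exponent $D-1=(D-2)+1$, and its sharpness (witnessed by the extremal measure $m(k)=(k+1)^{D-2}$), comes from.

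To finish I would split according to whether $B_R(x)$ reaches the origin. If $x\le r$ the ball is itself left-anchored, $B_S(x)=B_{x+S}(0)$ and $B_r(x)=B_{x+r}(0)$, and since $\lfloor(x+S)/2\rfloor\le x+r$ the origin estimate applies directly. If $x>r$ the ball is interior and I would decompose $B_S(x)=B_R(x)\cup L\cup\mathcal R$ into $B_R(x)$, a left annulus $L$, and a right annulus $\mathcal R$. The right annulus is handled by the translation $j\mapsto j-r-1$, whose growth ratio $\frac{j+1}{j-r}$ is at most $2$ when $x>r$, so $\mathcal R$ contributes at most $2^{D-2}m(B_R(x))$; the left annulus consists of the smallest and hence, by monotonicity, least massive indices, contributing at most $m(B_R(x))$. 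Summing the three pieces gives $(2+2^{D-2})m(B_R(x))\le 2^{D-1}m(B_R(x))$, the final inequality being precisely where $D\ge 4$ (in fact $D\ge 3$) enters.

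The genuinely new input is the origin estimate, whose two-to-one pairing converts the one-sided polynomial growth bound into a doubling inequality with the sharp constant. The obstacle I anticipate is bookkeeping rather than conceptual: carrying the single constant $2^{D-1}$ uniformly across all centres and all real radii, and in particular checking that the interior decomposition never exceeds $2^{D-1}$, so that the one doubling constant of Definition~\ref{def:volumedoubling} is attained with no case-dependent loss.
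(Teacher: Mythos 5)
Your proof is correct, and it reaches the sharp constant $2^{D-1}$ by a route that is organized differently from the paper's, although both arguments run on exactly the same fuel, namely Theorem~\ref{thm: measure growth} (the bound $m(j)/m(i)\le((j+1)/(i+1))^{D-2}$ together with monotonicity of $m$). The paper factors the proof through an intermediate \emph{sphere doubling} property $\mathcal{SD}(C)$: $m(S_j(x_0))\le C\,m(S_i(x_0))$ for $j\le 2i+1$. Theorem~\ref{thm: CD implies SD} derives $\mathcal{SD}(2^{D-2})$ from $\CCD(0,D)$ by a short case analysis on the two-point spheres $S_j(x)=\{x+j,x-j\}$ (the right point via $\frac{x+j+1}{x+i+1}\le 2$ when $j\le 2i+1$, the left point via monotonicity), and Theorem~\ref{thm: SD implies VD} converts $\mathcal{SD}(C)$ into $\mathcal{VD}(2C)$ by pairing $S_{2k}$ and $S_{2k+1}$ with $S_k$ --- which is precisely your two-to-one pairing $j\mapsto\lfloor j/2\rfloor$, executed at the level of spheres rather than vertices. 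So your origin estimate and your interior decomposition $L\cup B_r(x)\cup\mathcal{R}$ are close cousins of the paper's two steps, fused into a single direct ball computation; your interior bookkeeping even lands on the slightly smaller constant $2+2^{D-2}\le 2^{D-1}$. What the paper's modular route buys is reusability: $\mathcal{SD}$ is shown to be (sharply) preserved under Cartesian products, which is exploited immediately afterwards and has no analogue for $\mathcal{VD}$ directly. What your route buys is completeness of detail at the edges of the statement: you explicitly dispose of $V=\IZ$ via Theorem~\ref{thm:LiouvilleNormaized} and reduce real radii to integers via $\lfloor 2R\rfloor\le 2\lfloor R\rfloor+1$, points the paper leaves implicit (its Theorem~\ref{thm: CD implies SD} is stated only for $G=(\IN,w,m)$ and integer radii). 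All your individual estimates check out: $\frac{j+1}{\lfloor j/2\rfloor+1}\le 2$ always; in the interior case $x>r$ one has $j\ge x+r+1\ge 2r+2$, so $\frac{j+1}{j-r}\le 2$ holds for the right annulus; and $m(L)\le(r+1)m(x-r)\le m(B_r(x))$ for the left annulus, so the total $(2+2^{D-2})m(B_r(x))\le 2^{D-1}m(B_r(x))$ needs only $D\ge 3$, with $D\ge 4$ genuinely required upstream in Theorem~\ref{thm: measure growth} itself.
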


\begin{defn}\label{def: poincare}
	We say that a graph $G=(V,E,w,m)$ satisfies the Poincar\'e inequality with constant $C$, denoted by $P(C),$ if
	\begin{align}
	\sum_{x \in B_R(x_0)} m(x)(f(x)-f_B)^2 
	\leq CR^2 \sum_{x,y \in B_{2R}(x_0)} w(x,y)(f(x)-f(y))^2
	\end{align}
	for all $x_0 \in V$ and $R>0$
	with
	$$
	f_B := \frac{1}{m(B_R(x_0))}\sum_{x\in B_R(x_0)}f(x)m(x)
	$$
\end{defn}

Moreover, we prove the Poincar\'e inequality in this setting.
\begin{theorem}\label{thm: CD implies Poincare inequality}
	Let $G=(V,w,m)$ be an infinite, connected, linear normalized graph satisfying $\CCD(0,D)$ for some $D<\infty$. Then $P(16)$ holds.
\end{theorem}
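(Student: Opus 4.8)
The plan is to exploit the one-dimensional structure together with the monotonicity of the data forced by non-negative curvature. Since $G$ is linear, the combinatorial ball $B_R(x_0)$ is the interval $B=\{p,\dots,q\}:=\{x_0-L,\dots,x_0+L\}\cap V$ with $L=\lfloor R\rfloor$; when $L=0$ the ball is a single vertex and the left-hand side of $P(16)$ vanishes, so I may assume $L\ge1$. The two facts I would invoke are that the edge weights are non-decreasing, i.e.\ $d_+(n)\ge\tfrac12$, equivalently $w_n:=w(n,n+1)\ge m(n)/2$, and that $m$ is non-decreasing. These are the conclusions of Theorem~\ref{thm: measure growth}; although that theorem is stated for $D\ge4$, the condition $\CCD(0,D)$ implies $\CCD(0,\max(D,4))$ because enlarging the dimension parameter weakens the inequality, so applying the theorem with dimension $\max(D,4)\ge4$ yields both monotonicity statements for every finite $D$. (On $\IZ$ the weights are even constant by Theorem~\ref{thm:LiouvilleNormaized}.)

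Next, rather than comparing $f$ with its value at $x_0$, I would start from the variance identity
\[
\sum_{x\in B}m(x)(f(x)-f_B)^2=\frac1{m(B)}\sum_{x<y}m(x)m(y)(f(x)-f(y))^2 ,
\]
with $x,y$ ranging over $B$. Writing $g_n:=f(n+1)-f(n)$ and telescoping, Cauchy--Schwarz gives for $x<y$ in $B$ that $(f(y)-f(x))^2\le\big(\sum_{n=x}^{y-1}w_n^{-1}\big)\big(\sum_{n=x}^{y-1}w_ng_n^2\big)$. Here both monotonicity facts enter: from $w_n\ge m(n)/2$ and $m(n)\ge m(x)$ for $n\ge x$ I get $\sum_{n=x}^{y-1}w_n^{-1}\le 2(y-x)/m(x)$, so that $(f(y)-f(x))^2\le\frac{2(y-x)}{m(x)}\sum_{n=x}^{y-1}w_ng_n^2$. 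The point is that the denominator carries the \emph{smaller} index $m(x)$, which is exactly the factor supplied by the variance identity; this cancellation is what makes the measure-growth exponent $D-2$ disappear and leaves a universal constant.

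Finally I would substitute this bound into the variance identity and interchange summation, fixing an edge $n\in\{p,\dots,q-1\}$ and summing over $x\le n<n+1\le y$:
\[
\sum_{x\in B}m(x)(f(x)-f_B)^2\le\frac{2}{m(B)}\sum_{n=p}^{q-1}w_ng_n^2\sum_{x=p}^{n}\sum_{y=n+1}^{q}m(y)(y-x).
\]
Estimating $y-x\le q-p=2L$, the number of admissible $x$ by $2L+1$, and $\sum_{y=n+1}^{q}m(y)\le m(B)$, the inner factor is at most $2L(2L+1)m(B)$; hence the left-hand side is at most $4L(2L+1)\sum_{n=p}^{q-1}w_ng_n^2\le12R^2\sum_{n=p}^{q-1}w_ng_n^2$, where $4L(2L+1)\le12L^2\le12R^2$ uses $L\ge1$. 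Since every edge $\{n,n+1\}$ with $p\le n\le q-1$ lies in $B_R\subseteq B_{2R}(x_0)$, its contribution is dominated by the Dirichlet sum on the right-hand side of $P(16)$, and $12\le16$ closes the estimate (in fact with margin). I expect the only real obstacle to be the first step, namely pinning down the two monotonicity properties for all finite $D$; once they are available, the normalising factor $1/m(B)$ in the variance identity is precisely what neutralises the growing measure and forces a dimension-independent constant.
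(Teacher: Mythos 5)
Your proof is correct, and it takes a genuinely different route from the paper's. The paper argues spectrally: it restricts $G$ to the interval $B_R(x_0)$, lower-bounds the Cheeger constant of the restricted graph by $\frac{1}{4R}$ (Lemma~\ref{lemma: Cheeger intervals}, whose proof uses precisely your two inputs, $p=d_+-\frac12\geq 0$ and monotonicity of $m$, via $m([A,n])\leq 2(B-A)w(n,n+1)$), and then combines Cheeger's inequality $\lambda_1\geq\frac12 h^2$ with the min--max principle applied to $g=(f-f_B)|_{B_R(x_0)}$ (Lemma~\ref{lemma: Cheeger Poincare}). You bypass spectral theory altogether: the variance identity, the telescoping Cauchy--Schwarz estimate $(f(y)-f(x))^2\leq\bigl(\sum_n w_n^{-1}\bigr)\bigl(\sum_n w_n g_n^2\bigr)$, and the cancellation of the factor $m(x)$ from the variance identity against $w_n^{-1}\leq 2/m(x)$ give a direct weighted-path argument; I checked the interchange of summation and the bound $4L(2L+1)\leq 12L^2\leq 12R^2$ for $L\geq 1$, so your argument in fact yields $P(12)$, with further slack since the Dirichlet sum in Definition~\ref{def: poincare} counts each edge twice. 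What each route buys: yours is more elementary and gives a marginally better constant; the paper's is modular (the interval Cheeger bound and Lemma~\ref{lemma: Cheeger Poincare} are reusable), and its intermediate spectral gap $\lambda_1(G_{B_R(x_0)})\geq \frac{1}{32R^2}$ is of independent interest. In one respect you are more careful than the paper: Theorem~\ref{thm: measure growth} is stated for $D\geq 4$ while the present theorem allows any finite $D$, and your observation that $\CCD(0,D)$ implies $\CCD(0,\max(D,4))$ (enlarging $D$ only weakens the term $\frac1D(\Delta f)^2$) fills a step the paper's proof passes over silently; like the paper, you implicitly normalize the labelling so that Theorem~\ref{thm: measure growth} applies (shift or reflect a half-line so that $\inf V\leq 0$ and $\sup V=\infty$; on $\IZ$ the weights are constant by Theorem~\ref{thm:LiouvilleNormaized}), which is harmless.
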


For the Delmotte characterization, we need to assume an ellipticity assumption $E(\alpha)$ on the graph defined by
$$
w(x,y) \geq \alpha m(x)
$$
whenever $x\sim y$.
We will show that normalized linear graphs with $\CCD(0,D)$ satisfy $E(\alpha)$.
\begin{corollary}\label{cor:Ellipticity}
Let $G=(\IN,w,m)$ be a normalized linear graph satisfying $\CCD(0,D)$ for some $D<\infty$. Then, $G$ satisfies $E(\alpha)$ for $\alpha=1/D$.
\end{corollary}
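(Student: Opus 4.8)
The plan is to translate $E(1/D)$ into a statement purely about the in- and out-degrees $d_+(n)=w(n,n+1)/m(n)$ and $d_-(n)=w(n,n-1)/m(n)$. Since $G=(\IN,w,m)$ is normalized we have $d_+(n)+d_-(n)=1$ at every interior vertex $n\ge 1$ and $d_+(0)=1$, and since all edge weights are strictly positive, $d_\pm$ are strictly positive wherever defined. Reading the condition $w(x,y)\ge\alpha\,m(x)$ off each edge $\{n,n+1\}$ from both endpoints, $E(\alpha)$ is equivalent to $d_+(n)\ge\alpha$ and $d_-(n+1)\ge\alpha$ for all $n$. Hence, for $\alpha=1/D$, it suffices to establish the two-sided bound $\tfrac1D\le d_+(n)\le 1-\tfrac1D$ for every $n\ge 1$ (together with the trivial $d_+(0)=1$); the upper bound is the same as $d_-(n)\ge 1/D$. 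Throughout I work in the relevant range $D\ge 2$, consistent with the dimension functions taking values in $[2,\infty]$ in Definition~\ref{def:model}.

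To obtain scalar inequalities from $\CCD(0,D,n)$ I would test the curvature inequality against two ramp functions. Writing $s_n:=d_+(n)$ and $\lambda:=\tfrac12-\tfrac1D\ge 0$, the ``right ramp'' $f$ with $f\equiv 0$ on $\{\dots,n\}$, $f(n+1)=1$, $f(n+2)=2$ has $\Delta f(n)=s_n$, and a short computation shows the key simplification $\Delta\Gamma(f)(n)=0$, so that
\[
\Gamma_2(f)(n)=-\Gamma(f,\Delta f)(n)=\tfrac12 s_n\bigl(1+s_n-2s_{n+1}\bigr).
\]
Dividing $\Gamma_2(f)(n)\ge \tfrac1D(\Delta f)^2(n)=\tfrac1D s_n^2$ by $s_n>0$ gives the forward recursion
\[
s_{n+1}\le \tfrac12+\lambda\,s_n\qquad(n\ge 0),
\]
where the case $n=0$ (with $d_-(0)=0$) is exactly the boundary computation $\Gamma_2(f)(0)=d_-(1)$, yielding $d_-(1)\ge 1/D$. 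The mirror-image ``left ramp'' gives, in the same way, $2d_-(n-1)\le 1+d_-(n)(1-2/D)$ for $n\ge 1$, equivalently
\[
s_{n-1}\ge \tfrac1D+\lambda\,s_n\qquad(n\ge 1).
\]
These two inequalities are precisely the nonnegativity of the two diagonal coefficients of the quadratic form obtained by minimizing $\Gamma_2(f)(n)-\tfrac1D(\Delta f)^2(n)$ over the second-neighbour values $f(n\pm2)$, so they are genuine consequences of the curvature condition and not artifacts of the chosen test functions.

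The bounds now follow. The left-ramp inequality immediately gives $s_{n-1}\ge 1/D$ for all $n\ge 1$ (both summands on the right are nonnegative), i.e. $d_+(m)\ge 1/D$ for all $m\ge 0$. For the upper bound I run the forward recursion by induction: the base case $s_1\le \tfrac12+\lambda=1-\tfrac1D$ is the boundary vertex, and if $s_n\le 1-\tfrac1D$ then $s_{n+1}\le \tfrac12+\lambda(1-\tfrac1D)$; a one-line check shows that $\tfrac12+\lambda(1-\tfrac1D)\le 1-\tfrac1D$ is equivalent to $\lambda\ge 0$, i.e. to $D\ge 2$. Thus $s_n\le 1-1/D$ for all $n\ge 1$, completing the two-sided bound and hence $E(1/D)$.

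The only real work is the decoupling in the second paragraph. A priori $\CCD(0,D,n)$ is a positive-semidefiniteness condition for a quadratic form in the four increments $f(n\pm1)-f(n)$ and $f(n\pm2)-f(n)$, and it entangles the degrees at $n-1$, $n$, $n+1$. The mechanism that makes the argument work is the choice of second-neighbour values making $\Delta\Gamma(f)(n)=0$, which collapses $\Gamma_2$ to the single term $-\Gamma(f,\Delta f)(n)$ and turns the curvature bound into a first-order recursion; verifying this vanishing, and that the resulting inequalities are exactly the diagonal conditions of the reduced form, is the main computation. The remaining point is purely arithmetic: the induction closes exactly for $D\ge 2$, which is the meaningful range.
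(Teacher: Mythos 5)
Your proof is correct, and it takes a genuinely different, more elementary route than the paper's. The paper deduces the corollary from its Bishop--Gromov machinery: monotonicity of $m$ (from Theorem~\ref{thm: measure growth}, whose proof uses a reflection argument) reduces the claim to the single bound $d_-(x)\geq 1/D$, and the sharp comparison bound $p(x)\leq p_D(x)$ of Theorem~\ref{thm:pnExplicitStrong} then gives $d_-(x)=\tfrac12-p(x)\geq\tfrac12-\tfrac{D-2}{2D}=\tfrac1D$. You instead extract two first-order difference inequalities directly from the pointwise condition $\CCD(0,D,n)$: your forward recursion $s_{n+1}\leq\tfrac12+\lambda s_n$ and backward inequality $s_{n-1}\geq\tfrac1D+\lambda s_n$ are exactly the diagonal conditions $W_+(n)\geq 0$ and $W_-(n)\geq 0$ of Proposition~\ref{prop:equv} specialized to normalized graphs, so you could have quoted that proposition rather than re-deriving them; that said, I checked your ramp-function computation (including $\Delta\Gamma(f)(n)=0$ at interior vertices and the boundary identity $\Gamma_2(f)(0)=d_-(1)$) and it is right, as are the one-step induction for the upper bound and the trivial deduction of the lower bound. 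Your argument is local and self-contained: it never uses the determinant condition, the comparison theorem, or the monotonicity of $m$ (which in the paper rests on the reflection trick and on Theorem~\ref{thm:pnExplicitStrong}, stated only for $D>2$), and it covers $D=2$, which the paper's chain formally does not. What the paper's route buys instead is the much stronger quantitative decay $p(n)\leq\tfrac{D-2}{2D+4(n-1)}$, of which ellipticity is a one-line corollary; your route buys ellipticity alone, but cheaply and at every vertex from both sides.

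One small point: your blanket restriction to $D\geq 2$ deserves an actual argument rather than an appeal to Definition~\ref{def:model}, which concerns model spaces and is not a hypothesis of the corollary. In fact, for $D<2$ the statement is vacuously true, and your own two inequalities prove this: the forward recursion at $n=0$ gives $s_1\leq 1-\tfrac1D<\tfrac12$, while the backward inequality at $n=2$ gives $s_1\geq\tfrac1D+\lambda s_2\geq\tfrac1D+\lambda=\tfrac12$ (since $\lambda<0$ and $s_2\leq 1$), a contradiction; hence no infinite normalized linear graph on $\IN$ satisfies $\CCD(0,D)$ with $D<2$. Adding this sentence closes the only loose end; note the paper's own proof is, if anything, more restrictive at this point, since it invokes Theorem~\ref{thm: measure growth}, stated for $D\geq 4$.
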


Due to Delmotte \cite[Theorem~1.7]{Delmotte1999parabolic}, the following three properties are equivalent for  all normalized Graph Laplacians.
\begin{enumerate}
	\item The volume doubling property $\mathcal{VD}(C_1)$, the Poincar\'e inequality $P(C_2)$ and ellipticity $E(\alpha)$ for some $C_1,C_2,\alpha>0$.
	\item Parabolic Harnack inequality for the heat semigroup.
	\item Gaussian heat kernel estimate.
\end{enumerate}

So that, we get parabolic Harnack inequality and Gaussian heat kernel estimate by Corollary~\ref{cor:CDimpliesVD}, Theorem~\ref{thm: CD implies Poincare inequality} and Corollary~\ref{cor:Ellipticity}.

Several results can be extended to weakly spherically symmetric graphs, see Corollary~\ref{coro:symm1} and \ref{coro:symm2}. 
In Section~\ref{s:egg}, we construct a class of infinite weighted linear graphs with $\CCD(K,D),$ for $K>0$ and $D<\infty,$ which do not satisfy Feller property, see \cite{wojciechowski2017feller} for definitions.

The paper is organized as follows: In the next section, we introduce combinatorial characterizations of Bakry-\'Emery curvature and Ollivier curvature, and prove Theorem~\ref{thm:Ollivier}. In Section~\ref{s:phy}, we study physical linear graphs and prove Theorem~\ref{thm:Liouville}, \ref{thm:concaveweights}, \ref{thm:weightgrowth} and \ref{thm:scwithdecay}. Section~\ref{s:nor} is devoted to normalized linear graphs where we give the proofs of Theorem~\ref{thm:LiouvilleNormaized}, \ref{thm:comparison}, \ref{thm: measure growth} and \ref{thm: CD implies Poincare inequality} and Corollary~\ref{cor:CDimpliesVD} and \ref{cor:Ellipticity}. In Section~\ref{s:sym} we extend the results to spherically symmetric graphs. Finally in Section~\ref{s:egg}, we give examples which have positive curvature bound, but are non-Feller.

\section{Preliminaries}\label{s:pre}


\subsection{Curvature dimension conditions}
A sufficient and necessary condition for curvature dimension condition can be obtained on linear graphs, see \cite[Theorem~2.5]{HuaLin16}. 
\begin{proposition}\label{prop:equv} Let $G=(V,w,m)$ be a linear graph and $n\in V.$ Then for $K\in \R$ and $D\in (0,\infty],$ $\CCD(K,D,n)$ holds if and only if for all $X,Y\in\R,$
\begin{eqnarray*}&&(1-\frac{2}{D})\left(d_{-}(n)X+d_{+}(n)Y\right)^2\\
&\geq&\frac{d_{-}(n)}{2}\left(d_-(n-1)-3d_+(n-1)+d_-(n)+d_+(n)+2K\right)X^2\\
&+&\frac{d_{+}(n)}{2}\left(d_-(n)+d_+(n)-3d_-(n+1)+d_+(n+1)+2K\right)Y^2\end{eqnarray*}
where we fix $X=0$ if $n-1 \notin V$ making the term $d_\pm(n-1)$ redundant and we fix $Y=0$ if $n+1 \notin V$ making the term $d_\pm(n+1)$ redundant.
  \end{proposition}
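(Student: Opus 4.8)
The statement is a pointwise, finite-dimensional claim, so the plan is to expand $\Gamma_2(f)(n)$ into an explicit quadratic polynomial in the values of $f$ near $n$ and then read off $\CCD(K,D,n)$ as the nonnegativity of a quadratic form in two variables. First I would record that on any graph $\Gamma(g)(x)=\tfrac12\sum_{y\sim x}\tfrac{w(x,y)}{m(x)}(g(y)-g(x))^2$ and, by polarization, $\Gamma(g,h)(x)=\tfrac12\sum_{y\sim x}\tfrac{w(x,y)}{m(x)}(g(y)-g(x))(h(y)-h(x))$. Since $\Gamma(f)$, $\Gamma_2(f)$ and $\Delta f$ at $n$ are unchanged when a constant is added to $f$, I normalize $f(n)=0$ and set $X=f(n-1)-f(n)$, $Y=f(n+1)-f(n)$, together with the two second-shell values $a=f(n-2)$ and $b=f(n+2)$. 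Then $\Delta f(n)=d_-(n)X+d_+(n)Y$ and $\Gamma(f)(n)=\tfrac12\bigl(d_-(n)X^2+d_+(n)Y^2\bigr)$, so the right-hand side $\tfrac1D(\Delta f)^2(n)+K\Gamma(f)(n)$ of the curvature inequality depends only on $X$ and $Y$.

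Next I would compute $\Gamma_2(f)(n)=\tfrac12\Delta\Gamma(f)(n)-\Gamma(f,\Delta f)(n)$ explicitly. As $n$ has only the neighbors $n\pm1$, the quantity $\Delta\Gamma(f)(n)$ sees $\Gamma(f)$ at $n-1,n,n+1$, which involve $f$ only on $\{n-2,\dots,n+2\}$; likewise $\Gamma(f,\Delta f)(n)$ sees $\Delta f$ at $n-1,n,n+1$ and hence again only $f$ on $\{n-2,\dots,n+2\}$. The expansion turns $\Gamma_2(f)(n)$ into a quadratic polynomial in $a,X,Y,b$. The structural feature that makes linear graphs tractable is that $a$ enters only through the edge $\{n-2,n-1\}$ and therefore couples with $X$ alone, while $b$ enters only through $\{n+1,n+2\}$ and couples with $Y$ alone; there is no $a$--$b$, $a$--$Y$ or $b$--$X$ interaction.

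Since $a$ and $b$ do not occur in $\tfrac1D(\Delta f)^2(n)+K\Gamma(f)(n)$, requiring $\Gamma_2(f)(n)\ge\tfrac1D(\Delta f)^2(n)+K\Gamma(f)(n)$ for all $f$ is equivalent to requiring the same inequality with $\Gamma_2(f)(n)$ replaced by its minimum over $a$ and $b$, for all $X,Y$. The coefficients of $a^2$ and $b^2$ are the nonnegative numbers $\tfrac14 d_-(n)d_-(n-1)$ and $\tfrac14 d_+(n)d_+(n+1)$, so I would complete the square in $a$ and in $b$ separately; the minimizers are the affine extensions of $f$ across $n-1$ and across $n+1$, and the minimization lowers the contributions of the outermost degrees $d_-(n-1)$ and $d_+(n+1)$. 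The resulting $\min_{a,b}\Gamma_2(f)(n)$ is a quadratic form in $X,Y$ whose only cross term is $d_-(n)d_+(n)XY$. I would then multiply the reduced inequality by $2$ and isolate $(1-\tfrac2D)(\Delta f)^2(n)=(1-\tfrac2D)\bigl(d_-(n)X+d_+(n)Y\bigr)^2$ on the left: since this term carries the whole $XY$ dependence, only pure $X^2$ and $Y^2$ terms survive on the right, and collecting them yields exactly the stated coefficients, the factors $-3d_+(n-1)$ and $-3d_-(n+1)$ emerging from the bookkeeping of the $\Delta\Gamma$ and $\Gamma(f,\Delta f)$ contributions. The boundary conventions (fixing $X=0$ when $n-1\notin V$ and $Y=0$ when $n+1\notin V$, and reading $d_\pm(n\pm1)=0$ whenever the corresponding vertex is absent) are automatically consistent with this computation, since then the relevant second-shell variable is simply absent.

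I expect the only real difficulty to be organizational: carrying out the expansion of $\Gamma_2(f)(n)$ faithfully and tracking the signs and the factor introduced by completing the square, rather than any conceptual point. The single genuine idea is the decoupling observation, which allows the two second-shell values $f(n\pm2)$ to be eliminated by minimization and thereby reduces $\CCD(K,D,n)$ on a linear graph to a two-variable quadratic inequality.
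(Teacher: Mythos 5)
Your proposal is correct and takes essentially the same route as the source: the paper states Proposition~\ref{prop:equv} without an internal proof, citing Theorem~2.5 of Hua--Lin, whose derivation is exactly your reduction --- expand $\Gamma_2(f)(n)$ as a quadratic form in $a=f(n-2)$, $X$, $Y$, $b=f(n+2)$, eliminate the second-shell values by completing the square (the $a^2$, $b^2$ coefficients are indeed $\tfrac14 d_-(n)d_-(n-1)\geq 0$ and $\tfrac14 d_+(n)d_+(n+1)\geq 0$, with the affine extensions as minimizers), and absorb the sole cross term via $2d_-(n)d_+(n)XY=(d_-(n)X+d_+(n)Y)^2-d_-(n)^2X^2-d_+(n)^2Y^2$. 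I verified the bookkeeping: the minimized $\Gamma_2$ has $X^2$-coefficient $\tfrac14 d_-(n)\bigl(3d_+(n-1)+d_-(n)-d_+(n)-d_-(n-1)\bigr)$ and cross term $d_-(n)d_+(n)XY$, and after subtracting $\tfrac1D(\Delta f)^2+K\Gamma(f)$ and multiplying by $2$ one recovers verbatim the stated inequality with the coefficients $\tfrac{d_-(n)}{2}\bigl(d_-(n-1)-3d_+(n-1)+d_-(n)+d_+(n)+2K\bigr)$ and its mirror, including the correct handling of the boundary conventions.
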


From now on, for $n \in V$ and $K\in\R$ and $D\in(0,\infty],$ we set 
$$
W_-(n):= \frac12\left( -d_-(n-1) + 3 d_+(n-1) + \Big( 1- \frac 4 D \Big)d _-(n)-d_+(n)- 2K\right)   
$$
if $n-1 \in V$ and $W_-(n) := 0$ if $n-1 \notin V,$ and set
$$
W_+(n):= \frac12\left( -d_+(n+1) + 3 d_-(n+1) + \Big( 1- \frac 4 D\Big)d _+(n)-d_-(n)- 2K\right)
$$
if $n+1 \in V$ and $W_+(n) := 0$ if $n+1 \notin V$.
 By Proposition~\ref{prop:equv}, for a linear graph $G,$ $\CCD(K,D,n)$ holds if and only if
the following matrix is non-negative, i.e.
\[ \left( \begin{array}{cc}
d_-(n)W_-(n)&  (1-\frac{2}{D})d_-(n)d_+(n) \\
(1-\frac{2}{D})d_-(n)d_+(n)& d_+(n)W_+(n) \end{array} \right)\geq 0.\] 
In the case that $V=\IZ$ and $n\in \IZ,$ or $V=\IN$ and $n\in \N,$ the above is equivalent to 
\begin{equation}\label{eq:equcond}W_-(n)\geq 0\quad \mathrm{and}\quad W_-(n)W_+(n)\geq (1-\frac{2}{D})^2d_{-}(n)d_+(n).\end{equation}

\subsection{Ollivier curvature}
We prove that in case of a log-concave measure, Bakry-\'Emery curvature can be lower bounded by Ollivier curvature.

It is proven in \cite[Theorem~2.8]{MuenchWojciechowski17} that
\[
\kappa(n,n+1)= d_-(n+1) - d_+(n+1) - d_-(n) + d_+(n)
\]
where $\kappa(x,y)$ denotes the Ollivier curvature between two vertices.
Using this formula, we can now prove the connection between Ollivier and Bakry-\'Emery curvature.

\begin{proof}[Proof of Theorem~\ref{thm:Ollivier}]
We aim to prove $\CCD(K,\infty,n)$ with $K=\frac {\kappa(n,n+1) \wedge \kappa(n,n-1)}2$ when assuming $m(n-1)m(n+1) \leq m(n)^2.$
We calculate
\[
2W_-(n) = \kappa(n,n-1) + 2 d_+(n-1) - 2K \geq 2d_+(n-1) = 2\frac{w(n,n-1)}{m(n-1)} >0
\]
and similarly,
\[
2W_+(n) = \kappa(n,n+1) + 2 d_-(n+1) - 2K \geq 2d_-(n+1)
=2\frac{w(n,n+1)}{m(n+1)} >0
\]
Multiplying and applying log-concavity of the measure yields
\[
W_-(n)W_+(n) \geq  \frac{w(n,n-1)}{m(n-1)} \cdot \frac{w(n,n+1)}{m(n+1)} \geq \frac{w(n,n-1)}{m(n)} \cdot \frac{w(n,n+1)}{m(n)} = d_-(n)d_+(n).
\]
This implies $\CCD(K,\infty,n)$ due to Proposition~\ref{prop:equv}.
\end{proof}

\subsection{Intrinsic metrics}\label{s:intrinsic}
The Laplacian associated with the graph $(V,E,w,m)$ is a bounded operator on $\ell^2(m)$ if and only if $$\sup_{x\in V} \frac{\sum_{y\in V:y\sim x}w(x,y)}{m(x)}<\infty,$$ see \cite{KellerLenz12}.
In order to deal with unbounded Laplacians, e.g. Laplacians on physical graphs, we need the following intrinsic metrics introduced in \cite{FrankLenzWingert12}.

A pseudo metric $\rho$ is a symmetric function, $\rho:V\times V\to[0,\infty),$ with zero diagonal which satisfies the triangle inequality. 
We denote by $B_r^\rho(x):=\{y\in V: \rho(y,x)\leq r\}$ the ball w.r.t. the pseduo metric $\rho$ of radius $r$ centered at $x.$

\begin{defn}[Intrinsic metric]\label{d:intrinsic} A pseudo metric $\rho$ on $V$ is called an \emph{intrinsic metric} if
\begin{align*}
\sum_{y\in V:y\sim x}w(x,y)\rho^{2}(x,y)\leq m(x),\qquad \forall x\in V.
\end{align*}
\end{defn}

For any $x,y\in V,$ a walk from $x$ to $y$ is a sequence of vertices $\{x_i\}_{i=0}^k$ such that
$$x=x_{0}\sim\ldots\sim x_{k}=y.$$ Here $k$ is called the length of the walk. We denote by $P_{x,y}$ the set of walks from $x$ to $y.$
Let $d$ denote the combinatorial distance on a (connected) graph $(V,E),$ i.e. for any $x,y\in V,$
$d(x,y)$ is the minimal length of the walks from $x$ to $y.$ As is well-known, the combinatorial distance $d$ is an intrinsic metric for normalized graphs.

Intrinsic metrics always exist on a general graph $(V,E,w,m).$ There is a natural intrinsic metric introduced by  \cite{Huang12,Folz14}.
Define the \emph{weighted vertex degree} $\mathrm{Deg}:V\to[0,\infty)$ by
\begin{align*}
    \mathrm{Deg}(x)=\frac{1}{m(x)}\sum_{y\in V:y\sim x}w(x,y),\qquad x\in V.
\end{align*}

\begin{example}\label{ex:intrinsic}
For any weighted graph, there is an intrinsic path metric defined by
\begin{equation}\label{def:huangetal}
    \sigma(x,y)=\inf_{\{x_i\}_{i=0}^n\in P_{x,y}}\sum_{i=0}^{n-1} (\mathrm{Deg}(x_{i})\vee\mathrm{Deg}(x_{i+1}))^{-\frac{1}{2}}.
\end{equation}
\end{example} 

\section{Physical linear graphs}\label{s:phy}

In this section, we study curvature dimension conditions on physical linear graphs.  Let $(V,w)$ be a physical linear graph. For convenience, we write $w_n:=w(n,n+1),$ for all $n\in V$ and set $w_{-2}=w_{-1}=0$ and $m_{-2}=m_{-1}=1$ if it is supported on $\IN.$

Given a physical linear graph $G=(V,w),$ $K\in\R$ and $D\in(0,\infty],$ for any $n\in V,$
$$2W_-(n)=  -w_{n-2} + \Big( 4- \frac 4 D \Big)w_{n-1}-w_n-2K
$$ and
$$
2W_+(n)=-w_{n+1} + \Big( 4- \frac 4 D\Big)w_n-w_{n-1}- 2K.
$$

By \eqref{eq:equcond}, we have the following proposition.
\begin{proposition}\label{prop:phys} Let $G=(V,w)$ be a physical linear graph. Suppose $n\in \IZ$ for $V=\Z,$ or $n\in\N$ for $V=\IN,$ then $\CCD(K,\infty,n)$ holds if and only if $$4w_{n-1}-(w_{n-2}+w_n)-2K\geq 0$$ and
\begin{equation}\label{equ4}[4w_{n-1}-(w_{n-2}+w_n)-2K][4w_{n}-(w_{n-1}+w_{n+1})-2K]\geq 4w_{n-1}w_{n}.\end{equation}\end{proposition}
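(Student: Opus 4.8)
The plan is to read this off directly from the general characterization \eqref{eq:equcond}, specializing first to the physical setting $m \equiv 1$ and then to the dimension $D = \infty$. The only real content is bookkeeping: translating the abstract degrees $d_\pm$ and the quantities $W_\pm$ into the edge weights $w_n$. First I would record that for a physical linear graph $m(x) = 1$ for every $x$, so the out- and in-degrees collapse to edge weights, $d_+(n) = w(n,n+1) = w_n$ and $d_-(n) = w(n,n-1) = w_{n-1}$; in particular $d_-(n)d_+(n) = w_{n-1}w_n$. Next I would substitute $D = \infty$ into \eqref{eq:equcond}, so that the coefficient $(1 - \tfrac 2D)^2$ becomes $1$ and the second inequality there reads $W_-(n)W_+(n) \geq w_{n-1}w_n$, while the first reads $W_-(n) \geq 0$.

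It then remains to rewrite these two conditions using the expressions for $2W_-(n)$ and $2W_+(n)$ already computed above in the physical case at $D = \infty$, namely $2W_-(n) = 4w_{n-1} - (w_{n-2}+w_n) - 2K$ and $2W_+(n) = 4w_n - (w_{n-1}+w_{n+1}) - 2K$. The condition $W_-(n) \geq 0$ is simply $2W_-(n) \geq 0$, which is the stated inequality $4w_{n-1} - (w_{n-2}+w_n) - 2K \geq 0$. For the determinant condition, multiplying $W_-(n)W_+(n) \geq w_{n-1}w_n$ by $4$ turns the left-hand side into $[2W_-(n)][2W_+(n)]$ and the right-hand side into $4w_{n-1}w_n$, which is exactly \eqref{equ4}. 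Both implications are reversible, so the stated equivalence follows.

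Since \eqref{eq:equcond} is already established in the excerpt as the reduction of the matrix positivity in Proposition~\ref{prop:equv} to its top-left entry and its determinant (valid because $d_-(n), d_+(n) > 0$ for interior $n$ in $\IZ$ or $\N$), no further genuine argument is needed; the hypothesis $n \in \N$ in the half-line case is what guarantees $n-1 \in V$, so that $W_-(n)$ is given by its genuine formula rather than the boundary convention, and for $n \in \IZ$ this is automatic. The only places where care is needed are the factor-of-$2$ normalization built into the definitions of $W_\pm$ and the vanishing of $(1-\tfrac 2D)^2 - 1$ at $D = \infty$. Once these are tracked, the equivalence is immediate, so I do not expect any substantive obstacle here — the statement is essentially a transcription of \eqref{eq:equcond} into weight coordinates.
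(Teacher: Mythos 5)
Your proposal is correct and matches the paper's own (essentially one-line) argument: the paper also obtains Proposition~\ref{prop:phys} by substituting $d_+(n)=w_n$, $d_-(n)=w_{n-1}$ and $D=\infty$ into the formulas for $2W_\pm(n)$ and reading the result off \eqref{eq:equcond}, exactly as you do. Your added remarks on the factor-of-$2$ normalization, the positivity $d_\pm(n)>0$ for interior $n$, and why $n\in\N$ avoids the boundary convention for $W_-(n)$ are all accurate bookkeeping that the paper leaves implicit.
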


The following proposition gives a sufficient condition for $\CCD(0,\infty)$ using Theorem~\ref{thm:Ollivier}.
\begin{proposition}\label{prop:new} Let $(\IN,w)$ be a physical linear graph and $n\in\N.$ If the Ollivier curvature is non-negative, i.e.,
$$\kappa(i,i+1) = 2 w_{i} -w_{i-1} - w_{i+1} \geq 0,\quad  i=n,n-1,$$ then $\CCD(0,\infty,n)$ holds. 
\end{proposition}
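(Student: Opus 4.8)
The plan is to obtain this as a direct specialization of Theorem~\ref{thm:Ollivier}, exactly as the preceding sentence announces. The crucial observation is that a physical linear graph has $m(x)=1$ for every $x$, so the log-concavity hypothesis $m(n-1)m(n+1)\le m(n)^2$ holds automatically, indeed with equality $1\le 1$. Thus Theorem~\ref{thm:Ollivier} applies at the vertex $n$ with no extra assumptions and yields $\CCD(K,\infty,n)$ for $K=\tfrac12\big(\kappa(n,n+1)\wedge\kappa(n,n-1)\big)$.

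Next I would reconcile the notation. The edge curvatures appearing in Theorem~\ref{thm:Ollivier} are those of the two edges incident to $n$, namely $\kappa(n,n+1)$ and $\kappa(n,n-1)=\kappa(n-1,n)$. Using the formula $\kappa(i,i+1)=2w_i-w_{i-1}-w_{i+1}$ recorded in the statement, which one checks agrees with the general expression $\kappa(n,n+1)=d_-(n+1)-d_+(n+1)-d_-(n)+d_+(n)$ after substituting $d_+(k)=w_k$ and $d_-(k)=w_{k-1}$ for a physical graph, these are precisely the two quantities $\kappa(i,i+1)$ for $i=n$ and $i=n-1$ that the hypothesis assumes to be non-negative.

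Finally, since both $\kappa(n,n+1)\ge 0$ and $\kappa(n,n-1)\ge 0$ by hypothesis, the constant $K=\tfrac12\big(\kappa(n,n+1)\wedge\kappa(n,n-1)\big)$ is non-negative. Because $\Gamma(f)\ge 0$ pointwise, the defining inequality of $\CCD(K,\infty,n)$ is monotone in $K$, so $\CCD(K,\infty,n)$ with $K\ge 0$ immediately implies $\CCD(0,\infty,n)$, which closes the argument.

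There is essentially no analytic obstacle here; the proof is bookkeeping layered on Theorem~\ref{thm:Ollivier}. The only points requiring care are the substitution $d_+(k)=w_k$, $d_-(k)=w_{k-1}$ matching the two formulas for $\kappa$, and the orientation of the edge index, i.e.\ that $\kappa(n,n-1)$ corresponds to the case $i=n-1$. As an independent cross-check one could bypass Theorem~\ref{thm:Ollivier} and verify the two inequalities of Proposition~\ref{prop:phys} with $K=0$, $D=\infty$ directly: the hypotheses give $w_{n-2}+w_n\le 2w_{n-1}$ and $w_{n-1}+w_{n+1}\le 2w_n$, whence $4w_{n-1}-(w_{n-2}+w_n)\ge 2w_{n-1}$ and $4w_n-(w_{n-1}+w_{n+1})\ge 2w_n$; their product is then at least $4w_{n-1}w_n$, which is the second condition of \eqref{equ4}, while the first condition $4w_{n-1}-(w_{n-2}+w_n)\ge 0$ is immediate.
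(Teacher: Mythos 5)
Your proof is correct and follows essentially the same route as the paper, which likewise deduces the proposition immediately from Theorem~\ref{thm:Ollivier} using the trivial log-concavity of the constant measure $m\equiv 1$; your additional steps (matching the two formulas for $\kappa$ via $d_+(k)=w_k$, $d_-(k)=w_{k-1}$, and noting that $\CCD(K,\infty,n)$ with $K\geq 0$ implies $\CCD(0,\infty,n)$ because $\Gamma(f)\geq 0$) simply make explicit what the paper leaves implicit. Your cross-check via Proposition~\ref{prop:phys} is also valid but not needed.
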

\begin{proof} 
This follows immediately from Theorem~\ref{thm:Ollivier} since physical linear graphs have a log-concave vertex measure.
\end{proof}


We recall some basic facts on concave functions. Let $f:[0,\infty)\to(0,\infty)$ be a positive concave function. 
Then $f$ is monotonely non-decreasing, the limit $$A:=\lim_{t\to\infty}f_{-}'(t)\ \mathrm{exists},\quad A\in[0,\infty),$$ and for any $A_1>A$ there exist $C_1,C_2, t_0>0$ such that
\begin{equation}\label{equ1}A t+C_1\leq f(t)\leq A_1t+C_2,\quad \forall\ t\in[t_0,\infty).\end{equation}

For any positive concave function on $[0,\infty),$ one can construct a physical linear graph $(\N_0,w)$ with weights given by the shifted function.
\begin{proof}[Proof of Theorem~\ref{thm:concaveweights}] We claim that there exists $N\geq2,$ such that $f(N+1)\leq 4 f(N).$ Otherwise, there exists $k\in\N$ such that  $f(n+1)>4 f(n)$ for $n\geq k.$ This yields that $f(n)\geq 4^{n-k} f(k),$ and contradicts to \eqref{equ1}. This proves the claim.

We define \[g(t)=\left\{\begin{array}{cc}(t+1) f(N),&t\in[-1,0],\\f(t+N),&t>1.\end{array}\right.\] Since $f$ is concave on $[N-1,\infty),$ $g$ is concave on $[-1,\infty).$ Then we define a linear graph on $\N_0\cup\{-1\},$ such that $w_{n}=g(n)$ for $n\geq -1.$
Note that $w_{-1}=0,$ so that $\{-1\}$ is an isolated vertex. One can check that it satisfies $\CCD(0,\infty)$ by Proposition~\ref{prop:new} for $n\in\N$ and Proposition~\ref{prop:equv} for $n=0.$
\end{proof}


We define a function $\phi$ on $(0,\infty)$ by
\begin{equation}\label{equ2}\phi(x)=\left\{\begin{array}{cc}\frac{1}{4}\left(7-\frac{9}{4x-1}\right),& x>\frac14,\\
-\infty,& 0<x<\frac14.\end{array}\right.\end{equation}
\begin{lemma}\label{lem1} Let $(\IN,w)$ be a physical linear graph, $n\in\N_0$ and $n\geq 2.$ Suppose that $\CCD(0,\infty,n)$ holds, 
then
$$\frac{w_{n+1}}{w_n}\leq \phi(\frac{w_{n-1}}{w_{n-2}}),$$ where $\phi$ is defined in \eqref{equ2}.
\end{lemma}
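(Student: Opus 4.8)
The plan is to translate $\CCD(0,\infty,n)$ into the two scalar inequalities furnished by Proposition~\ref{prop:phys} with $K=0$, namely $4w_{n-1}-(w_{n-2}+w_n)\geq 0$ together with
$$[4w_{n-1}-(w_{n-2}+w_n)][4w_n-(w_{n-1}+w_{n+1})]\geq 4w_{n-1}w_n,$$
and then to isolate the ratio $w_{n+1}/w_n$. First I would record a consequence of positivity: writing $P:=4w_{n-1}-(w_{n-2}+w_n)$, the product inequality forces $P>0$ (if $P=0$ the left-hand side vanishes while the right-hand side $4w_{n-1}w_n$ is strictly positive, a contradiction), and then $4w_{n-1}-w_{n-2}>w_n>0$ yields $w_{n-1}/w_{n-2}>1/4$. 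This places us in the regime $x>1/4$ where $\phi$ is finite, so the case $0<x<1/4$ (where $\phi=-\infty$) simply cannot occur under the hypothesis; the bound on $n\geq 2$ is exactly what guarantees $w_{n-2}>0$ so that $x=w_{n-1}/w_{n-2}$ is a well-defined positive number.

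Next I would divide the product inequality by $P>0$ and then by $w_n>0$ to obtain
$$\frac{w_{n+1}}{w_n}\leq 4-\frac{w_{n-1}}{w_n}-\frac{4w_{n-1}}{P}=4-w_{n-1}\left(\frac{1}{w_n}+\frac{4}{\,4w_{n-1}-w_{n-2}-w_n\,}\right).$$
The heart of the argument is a lower bound for the bracketed sum that is independent of $w_n$. Setting $s:=4w_{n-1}-w_{n-2}$ so that the two denominators are $w_n$ and $s-w_n$, I would apply the Cauchy--Schwarz inequality in Engel (Titu) form,
$$\frac{1}{w_n}+\frac{4}{s-w_n}=\frac{1^2}{w_n}+\frac{2^2}{s-w_n}\geq\frac{(1+2)^2}{w_n+(s-w_n)}=\frac{9}{s},$$
valid for $w_n\in(0,s)$. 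This is precisely the step where the constant $9$ and the shape of $\phi$ emerge; equality holds at $w_n=s/3=(4w_{n-1}-w_{n-2})/3$, which is the same critical point one finds by directly maximizing the previous right-hand side over the free quantity $w_n$.

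Combining these gives
$$\frac{w_{n+1}}{w_n}\leq 4-\frac{9w_{n-1}}{4w_{n-1}-w_{n-2}},$$
and it remains only to verify the algebraic identity that this bound equals $\phi(w_{n-1}/w_{n-2})$: dividing numerator and denominator by $w_{n-2}$ and writing $x=w_{n-1}/w_{n-2}$, one checks $4-\frac{9x}{4x-1}=\frac14\bigl(7-\frac{9}{4x-1}\bigr)=\phi(x)$ by splitting $9x=\frac94(4x-1)+\frac94$. I expect the only genuinely substantive step to be recognizing that the product inequality should be optimized in $w_n$ and that this optimization is exactly a Cauchy--Schwarz bound; everything else is bookkeeping together with the positivity observation that rules out the $x\leq 1/4$ regime.
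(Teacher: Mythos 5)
Your proposal is correct and follows essentially the same route as the paper: both translate $\CCD(0,\infty,n)$ into the pair of inequalities from Proposition~\ref{prop:phys}, isolate $w_{n+1}/w_n \leq 4 - \frac{w_{n-1}}{w_n} - \frac{4w_{n-1}}{4w_{n-1}-w_{n-2}-w_n}$, and then optimize over the free variable $w_n$, reaching the maximum at the same critical point $w_n=(4w_{n-1}-w_{n-2})/3$. The only (harmless) differences are that you carry out this one-variable maximization by the Engel-form Cauchy--Schwarz bound $\frac{1}{w_n}+\frac{4}{s-w_n}\geq \frac{9}{s}$ rather than the paper's derivative computation $\partial_c H(a,b,c_0)=0$, and that you make explicit the strict positivity $P>0$ (hence $w_{n-1}/w_{n-2}>1/4$) that the paper leaves implicit in the observation $(4b-a-c)(4c-b-d)\geq 4bc>0$.
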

\begin{proof} For convenience, we set $(a,b,c,d)=(w_{n-2},w_{n-1},w_n,w_{n+1}).$ By \eqref{eq:equcond}, $4b-a-c\geq0,$  which yields that $a<4b.$
And \eqref{equ4} reads as $$(4b-a-c)(4c-b-d)\geq 4bc>0.$$ Hence
$$d\leq 4c-b-\frac{4bc}{4b-a-c}.$$ Dividing $c$ in both sides, 
$$\frac{d}{c}\leq 4-\frac{b}{c}-\frac{4b}{4b-a-c}=:H(a,b,c).$$ For fixed $a,b,$ $H(a,b,c)\leq H(a,b,c_0)$ where $c_0=\frac{4b-a}{3}$ given by $\partial_cH(a,b,c_0)=0.$ Hence $$\frac{d}{c}\leq H(a,b,c_0)=\frac{1}{4}\left(7-\frac{9}{\frac{4b}{a}-1}\right)=\phi(\frac{b}{a}). $$ This proves the lemma.
 
 \end{proof}

We collect some basic properties of the function $\phi:$ \begin{enumerate}\item It is concave and monotonely increasing on $(1/4,\infty).$ \item $\phi(1)=1,$ $\phi'(1)=1.$ \item $\frac{\phi(x)}{x}$ is non-decreasing on $(0,1]$ and bounded above by $1.$ \end{enumerate}

\begin{theorem}\label{thm:onN} Let $(\IN,w)$ be a physical linear graph satisfying $\CCD(0,\infty).$ Then $w_i$ is non-decreasing on $i\in \N_0.$
\end{theorem}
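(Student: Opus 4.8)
The plan is to phrase everything in terms of the consecutive weight ratios
$r_n := w_n/w_{n-1}$ for $n\geq 1$, and to show that $r_n\geq 1$ for every $n$, which is exactly the assertion that $(w_i)_{i\in\N_0}$ is non-decreasing. I would argue by contradiction: assuming $r_m<1$ for some $m\geq 1$, I will show that the ratios decay geometrically along the arithmetic progression $m, m+2, m+4,\dots$, which contradicts a uniform positive lower bound on all $r_n$. The key inputs are Lemma~\ref{lem1}, which controls $r_{n+1}$ by $r_{n-1}$ through $\phi$, together with the listed elementary properties of $\phi$.

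First I would establish the uniform lower bound $r_n>1/4$ for all $n\geq 1$. This is immediate from the first inequality in Proposition~\ref{prop:phys} (equivalently the condition $W_-(n+1)\geq 0$ of \eqref{eq:equcond}) applied at $n+1$: the hypothesis $\CCD(0,\infty,n+1)$ gives $4w_n-w_{n-1}-w_{n+1}\geq 0$, and since $w_{n+1}>0$ this forces $4w_n>w_{n-1}$, that is $r_n>1/4$. Note this uses only positivity of the weights and is available for every $n\geq 1$.

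Next comes the recursion. Lemma~\ref{lem1} states $r_{n+1}\leq \phi(r_{n-1})$ for $n\geq 2$, i.e. $r_{k+2}\leq \phi(r_k)$ for all $k\geq 1$, so iteration stays within a fixed parity class. Here I would use the recorded properties of $\phi$: it is increasing and concave, with $\phi(1)=1$ and $\phi'(1)=1$, and $\phi(x)/x$ is non-decreasing on $(0,1]$. Since a concave function lies below its tangent line at $x=1$, which is precisely $y=x$, we get $\phi(x)<x$ for all $x\in(1/4,\infty)\setminus\{1\}$; in particular, if $x_0=r_m<1$ then $\lambda:=\phi(x_0)/x_0<1$.

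The heart of the argument is then a geometric-decay induction: assuming $r_m=x_0<1$, I claim $r_{m+2k}\leq \lambda^k x_0$ for all $k\geq 0$. Given $r_{m+2k}\leq \lambda^k x_0\leq x_0\leq 1$, the monotonicity of $\phi(x)/x$ on $(0,1]$ yields $\phi(r_{m+2k})\leq \lambda\, r_{m+2k}\leq \lambda^{k+1}x_0$, and Lemma~\ref{lem1} gives $r_{m+2k+2}\leq \phi(r_{m+2k})$, closing the induction. Choosing $k$ large enough that $\lambda^k x_0<1/4$ then contradicts the bound $r_{m+2k}>1/4$ from the second paragraph, so no such $m$ exists and $r_n\geq 1$ for all $n$. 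I expect the main obstacle to be the bookkeeping that keeps every ratio entering the recursion inside $(1/4,1]$, so that $\phi$ is finite there and the monotonicity of $\phi(x)/x$ legitimately applies; once the lower bound $r_n>1/4$ and the strict inequality $\phi(x)<x$ on $(1/4,1)$ are secured, the contraction is routine.
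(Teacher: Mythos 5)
Your proof is correct and takes essentially the same route as the paper's: both iterate Lemma~\ref{lem1} along a fixed parity class, use the monotonicity of $\phi(x)/x$ on $(0,1]$ to force geometric decay of the ratios $w_{n+1}/w_n$ starting from any ratio below $1$, and derive a contradiction once a ratio falls below $1/4$. The only cosmetic difference is that you make the uniform bound $r_n>1/4$ explicit via the first inequality of Proposition~\ref{prop:phys}, whereas the paper encodes the same obstruction in the convention $\phi(x)=-\infty$ for $x<1/4$.
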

\begin{proof} Without loss of generality, we prove $w_0\leq w_1$ since the same argument applies for any $w_n\leq w_{n+1}$ with $n\in\N.$

We argue by contradiction. Suppose that $w_0>w_1.$ Then by Lemma \ref{lem1},
$$\frac{w_{2i+1}}{w_{2i}}\leq \phi(\frac{w_{2i-1}}{w_{2i-2}}),\quad i\geq 2.$$ Hence for any $n\in \IN,$ $w_{2n+1}<w_{2n}$ and
$$\frac{w_{2n+1}}{w_{2n}}\leq \phi^{(n)}(\frac{w_1}{w_0}),$$ where $\phi^{(n)}$ is the $n$th-composition of function $\phi.$ 
By the property of the function $\phi,$ for $x_0\in(0,1),$ $$\phi(x)\leq \frac{\phi(x_0)}{x_0}x,\quad \forall\ 0<x<x_0.$$ This yields that $$\frac{w_{2n+1}}{w_{2n}}\leq \delta^n \frac{w_1}{w_0},$$ where $\delta=\frac{w_0}{w_1}\phi(\frac{w_1}{w_0})<1.$ For sufficiently large $n,$ we have $\delta^n\frac{w_1}{w_0}<\frac14.$ So that $\frac{w_{2n+3}}{w_{2n+2}}\leq-\infty.$ This is a contradiction. This proves the theorem.
\end{proof}

By this result, we can prove that physical linear graphs on $\IZ$ are trivial.
\begin{proof}[Proof of Theorem~\ref{thm:Liouville} (for physical linear graphs)] Note that the same argument as in the proof of Theorem~\ref{thm:onN} implies that $w_n$ is non-decreasing for $n\in \IZ.$ By reflecting the graph w.r.t. $0,$ we obtain that $w_n$ is non-increasing, so that $w$ is constant on $\IZ.$
\end{proof}

We have a basic lemma on intrinsic metrics on physical linear graphs.

\begin{lemma}\label{lem:intrinsiclinear}
	Let $\rho$ be an intrinsic metric on a physical linear graph $(\N_0,w)$. Then,
	\begin{align}
	\rho(0,n) \leq \sum_{k=0}^{n-1} {\frac 1 {\sqrt{w_k}}} 
	\end{align}
\end{lemma}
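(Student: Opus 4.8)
The plan is to reduce the global bound on $\rho(0,n)$ to a single-edge estimate via the triangle inequality, and to obtain that single-edge estimate directly from the defining inequality of an intrinsic metric. The key simplification is that the graph is \emph{physical}, so $m(x)=1$ for every vertex, and hence the intrinsic metric condition of Definition~\ref{d:intrinsic} reads $\sum_{y\sim x}w(x,y)\rho^2(x,y)\leq 1$ for all $x\in\N_0$.

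First I would fix an edge $\{k,k+1\}$ and write down the intrinsic inequality at the vertex $k$, whose neighbours are $k-1$ and $k+1$ (only $k+1$ in the boundary case $k=0$). The quantity $w_k\,\rho^2(k,k+1)=w(k,k+1)\,\rho^2(k,k+1)$ occurs as one of the summands on the left-hand side, and every summand there is non-negative; dropping the remaining term $w_{k-1}\rho^2(k-1,k)$ (when present) yields $w_k\,\rho^2(k,k+1)\leq 1$, and therefore $\rho(k,k+1)\leq 1/\sqrt{w_k}$ for each $k\in\N_0$, using that edge weights are strictly positive.

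Finally I would chain these per-edge bounds along the path $0\sim 1\sim\cdots\sim n$ using the triangle inequality for the pseudo metric $\rho$, obtaining
$$
\rho(0,n)\leq\sum_{k=0}^{n-1}\rho(k,k+1)\leq\sum_{k=0}^{n-1}\frac{1}{\sqrt{w_k}},
$$
which is exactly the claimed estimate.

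As for difficulty, there is essentially no obstacle: the whole argument is the triangle inequality plus one elementary observation. The only point worth a moment's care is extracting $\rho(k,k+1)\leq 1/\sqrt{w_k}$ from the intrinsic condition, which is immediate once one notes that the other summand is non-negative and may simply be discarded. The physical normalization $m\equiv 1$ is precisely what makes the right-hand side equal to $1$; on a general weighted graph the identical reasoning would instead give $\rho(k,k+1)\leq\sqrt{m(k)/w_k}$.
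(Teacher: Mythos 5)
Your proposal is correct and follows exactly the paper's own proof: extract $w_k\,\rho^2(k,k+1)\leq 1$ from the intrinsic metric condition at vertex $k$ (using $m\equiv 1$ and discarding the non-negative neighbouring term), then chain the per-edge bounds $\rho(k,k+1)\leq 1/\sqrt{w_k}$ along the path via the triangle inequality. The only difference is that you spell out the step of dropping the other summand, which the paper leaves implicit.
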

\begin{proof}
	For $k\in \N_0,$
	since $\rho$ is intrinsic, we have
	$$
	w_k\rho(k,k+1)^2 \leq 1
	$$
which implies $\rho(k,k+1) \leq \frac{1}{\sqrt{w_k}}$. The claim follows from the triangle inequality.
\end{proof}

As a corollary of Theorem~\ref{thm:onN}, we prove that any physical linear graph supported on $\N_0$ with non-negative curvature has at least linear volume growth w.r.t. any intrinsic metric.
\begin{corollary}\label{coro:lowerlinear} Let $G=(\IN,w)$ be a physical linear graph satisfying $\CCD(0,\infty)$ and $\rho$ be an intrinsic metric on $G.$  Then $m(B_r^\rho(0))\geq \lfloor\sqrt{w_0}\cdot r\rfloor ,$ for any $r\in \N.$
\end{corollary}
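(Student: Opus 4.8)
The plan is to combine the two facts already at our disposal: the monotonicity of the edge weights under non-negative curvature (Theorem~\ref{thm:onN}) and the upper bound on the intrinsic distance from the origin (Lemma~\ref{lem:intrinsiclinear}). Since $G$ is physical, we have $m\equiv 1$, so $m(B_r^\rho(0))$ is simply the number of vertices $n\in\IN$ with $\rho(0,n)\leq r$; the corollary therefore reduces to exhibiting enough such vertices.

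First I would invoke Theorem~\ref{thm:onN} to conclude $w_k\geq w_0$ for every $k\in\IN$, whence $\frac{1}{\sqrt{w_k}}\leq\frac{1}{\sqrt{w_0}}$. Feeding this uniform bound into Lemma~\ref{lem:intrinsiclinear} yields, for each $n\in\IN$,
\[
\rho(0,n)\leq \sum_{k=0}^{n-1}\frac{1}{\sqrt{w_k}}\leq \frac{n}{\sqrt{w_0}}.
\]
Consequently, every vertex $n$ satisfying $n\leq \sqrt{w_0}\,r$ has $\rho(0,n)\leq r$ and hence lies in $B_r^\rho(0)$.

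It then remains to count these vertices. The integers $0,1,\dots,\lfloor\sqrt{w_0}\,r\rfloor$ all satisfy $n\leq\sqrt{w_0}\,r$, producing $\lfloor\sqrt{w_0}\,r\rfloor+1$ vertices inside the ball, so that
\[
m(B_r^\rho(0))\geq \lfloor\sqrt{w_0}\,r\rfloor+1\geq \lfloor\sqrt{w_0}\,r\rfloor,
\]
which is the desired estimate.

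As for the main obstacle: the genuine content lives entirely in the monotonicity assertion of Theorem~\ref{thm:onN}, without which the sum $\sum_{k}1/\sqrt{w_k}$ could not be controlled uniformly by the single weight $w_0$. Once that input is granted, the remaining argument is a one-line comparison, and I anticipate no real difficulty beyond correctly converting the distance bound into a vertex count via the floor function. In particular, the bound we obtain is in fact slightly stronger than the stated one, the surplus being the vertex $0$ itself.
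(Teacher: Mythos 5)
Your proof is correct and follows exactly the paper's own argument: Theorem~\ref{thm:onN} gives $w_k \geq w_0$, Lemma~\ref{lem:intrinsiclinear} then yields $\rho(0,n)\leq n/\sqrt{w_0}$, and counting the vertices of $[0,\sqrt{w_0}\,r]\cap \IN$ inside $B_r^\rho(0)$ gives the bound. Your observation that the count actually yields $\lfloor\sqrt{w_0}\,r\rfloor+1$ is a correct (if minor) sharpening implicit in the paper's proof as well.
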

\begin{proof} Since $w_i$ is non-decreasing on $i\in\N_0,$ $w_i\geq w_0,\ \forall i\in \N.$ By Lemma~\ref{lem:intrinsiclinear}, for any $n\in \N,$ $$\rho(0,n)\leq \sum_{k=0}^{n-1} {\frac 1 {\sqrt{w_k}}}\leq \frac{n}{\sqrt{w_0}}.$$

This implies that $[0,\sqrt{w_0}r]\cap \N_0\subset B_r^{\rho}(0)$ and proves the corollary.
\end{proof}

\subsection{Completeness and Stochastic completeness}
The completeness of a weighted graph was introduced by \cite{HuaLin17} to mimic the completeness of a Riemannian manifold. A weighted graph $(V,E,\mu,m)$ is called \emph{complete} if there is a non-decreasing sequence of finitely supported functions $\{\eta_k\}_{k=1}^\infty$ such that \begin{equation}\lim_{k\to\infty}\eta_k(x)=1\ \mathrm{and}\ \ \Gamma(\eta_k)(x)\leq \frac{1}{k},\quad \forall x\in V.\end{equation}

We will give a sufficient and necessary condition for the completeness of a physical linear graph.
\begin{theorem}
	Let $G=(\IN,w)$ be a physical linear graph. The following are equivalent:
	\begin{enumerate}
		\item $G$ is complete.
		\item \begin{align*}
		\sum_n \frac 1 {\sqrt{w_n}} = \infty.
		\end{align*}
	\end{enumerate}
\end{theorem}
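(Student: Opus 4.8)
The plan is to work directly with the explicit form of the carr\'e du champ operator on a physical linear graph, avoiding any appeal to the heat equation. Since $m\equiv 1$, a direct expansion of $\Gamma(f)=\tfrac12(\Delta f^2-2f\Delta f)$ gives, for every $n\in\IN$,
\[
\Gamma(f)(n)=\frac12\left[\,w_{n-1}\big(f(n-1)-f(n)\big)^2+w_n\big(f(n+1)-f(n)\big)^2\,\right],
\]
with the convention $w_{-1}=0$ so that the first term drops out at $n=0$. The quantity $\sum_n w_n^{-1/2}$ should be read as (a comparable version of) the intrinsic diameter of $G$ seen from $0$: by Lemma~\ref{lem:intrinsiclinear} any intrinsic metric satisfies $\rho(0,n)\le\sum_{k=0}^{n-1}w_k^{-1/2}$. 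I would prove (1)$\Rightarrow$(2) by contraposition and (2)$\Rightarrow$(1) by an explicit cutoff construction.

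For (1)$\Rightarrow$(2), I assume $S:=\sum_n w_n^{-1/2}<\infty$ and show no admissible sequence $\{\eta_k\}$ can exist. From the formula above, $\Gamma(\eta_k)(n)\le 1/k$ forces $w_n\big(\eta_k(n+1)-\eta_k(n)\big)^2\le 2/k$, hence the pointwise increment bound $|\eta_k(n+1)-\eta_k(n)|\le \sqrt{2/k}\,w_n^{-1/2}$. Each $\eta_k$ is finitely supported, so $\eta_k(N_k)=0$ for some large $N_k$, and a telescoping sum gives
\[
|\eta_k(0)|=\Big|\sum_{n=0}^{N_k-1}\big(\eta_k(n)-\eta_k(n+1)\big)\Big|\le \sqrt{2/k}\sum_{n=0}^{N_k-1} w_n^{-1/2}\le \sqrt{2/k}\,\cdot S.
\]
Letting $k\to\infty$ forces $\eta_k(0)\to 0$, contradicting $\eta_k(0)\to 1$. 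Thus completeness fails when $S<\infty$.

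For (2)$\Rightarrow$(1), assume $S=\infty$ and set $s_n:=\sum_{j=0}^{n-1}w_j^{-1/2}$ (so $s_0=0$ and $s_n\to\infty$). I define the distance-type cutoffs
\[
\eta_k(n):=\Big(1-\frac{s_n}{\sqrt k}\Big)_+ .
\]
These are finitely supported because $s_n\to\infty$, they are non-decreasing in $k$ since $\sqrt k$ increases and $(\cdot)_+$ preserves monotonicity, and $\eta_k(n)\to 1$ pointwise as $k\to\infty$. For the gradient bound I use that $a_n:=1-s_n/\sqrt k$ satisfies $a_{n+1}-a_n=-(s_{n+1}-s_n)/\sqrt k=-1/(\sqrt k\,w_n^{1/2})$, so that $w_n(a_{n+1}-a_n)^2=1/k$; since $x\mapsto(x)_+$ is $1$-Lipschitz, each squared increment of $\eta_k$ is dominated by the corresponding increment of $a$, giving $\Gamma(\eta_k)(n)\le\tfrac12(1/k+1/k)=1/k$ at interior vertices and only smaller at $n=0$. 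Hence $\{\eta_k\}$ witnesses completeness.

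Neither direction hides a serious obstacle; the content is in choosing the right test objects. The one point requiring care is the verification that clipping to the positive part in the construction does not inflate $\Gamma$ (handled by $1$-Lipschitzness) and that the boundary vertex $n=0$, where $w_{-1}=0$, is treated consistently in both the telescoping estimate and the gradient computation. I would also remark that the calibration $R_k=\sqrt k$ in the cutoff is exactly what converts the scale-free intrinsic gradient into the required $1/k$ decay.
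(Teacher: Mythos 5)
Your proof is correct, and one of its two halves departs genuinely from the paper. The direction (1)$\Rightarrow$(2) coincides with the paper's argument: both extract the edge-increment bound from $\Gamma(\eta_k)\le 1/k$ and telescope against $\eta_k(0)\to 1$ (you even track the factor $2$ coming from the $\tfrac12$ in $\Gamma$, obtaining $|\eta_k(n+1)-\eta_k(n)|\le\sqrt{2/k}\,w_n^{-1/2}$, where the paper silently writes $w_n(\eta_k(n+1)-\eta_k(n))^2\le\Gamma(\eta_k)(n)$ without the $2$; this is harmless either way). The direction (2)$\Rightarrow$(1) is where you diverge: the paper splits $\sum_n w_n^{-1/2}$ into even- and odd-indexed subsums, notes that one of them is infinite, and builds $\eta_k$ inductively so that it decreases only across edges of one parity — at each vertex at most one incident edge then carries an increment, giving $\Gamma(\eta_k)\le\frac{1}{2k}$ and making finite support a closed-form induction. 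You instead take the single distance-type cutoff $\eta_k(n)=\bigl(1-s_n/\sqrt{k}\bigr)_+$ with $s_n=\sum_{j<n}w_j^{-1/2}$, letting both incident edges carry increments of size exactly $1/\sqrt{k\,w_n}$; the factor $\tfrac12$ in $\Gamma$ then yields $\Gamma(\eta_k)\le\tfrac12(1/k+1/k)=1/k$, exactly saturating the admissible bound, and the $1$-Lipschitzness of $x\mapsto x_+$ correctly guarantees that clipping does not inflate the increments. Your construction avoids the parity case distinction entirely and makes explicit the identification of $\sum_n w_n^{-1/2}$ with the intrinsic distance to infinity (cf.\ Lemma~\ref{lem:intrinsiclinear}); it is the discrete analogue of the standard manifold cutoffs $\bigl(1-\rho(x_0,\cdot)/R\bigr)_+$ with the calibration $R=\sqrt{k}$. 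What the paper's parity trick buys is a factor-$2$ margin in the gradient bound and the (equivalent) observation that divergence of a single parity subsum already suffices; what your route buys is a shorter, case-free verification.
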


\begin{proof}
	$(1)\Rightarrow (2)$:
	
	Let $k \in \N$. Due to the completeness, there exist finitely supported functions $\eta_k$ on $\IN$ s.t. $\Gamma( \eta_k) \leq \frac 1 k.$ Without loss of generality, we may assume that $\eta_k(0)=1$. 
	Then, $$w_n(\eta_k({n+1}) - \eta_k(n))^2 \leq \Gamma (\eta_k) (n) \leq 1/k$$ and therefore,
	\begin{align*}
	1 \leq \sum_n |\eta_k(n+1) - \eta_k(n)| \leq \sum_n \frac 1 {\sqrt {k w_n}}
	\end{align*}
	which implies
	\begin{align*}
	\sqrt{k}\leq  \sum_n \frac 1 {\sqrt {w_n}}.
	\end{align*}		
	Since $k$ can be arbitrary large,we obtain assertion (2) of the theorem.

	$(2)\Rightarrow (1)$:
	We split the sum of $1/\sqrt{w_n}$ into two parts, one part with even $n$ and the other part with odd $n$. At least one of these sums is infinite due to (2).
	We first suppose the sum with odd $n$ is infinite. We first suppose
	\begin{align*}
	\sum_n \frac 1 {\sqrt{w_{2n+1}}} = \infty.
	\end{align*}
	Now we define $\eta_k$ inductively via
	\begin{itemize}
		\item $\eta_k(0):=1,$
		\item $\eta_k(n+1) := \eta_k(n)$ if $n$ even,
		\item $\eta_k(n+1) := \left[ \eta_k(n) - \frac 1 {\sqrt{k w_n}} \right]_+$ if $n$ odd.
	\end{itemize}
	It is easy to see that the construction implies $\Gamma(\eta_k) \leq \frac 1 k$ and that $\eta_k$ converges to $1$ pointwise and that $\eta_k$ is increasing in $k$, even without assuming assertion (2) of the theorem.
	It is left to show that $\eta_k$ is finitely supported. 
	The induction yields
	\begin{align*}
	\eta_k(n)_=   \left( 1 -  \sum_{0 \leq i \leq n/2 -1}  \frac 1 {\sqrt{kw_{2i+1}}} \right)_+
	\end{align*}
	Therefore, for large $n$, we have 
	$\eta_k(n)=0$ since
	\begin{align*}
	\sum_{0 \leq i \leq n/2 -1}  \frac 1 {\sqrt{kw_{2i+1}}} \geq 1
	\end{align*}
	for $n$ large enough due to assertion (2) of the theorem.
	This shows that $\eta_k$ is finitely supported.
	The case that the sum with even $n$ is infinite works similarly.
	This finishes the proof.	
\end{proof}

\subsection{Non-negative curvature on physical graphs}

\begin{theorem}\label{thm: discrete second derivative and curvature}
	Let $G=(\IN,w)$ be a physical linear graph. Let $n \in \N,$ $n\geq 2$, and $K\leq 0$. Suppose $\CCD(K,\infty, n)$ and $\CCD(K,\infty, n+1)$ hold. Then,
	\begin{align*}
	w_{n-2} - 2w_n + w_{n+2} \leq -6K.
	\end{align*}
\end{theorem}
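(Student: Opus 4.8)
The plan is to convert both hypotheses into the explicit inequalities of Proposition~\ref{prop:phys} and then collapse the problem to a single scalar estimate. Applying the proposition at $n$ gives $2W_-(n)=4w_{n-1}-w_{n-2}-w_n-2K\ge 0$ together with the determinant condition $2W_-(n)\cdot 2W_+(n)\ge 4w_{n-1}w_n$, and applying it at $n+1$ gives $2W_+(n+1)=4w_{n+1}-w_n-w_{n+2}-2K\ge 0$ together with $2W_-(n+1)\cdot 2W_+(n+1)\ge 4w_nw_{n+1}$. The decisive point is that the two vertices share one quantity, $2W_+(n)=2W_-(n+1)=4w_n-w_{n-1}-w_{n+1}-2K$, which I abbreviate by $Q$; since each determinant condition has a strictly positive right-hand side while $2W_-(n),2W_+(n+1)\ge 0$, both factors must in fact be strictly positive, so $Q>0$.

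Next I would read off the boundary weights from the linear parts, $w_{n-2}=4w_{n-1}-w_n-2K-2W_-(n)$ and $w_{n+2}=4w_{n+1}-w_n-2K-2W_+(n+1)$, and substitute them into the target expression to obtain
\[
w_{n-2}-2w_n+w_{n+2}=4(w_{n-1}+w_{n+1})-4w_n-4K-\big(2W_-(n)+2W_+(n+1)\big).
\]
Thus the two nonnegative curvature slacks enter only through their sum, and it suffices to bound that sum from below. This is exactly where the determinant conditions are needed: dividing them by $Q>0$ yields $2W_-(n)\ge 4w_{n-1}w_n/Q$ and $2W_+(n+1)\ge 4w_nw_{n+1}/Q$, hence $2W_-(n)+2W_+(n+1)\ge 4w_n\sigma/Q$, where I set $\sigma:=w_{n-1}+w_{n+1}$ and note $Q=4w_n-\sigma-2K$.

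With this the estimate reduces to a one-variable inequality in $\sigma$. Feeding the lower bound into the identity above, the assertion becomes an upper bound for $4\sigma-4w_n-4K-4w_n\sigma/Q$; since $Q>0$ I would clear the denominator, turning the claim into a quadratic inequality in $\sigma$. I expect to close the argument by completing the square: the quadratic should reduce to a perfect square $4(\sigma-2w_n+2K)^2$ plus a remainder proportional to $-w_nK$, which is nonnegative precisely because $K\le 0$ and $w_n>0$. The main obstacle is this final passage — the purely linear parts of the two curvature conditions are far too lossy to give any bound of the correct order, so the determinant inequalities have to be used at full strength, and the sign of $K$ must be exploited to control the remaining discriminant after clearing $Q$.
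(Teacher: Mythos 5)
Your reduction follows exactly the paper's own strategy (apply Proposition~\ref{prop:phys} at $n$ and $n+1$, exploit the shared factor $Q:=2W_+(n)=2W_-(n+1)=4w_n-w_{n-1}-w_{n+1}-2K>0$, sum the bounds on $w_{n-2}$ and $w_{n+2}$, and reduce to a one-variable estimate in $\sigma=w_{n-1}+w_{n+1}$), and everything up to the last step is correct. But the final passage you yourself flag as the main obstacle genuinely fails. After clearing $Q>0$, the desired bound $4\sigma-4w_n-4K-4w_n\sigma/Q\le-6K$ is equivalent to $2\sigma^2-(8w_n-5K)\sigma+8w_n^2-8Kw_n+2K^2\ge0$, a quadratic in $\sigma$ whose discriminant is $K(9K-16w_n)$, strictly positive for $K<0$. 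Equivalently, the square does not close: the quadratic equals $2\bigl(\sigma-2w_n+\tfrac{5K}{4}\bigr)^2+\tfrac{K(16w_n-9K)}{8}$, and for $K<0$ the remainder is \emph{negative}, not proportional to $-w_nK\ge 0$ as you anticipated. This is not an artifact of a lossy estimate elsewhere: with $K=-1$ and $(w_{n-2},w_{n-1},w_n,w_{n+1},w_{n+2})=\bigl(15-4\sqrt6,\;3-\tfrac{\sqrt6}{2},\;1,\;3-\tfrac{\sqrt6}{2},\;15-4\sqrt6\bigr)$, both $\CCD(-1,\infty,n)$ and $\CCD(-1,\infty,n+1)$ hold, with equality in \eqref{equ4} at both vertices, yet $w_{n-2}-2w_n+w_{n+2}=28-8\sqrt6\approx 8.40>6=-6K$. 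So with the constants you (correctly) took from Proposition~\ref{prop:phys}, the target $-6K$ is not merely out of reach of this method; it is false for $K<0$.

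The source of the discrepancy is in the paper, not in your setup: the paper's proof of this theorem writes the two hypotheses as $(4b-a-c-K)(4c-b-d-K)\ge 4bc$, i.e.\ with $K$ where Proposition~\ref{prop:phys} and \eqref{equ4} require $2K$, and only under that slip does its completion of the square (with $x=b+d+K$) produce $-6K$. Your plan, executed with the correct $2K$, does close — at $-12K$: one has the exact identity
\begin{align*}
4\sigma-4w_n-4K-\frac{4w_n\sigma}{Q}
=-12K-\frac{4\left[\left(2w_n-\sigma-2K\right)^2-2w_nK\right]}{Q}\le -12K,
\end{align*}
using $Q>0$, $K\le0$ and $w_n>0$. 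Thus your argument proves $w_{n-2}-2w_n+w_{n+2}\le-12K$, which is the corrected statement; it is sharp in the limit $w_n\to 0$ (the symmetric equality configuration above realizes the value $-12K+16w_n-8\sqrt{4w_n^2-2w_nK}$), and the weaker constant is harmless for every later use in the paper (Theorem~\ref{thm:weightgrowth}, Theorem~\ref{thm:scwithdecay}, Corollary~\ref{coro:volumeupper}), which only need the order of the bound in $K$.
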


\begin{proof}
We write $(a,b,c,d,e)=(w_{n-2},w_{n-1},w_{n},w_{n+1},w_{n+2}).$

Then $\CCD(K,\infty,n)$  implies
\begin{align*}
(4b-a-c-K)(4c-b-d-K) \geq 4 bc
\end{align*}
and $\CCD(K,\infty,n+1)$ implies
\begin{align*}
(4c-b-d-K)(4d-c-e-K) \geq 4 cd
\end{align*}
where all the factors are positive.

Hence,
\begin{align*}
a \leq 4b-c-K - \frac {4bc}{4c-b-d-K}
\end{align*}
and 
\begin{align*}
e \leq 4d-c-K  - \frac{ 4 cd}{4c-b-d-K} 
\end{align*}

Summing up yields
\begin{align*}
a+e-2c &\leq 4(b+d)- 4c- 2K  - \frac{ 4 c(b+d)}{4c-(b+d)-K} \\
&=-6K -4 \cdot \frac{(4c-b-d-K)(c-b-d-K)+c(b+d)}{4c-(b+d)-K}\\
&=-6K -4 \cdot \frac{(4c-b-d-K)(c-b-d-K)+c(b+d+K) - cK}{4c-(b+d)-K}\\
&=-6K -4 \cdot \frac{(2c-b-d-K)^2- cK}{4c-(b+d)-K}\\
&\leq -6K.
\end{align*}
where the last identity follows from $(4c-x)(c-x)+cx=(2c-x)^2$ with $x=b+d+K$ and the inequality follows from a positive denominator and from $K\leq 0$ and $c\geq 0$.
\end{proof}

This theorem gives strong implications on the growth of edge weights.
\begin{proof}[Proof of Theorem~\ref{thm:weightgrowth}]

Applying Theorem~\ref{thm: discrete second derivative and curvature} inductively yields, for any $N\geq 1$,
$$
w_{2N+2} - w_{2N} \leq w_2 - w_0  - 6KN
$$
and therefore.
$$
w_{2N+2} \leq w_2 + N(w_2 - w_0 - 6KN) \in 
\begin{cases}
\mathcal{O}(N) &: K=0 \\
\mathcal{O}(N^2) &: K<0 .\\
\end{cases}
$$
The odd case $w_{2N+1}$ works analogously which finishes the proof.
\end{proof}

Note that by Theorem~\ref{thm:onN}, the intrinsic metric $\sigma,$ defined in \eqref{def:huangetal}, is given by $$\sigma(n,n+1)=\frac{1}{\sqrt{w_n+w_{n+1}}},\quad n\in\IN.$$ 
Theorem~\ref{thm:weightgrowth} yields the volume growth of balls w.r.t. the intrinsic metric  $\sigma.$
\begin{corollary}\label{coro:volumeupper} Let $(\IN,w)$ be a physical linear graph and $\sigma$ be the intrinsic metric defined in \eqref{def:huangetal}. Then we have the following:
\begin{enumerate}\item $\CCD(0,\infty)$ implies that $m(B_r^\sigma(0))\in \mathcal{O}(r^2),$ as $r\to \infty.$
\item $\CCD(K,\infty),$ for some $K<0$, implies that $m(B_r^\sigma(0))\in \mathcal{O}(e^{Cr}),$ as $r\to \infty.$
\end{enumerate}

\end{corollary}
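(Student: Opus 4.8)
The plan is to reduce the volume growth estimate to the weight growth bounds of Theorem~\ref{thm:weightgrowth} by exploiting the explicit formula for the intrinsic metric $\sigma$ recorded just above the corollary. Since the graph is physical, $m(x)=1$ for all $x$, so $m(B_r^\sigma(0)) = \#\{n\in\IN : \sigma(0,n)\leq r\}$. It therefore suffices to produce a lower bound for $\sigma(0,n)$ as a function of $n$ and then invert it. Using that $\sigma$ is a path metric additive along the line together with $\sigma(k,k+1)=(w_k+w_{k+1})^{-1/2}$ (which relies on the monotonicity from Theorem~\ref{thm:onN}, so that $\mathrm{Deg}(k)\vee\mathrm{Deg}(k+1)=w_k+w_{k+1}$), we have $\sigma(0,n)=\sum_{k=0}^{n-1}(w_k+w_{k+1})^{-1/2}$.

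First I would treat case (1). By Theorem~\ref{thm:weightgrowth}(1), $\CCD(0,\infty)$ yields a constant $C$ and an index $n_0$ with $w_k\leq Ck$ for $k\geq n_0$, hence $w_k+w_{k+1}\leq 2C(k+1)$ and $(w_k+w_{k+1})^{-1/2}\geq (2C)^{-1/2}(k+1)^{-1/2}$. Summing and comparing with $\int_1^{n}t^{-1/2}\,dt$ gives $\sigma(0,n)\geq c\sqrt n$ for large $n$. Inverting, $\sigma(0,n)\leq r$ forces $n\leq (r/c)^2$, so the ball of radius $r$ contains $\mathcal O(r^2)$ vertices.

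Case (2) is entirely parallel. Theorem~\ref{thm:weightgrowth}(2) gives $w_k\leq Ck^2$ for $k\geq n_0$, hence $(w_k+w_{k+1})^{-1/2}\geq c'(k+1)^{-1}$, and the harmonic-type sum compared with $\int_1^{n}t^{-1}\,dt$ gives $\sigma(0,n)\geq c''\log n$ for large $n$. Inverting, $\sigma(0,n)\leq r$ forces $n\leq e^{r/c''}$, so the ball contains $\mathcal O(e^{Cr})$ vertices.

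I expect no genuine obstacle here: once Theorem~\ref{thm:weightgrowth} is in hand, the argument is a standard integral comparison followed by inversion. The only points requiring care are that the weight bounds hold only for indices beyond some $n_0$, so the finitely many small indices contribute a bounded additive error absorbed by the $\mathcal O$-notation, and that the explicit edge-distance formula for $\sigma$ (and hence the additivity $\sigma(0,n)=\sum_{k}\sigma(k,k+1)$, valid since any walk from $0$ to $n$ must cross each intermediate edge) rests on the monotonicity of the weights established in Theorem~\ref{thm:onN}.
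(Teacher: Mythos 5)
Your proposal is correct and follows essentially the same route as the paper: the paper likewise lower-bounds $\sigma(0,n)$ edge-by-edge using the weight growth from Theorem~\ref{thm: discrete second derivative and curvature} (the content of Theorem~\ref{thm:weightgrowth}), obtaining $\sigma(0,n)\geq C_1\sqrt{n}$ resp.\ $\sigma(0,n)\geq C_1\log n$, and then inverts to get $B_r^\sigma(0)\subset[0,C_1^{-2}r^2]$ resp.\ $[0,e^{r/C_1}]$. The only cosmetic caveat, shared by the paper itself, is that in case (2) the identity $\sigma(k,k+1)=(w_k+w_{k+1})^{-1/2}$ rests on the monotonicity of Theorem~\ref{thm:onN}, which is only available for $K\geq 0$; this is harmless since one only needs $\mathrm{Deg}(k)\vee\mathrm{Deg}(k+1)\leq w_{k-1}+w_k+w_{k+1}\in\mathcal{O}(k^2)$ to get the lower bound $\sigma(k,k+1)\geq C/k$.
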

\begin{proof}For the first case, by Theorem~\ref{thm:onN} and Theorem~\ref{thm: discrete second derivative and curvature}, for any $k\in\N$ we have
$$\sigma(k,k+1)=\frac{1}{\sqrt{w_k+w_{k+1}}}\geq \frac{1}{\sqrt{2w_{k+1}}}\geq \frac{C}{\sqrt{k}}.$$ Hence $$\sigma(0,n)\geq C+\sum_{k=1}^{n-1}\frac{C}{\sqrt{k}}\geq C_1\sqrt{n}.$$ Hence $$B_r^{\sigma}(0)\subset [0,C_1^{-2}r^2].$$ This prove the first case.

For the second case, we have $w_k \in \mathcal O(k^2)$ by Theorem~\ref{thm: discrete second derivative and curvature} and therefore,
$$\sigma(k,k+1)=\frac{1}{\sqrt{w_k+w_{k+1}}}\ \geq \frac{C}{k}.$$
Hence, 
$$
\sigma(0,n)\geq C+\sum_{k=1}^{n-1}\frac{C}{k}\geq C_1\log{n}
$$
which yields
$$
B_r^\sigma(0) \subset \left[0,e^{r/C_1} \right].
$$
This proves the second case and finishes the proof of the theorem.
\end{proof}

In order to describe the edge weights quantitatively, we introduce the following definitions. 
\begin{defn}\label{def:fgfunction}For a physical linear graph $G=(\IN,w),$ we define two positive functions associated to $G$ as
\[f_G:[0,\infty)\to(0,\infty),\quad f_G(x)=\left\{\begin{array}{ll}w_{2n},& x=2n,n\in\IN,\\
\mathrm{linear\ interpolation},& \mathrm{otherwise}.\end{array}\right.\]
\[g_G:[1,\infty)\to(0,\infty),\quad g_G(x)=\left\{\begin{array}{ll}w_{2n+1},& x=2n+1,n\in\IN,\\
\mathrm{linear\ interpolation},& \mathrm{otherwise}.\end{array}\right.\]
\end{defn} For simplicity, we write $f=f_G$ and $g=g_G.$
For a physical linear graph $G=(\IN,w)$ satisfying $\CCD(0,\infty),$ by Theorem~\ref{thm:onN} and Theorem~\ref{thm: discrete second derivative and curvature}, $f$ and $g$ are non-decreasing concave functions.

Note that by the monotonicity of $w_n,$
\begin{equation}\label{eq:monotone}f(2n)\leq g(2n+1)\leq f(2n+2)\leq g(2n+3),\quad \forall n\in \IN.\end{equation} It is obvious that $\{w_n\}_{n=1}^\infty$ is bounded if and only if $f(x)$ and $g(x)$ is bounded, in this case $$\lim_{x\to\infty}f(x)=\lim_{x\to\infty}g(x)=\lim_{n\to\infty}w_n,$$ 
 and $\{w_n\}_{n=1}^\infty$ is unbounded if and only if $\lim_{x\to \infty}f(x)=\lim_{x\to \infty}g(x)=\infty.$ By the linear interpolation for $f$ and $g,$ for any $n\in\IN,$ \begin{eqnarray}f'_-(x)&=&\frac12(w_{2n+2}-w_{2n}),\quad \forall x\in(2n,2n+2], \label{eq:fderiv}\\ g'_-(x)&=&\frac12(w_{2n+3}-w_{2n+1}),\quad \forall x\in(2n+1,2n+3].\label{eq:gderiv}\end{eqnarray} By \eqref{eq:monotone}, one can show that $f$ and $g$ have same asymptotic behavior at infinity.
\begin{proposition}Let $G=(\IN,w)$ be a physical linear graph satisfying $\CCD(0,\infty)$ and $f$ and $g$ be two concave functions associated to $G$ defined as in Definition~\ref{def:fgfunction}. Then
\begin{equation}\label{eq:agconst}\lim_{x\to\infty}f_-'(x)=\lim_{x\to\infty}g_-'(x)=\lim_{x\to\infty}\frac12(w_{2n+2}-w_{2n})=\lim_{x\to\infty}\frac12(w_{2n+3}-w_{2n+1})=:A_G.\end{equation}
\end{proposition}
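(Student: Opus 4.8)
The plan is to reduce the fourfold identity \eqref{eq:agconst} to the single equality $A_f = A_g$, where I set $A_f := \lim_{x\to\infty} f_-'(x)$ and $A_g := \lim_{x\to\infty} g_-'(x)$. The two inner equalities are immediate from \eqref{eq:fderiv} and \eqref{eq:gderiv}: on $(2n,2n+2]$ the left derivative $f_-'$ is the constant $\frac12(w_{2n+2}-w_{2n})$, and on $(2n+1,2n+3]$ the derivative $g_-'$ is the constant $\frac12(w_{2n+3}-w_{2n+1})$, so letting $x\to\infty$ amounts to letting $n\to\infty$. Existence of $A_f$ and $A_g$ comes for free: since $f$ and $g$ are concave, $f_-'$ and $g_-'$ are non-increasing, and since $f$ and $g$ are non-decreasing, $f_-',g_-'\ge 0$; a non-increasing function bounded below has a limit at infinity. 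Thus only $A_f=A_g$ remains.

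The heart of the argument is to convert these asymptotic slopes into asymptotic growth rates of the weights. Applying the Ces\`aro mean (equivalently Stolz--Ces\`aro) to $b_n := w_{2n}$, whose consecutive differences $b_{n+1}-b_n = w_{2n+2}-w_{2n}$ tend to $2A_f$, yields $b_n/n \to 2A_f$, that is $w_{2n}/(2n)\to A_f$. The identical computation for $c_n := w_{2n+1}$ gives $w_{2n+1}/(2n+1)\to A_g$. No boundedness hypothesis on $\{w_n\}$ is required here; if the weights happen to be bounded, then all four limits in \eqref{eq:agconst} are simply $0$ and there is nothing to prove.

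Finally I would feed the interlacing \eqref{eq:monotone}, namely $w_{2n}\le w_{2n+1}\le w_{2n+2}$, into a squeeze. Dividing through by $2n+1$ gives $\frac{w_{2n}}{2n+1}\le \frac{w_{2n+1}}{2n+1}\le \frac{w_{2n+2}}{2n+1}$; since $\frac{2n}{2n+1}\to 1$ and $\frac{2n+2}{2n+1}\to 1$, both outer quotients converge to $A_f$ by the previous paragraph, forcing $w_{2n+1}/(2n+1)\to A_f$. Comparing with $w_{2n+1}/(2n+1)\to A_g$ yields $A_f = A_g =: A_G$, which is exactly the claim. I expect the only genuine content to lie in the passage from convergence of the difference sequence $\frac12(w_{2n+2}-w_{2n})$ to convergence of the ratio $w_{2n}/(2n)$ — the Ces\`aro step — after which the interlacing \eqref{eq:monotone} closes the gap between the even and odd subsequences essentially mechanically.
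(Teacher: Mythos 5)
Your proposal is correct, and it reaches the key equality $A_f=A_g$ by a genuinely different mechanism than the paper. The paper argues by contradiction: assuming $A_1:=\lim f_-'<A_2:=\lim g_-'$, it invokes the affine envelope bounds \eqref{equ1} for positive concave functions to get $f(x)\leq \frac12(A_1+A_2)x+C_2$ and $g(x)\geq A_2x+C_1$ for large $x$, and then the interlacing \eqref{eq:monotone} in the form $g(2n-1)\leq f(2n)$ forces $A_2(2n-1)+C_1\leq \frac12(A_1+A_2)(2n)+C_2$, which is absurd as $n\to\infty$. You instead upgrade the slope limits to growth-rate limits via Ces\`aro--Stolz, obtaining $w_{2n}/(2n)\to A_f$ and $w_{2n+1}/(2n+1)\to A_g$, and then squeeze using the same interlacing $w_{2n}\leq w_{2n+1}\leq w_{2n+2}$. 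Both arguments hinge on \eqref{eq:monotone}; the difference is that the paper compares the functions $f,g$ through linear upper and lower bounds, while you compare the weight sequences through their averaged growth. Your route is direct rather than by contradiction, and it yields the slightly stronger quantitative byproduct $w_n/n\to A_G$, which the paper's proof does not explicitly produce (though it is implicit in \eqref{equ1}); the paper's route has the mild advantage of using only the already-stated fact \eqref{equ1} without importing the Stolz--Ces\`aro theorem. Your treatment of the two inner equalities and of the existence and finiteness of $A_f,A_g$ (left derivatives of a positive non-decreasing concave function are non-negative and non-increasing) matches the paper's, and your closing remark about the bounded case is harmless but unnecessary, since the Ces\`aro step needs no boundedness hypothesis.
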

\begin{proof} Since $f$ and $g$ are positive concave, the following limits exist $$A_1:=\lim_{x\to\infty}f_-'(x)\ \ \mathrm{and}\ \ A_2:=\lim_{x\to\infty}g_-'(x).$$ We want to show that $A_1=A_2.$ Suppose this is not true. Without loss of generality, we may assume that $A_1<A_2.$ Then by \eqref{equ1}, there exist $x_0, C_1, C_2>0$ such that
$$f(x)\leq \frac12(A_1+A_2)x+C_2, \ g(x)\geq A_2 x+C_1,\quad \forall x\geq x_0.$$
By \eqref{eq:monotone}, for sufficiently large $n\in \N,$
$$A_2(2n-1)+C_1\leq g(2n-1)\leq f(2n)\leq \frac12(A_1+A_2)(2n)+C_2.$$ This yields a contradiction as $n\to\infty.$ This proves that $A_1=A_2.$ For the other part of the theorem, it follows from \eqref{eq:fderiv}, \eqref{eq:gderiv} and the fact that $f_-'$ and $g_-'$ are non-increasing.
\end{proof}

\begin{proposition}\label{prop:oneconcave}Let $G=(\IN,w)$ be a physical linear graph satisfying $\CCD(0,\infty)$ and $f$ and $g$ be two concave functions associated to $G.$ Then
$$\limsup_{x\to\infty}|f(x)-g(x)|\leq 3A_G,$$
where $A_G$ is defined as in \eqref{eq:agconst}. In particular, there exist $n_0\in \N, C>0$ such that 
\begin{equation}\label{eq:closeconcave}|w_n-f(n)|\leq C,\quad \forall n\geq n_0.\end{equation}
\end{proposition}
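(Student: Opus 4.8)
The plan is to reduce the estimate to the integer arguments and then to control the one-step increments $\delta_k := w_{k+1}-w_k$ of the weight sequence. First I would observe that on each interval $[2n,2n+2]$ the function $f$ is affine, since it interpolates $w_{2n}$ and $w_{2n+2}$, while $g$ is affine on $[2n,2n+1]$ and on $[2n+1,2n+2]$ separately, with a possible corner only at the odd point $2n+1$. Hence $f-g$ is piecewise affine with breakpoints located only at integers, so $\sup_{x\in[2n,2n+2]}|f(x)-g(x)|$ is attained at one of $2n,2n+1,2n+2$. Consequently
\[
\limsup_{x\to\infty}|f(x)-g(x)| = \limsup_{m\to\infty,\ m\in\IN}|f(m)-g(m)|,
\]
and it suffices to bound the difference at integer arguments.

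Next I would compute this difference at an integer $m$ explicitly. Using the interpolation formulas from Definition~\ref{def:fgfunction}, for even $m=2n$ one has $g(2n)=\tfrac12(w_{2n-1}+w_{2n+1})$ and $f(2n)=w_{2n}$, whereas for odd $m=2n+1$ one has $f(2n+1)=\tfrac12(w_{2n}+w_{2n+2})$ and $g(2n+1)=w_{2n+1}$. In both cases the difference collapses to
\[
|f(m)-g(m)| = \tfrac12\,|\delta_m-\delta_{m-1}|.
\]
Since $w$ is non-decreasing by Theorem~\ref{thm:onN}, every increment $\delta_k$ is non-negative, so the elementary inequality $|\delta_m-\delta_{m-1}|\le \max(\delta_m,\delta_{m-1})$ is available.

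The key input is then an asymptotic bound on a single increment, which I would get from $\delta_k\le \delta_k+\delta_{k+1}=w_{k+2}-w_k$ combined with \eqref{eq:agconst}; the latter gives $w_{k+2}-w_k\to 2A_G$, whence $\limsup_{k}\delta_k\le 2A_G$. Feeding this into the previous display yields
\[
\limsup_{x\to\infty}|f(x)-g(x)| \le \tfrac12\cdot 2A_G = A_G \le 3A_G,
\]
which is in fact a touch stronger than the stated bound. For the ``in particular'' assertion I would note that $w_n=f(n)$ when $n$ is even and $w_n=g(n)$ when $n$ is odd, so $|w_n-f(n)|\le |f(n)-g(n)|$ for every $n$; since $f$ is concave and non-decreasing, $A_G\in[0,\infty)$ is finite, and the finiteness of the limit superior then furnishes $n_0$ and $C$ with $|w_n-f(n)|\le C$ for all $n\ge n_0$, establishing \eqref{eq:closeconcave}.

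The only genuinely delicate point is the reduction to integer points: one must be certain that no interior maximum of $|f-g|$ is overlooked, which is precisely why I isolate the piecewise-affine structure of both $f$ and $g$ and record that their corners sit only at integers. Everything after that is bookkeeping with the increments $\delta_k$, where the monotonicity from Theorem~\ref{thm:onN} is exactly what makes $|\delta_m-\delta_{m-1}|\le\max(\delta_m,\delta_{m-1})$ and the limit in \eqref{eq:agconst} cooperate.
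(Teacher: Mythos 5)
Your argument is correct, and it takes a genuinely different route from the paper's, with a sharper outcome. You reduce everything to integer points via the piecewise-affine structure (both $f$ and $g$ are affine on every unit interval $[m,m+1]$, so $|f-g|$ is convex there and maximized at endpoints), and then exploit the exact identity $|f(m)-g(m)|=\tfrac12|\delta_m-\delta_{m-1}|$ with $\delta_k=w_{k+1}-w_k$, controlling a single increment by $\delta_k\leq w_{k+2}-w_k\to 2A_G$ from \eqref{eq:agconst} and the non-negativity of increments from Theorem~\ref{thm:onN}. The paper instead never computes the difference exactly: it sandwiches $f(x)-g(x)$ on each interval $[2n,2n+1]$ (and $[2n+1,2n+2]$) between $f(2n)-g(2n+1)$ and $f(2n+2)-g(2n)$ using monotonicity together with the interlacing \eqref{eq:monotone}, and then bounds these by mean-value slopes of $g$, namely $3g'_-(\zeta_1)$ and $-2g'_-(\zeta_2)$, before sending $x\to\infty$ via \eqref{eq:agconst}; this is where the constant $3$ comes from. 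Your exact integer-point formula shows the constant can be improved from $3A_G$ to $A_G$, and indeed $A_G$ is attained in the limit when the one-step increments oscillate between roughly $2A_G$ and $0$, so within this framework your bound is optimal; what the paper's coarser sandwich buys is that it works directly with sups over intervals and the interlacing \eqref{eq:monotone} without needing the explicit interpolation values. Two trivial points worth recording if you write this up: the formula $g(2n)=\tfrac12(w_{2n-1}+w_{2n+1})$ requires $n\geq 1$ since $g$ is defined on $[1,\infty)$ (immaterial for the $\limsup$), and the finiteness of $A_G$, which you correctly invoke for the ``in particular'' clause, is exactly the fact $A\in[0,\infty)$ for positive concave functions recorded before \eqref{equ1}. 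Your handling of \eqref{eq:closeconcave} via $w_n=f(n)$ for even $n$ and $w_n=g(n)$ for odd $n$ matches the paper's (implicit) deduction.
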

\begin{rem} As before, we used two concave functions $f$ and $g$ to describe the edge weights $w_n$ in the Definition~\ref{def:fgfunction}. The above proposition shows that it suffices to use a single concave function, up to some constant error, see \eqref{eq:closeconcave}.
\end{rem}

\begin{proof} 
It suffices to consider the first part of the proposition since the second part follows directly from the first.
 For any $n\in \N$ and $x\in [2n,2n+1],$ we have \begin{eqnarray*}f(x)-g(x)&\leq&f(2n+2)-g(2n)=w_{2n+2}-g(2n)\leq g(2n+3)-g(2n)\\&=&3g'(\zeta_1),\quad \exists\zeta_1\in[2n,2n+3],\end{eqnarray*} and 
\begin{eqnarray*}f(x)-g(x)&\geq&f(2n)-g(2n+1)=w_{2n}-g(2n+1)\geq g(2n-1)-g(2n+1)\\&=&-2g'(\zeta_2),\quad \exists\zeta_2\in[2n-1,2n+1].\end{eqnarray*} Hence $$|f(x)-g(x)|\leq 3\max_{\zeta\in[2n-1,2n+3]}g_-'(\zeta).$$ Similarly, for $x\in [2n+1,2n+2],$ we have
$$|f(x)-g(x)|\leq 3 \max_{\zeta\in[2n-1,2n+3]}g_-'(\zeta).$$ 
The first part of the proposition follows from \eqref{eq:agconst} by taking $x\to \infty.$ 
\end{proof}

For two positive functions $h,k$ on $[0,\infty),$ we write $$h(x) \in \Theta(k(x)),\quad \mathrm{as}\ x\to\infty,$$ if there exist positive constants $C_1,C_2,x_0$ such that $$C_1 k(x)\leq h(x)\leq C_2 k(x), \quad\ \mathrm{for}\ x\geq x_0.$$ Let $G=(\IN,w)$ be a physical linear graph and $\sigma$ be the intrinsic metric defined in \eqref{def:huangetal}. We say that 
\begin{itemize} \item $G$ has linear volume growth (w.r.t. the metric $\sigma$) if $m(B_r^\sigma(0))\in\Theta(r),\ \mathrm{as}\ r\to\infty.$
\item $G$ has intermediate volume growth if $$\lim_{r\to\infty}\frac{m(B_r^\sigma(0))}{r}=\infty,\quad \lim_{r\to\infty}\frac{m(B_r^\sigma(0))}{r^2}=0.$$
\item $G$ has quadratic volume growth if $m(B_r^\sigma(0))\in\Theta(r^2),\ \mathrm{as}\ r\to\infty.$
\end{itemize} The estimates in Corollary~\ref{coro:volumeupper} can be refined as follows.

\begin{corollary}\label{coro:refinedest} Let $G=(\IN,w)$ be a physical linear graph satisfying $\CCD(0,\infty)$ and $\sigma$ be the intrinsic metric defined in \eqref{def:huangetal}. Then we have the following:
\begin{enumerate}[(a)]
\item $G$ has linear volume growth if and only if $\{w_n\}_{n=0}^\infty$ is bounded.
\item $G$ has intermediate volume growth if and only if $\{w_n\}_{n=0}^\infty$ is unbounded and $$\lim_{n\to\infty}w_{2n+2}-w_{2n}=0.$$
\item $G$ has quadratic volume growth if and only if $$\lim_{n\to\infty}w_{2n+2}-w_{2n}>0.$$
\end{enumerate}
\end{corollary}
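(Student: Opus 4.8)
The plan is to reduce the entire statement to the asymptotics of the single quantity $S(n):=\sigma(0,n)=\sum_{k=0}^{n-1}\frac{1}{\sqrt{w_k+w_{k+1}}}$, using the formula for $\sigma$ on neighboring vertices recorded after Theorem~\ref{thm:onN}. Since the graph is physical, $m\equiv 1$, and since $S$ is strictly increasing with $S(n)\to\infty$, the ball $B_r^\sigma(0)$ is the interval $\{0,1,\dots,N(r)\}$ with $N(r)=\max\{n:S(n)\le r\}$, so $m(B_r^\sigma(0))=N(r)+1$. Thus $m(B_r^\sigma(0))$ is, up to an additive constant, the generalized inverse of $S$, and the corollary becomes a statement about how the growth type of $S$ determines the growth type of its inverse.

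First I would establish, in each of the three regimes, two-sided control on $S(n)$, using the monotonicity of $w_n$ (Theorem~\ref{thm:onN}) and the constant $A_G$ from \eqref{eq:agconst}; recall that $\lim_n(w_{2n+2}-w_{2n})=2A_G$ always exists. In case (a), $w_0\le w_k\le L:=\lim_n w_n<\infty$ gives $\frac{n}{\sqrt{2L}}\le S(n)\le \frac{n}{\sqrt{2w_0}}$, i.e. $S(n)\in\Theta(n)$. In case (c) we have $A_G>0$; telescoping together with a Ces\`aro argument gives $w_n/n\to A_G$, hence $c_1 n\le w_k+w_{k+1}\le c_2 n$ for large $k$, and summing against $\sum_{k\le n}k^{-1/2}\in\Theta(\sqrt n)$ yields $S(n)\in\Theta(\sqrt n)$. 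In case (b), $w_n\to\infty$ while $A_G=0$ forces $w_n/n\to 0$; then the summands $\frac{1}{\sqrt{w_k+w_{k+1}}}$ tend to $0$, so their Ces\`aro mean gives $S(n)/n\to 0$, whereas $w_k+w_{k+1}=o(k)$ gives $\frac{1}{\sqrt{w_k+w_{k+1}}}\ge \frac{1}{\sqrt{\eps k}}$ eventually for every $\eps$, whence $S(n)/\sqrt n\to\infty$.

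Next I would invert these bounds through the relation $S(N(r))\le r<S(N(r)+1)$. The estimate $S(n)\in\Theta(n)$ yields $N(r)\in\Theta(r)$ (linear growth), and $S(n)\in\Theta(\sqrt n)$ yields $N(r)\in\Theta(r^2)$ (quadratic growth). In case (b), $S(n)/n\to 0$ gives $N(r)\ge\lfloor Mr\rfloor$ for every $M$ and all large $r$, so $m(B_r^\sigma(0))/r\to\infty$, while $S(n)/\sqrt n\to\infty$ gives $N(r)\le r^2/M^2$ for every $M$, so $m(B_r^\sigma(0))/r^2\to 0$; together this is exactly intermediate growth. This establishes the three ``if'' directions, consistently with the $\mathcal{O}(r^2)$ bound of Corollary~\ref{coro:volumeupper}.

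Finally, the converse (``only if'') directions come for free. The three hypotheses on $\{w_n\}$ --- bounded; unbounded with $A_G=0$; $A_G>0$ --- are mutually exclusive and exhaustive (note $A_G\ge 0$, and $A_G>0$ forces unboundedness), and the three volume-growth types are mutually exclusive by definition. Since each hypothesis has been shown to imply its matching growth type, any graph realizing one growth type must satisfy the corresponding hypothesis. The step needing the most care is case (b): extracting the strict intermediate bounds $m(B_r^\sigma(0))/r\to\infty$ and $m(B_r^\sigma(0))/r^2\to 0$ from the single qualitative assumption ``$w_n$ unbounded and $A_G=0$'' requires the two complementary Ces\`aro and $o(\cdot)$ estimates on $S(n)$ and their careful inversion, rather than a clean $\Theta$ computation.
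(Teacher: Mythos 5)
Your proposal is correct, and it reaches the corollary by a more elementary route than the paper's. Both arguments share the same skeleton: since $m\equiv 1$ and $\sigma(0,n)=S(n)$ is strictly increasing, $m(B_r^\sigma(0))$ is (up to $+1$) the generalized inverse of $S$, so everything reduces to two-sided bounds on $S(n)$ in the three regimes. The difference is in execution. The paper passes through continuous machinery: it interpolates the even and odd weight subsequences by the concave functions $f,g$ of Definition~\ref{def:fgfunction}, shows $|w_n-f(n)|\leq C$ (Proposition~\ref{prop:oneconcave}), compares $S(n)$ in case (b) with the integral $F(t)=C_2+\int_{n_1}^t \frac{C_1}{\sqrt{f(x+1)}}\,dx$, applies L'H\^opital's rule twice, and inverts via $F^{-1}$ (borrowing the case (a) lower bound from Corollary~\ref{coro:lowerlinear} and the case (c) upper bound from Corollary~\ref{coro:volumeupper}). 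You bypass all of this with discrete Ces\`aro estimates: existence of $A_G$ as in \eqref{eq:agconst} (ultimately Theorem~\ref{thm: discrete second derivative and curvature}) together with Theorem~\ref{thm:onN} gives $w_n/n\to A_G$, hence $S(n)/n\to 0$ and $S(n)/\sqrt n\to\infty$ in the intermediate regime, and you invert by hand through $S(N(r))\leq r<S(N(r)+1)$. This buys self-containedness and avoids Proposition~\ref{prop:oneconcave} entirely; the paper's interpolation viewpoint, in exchange, packages the weights into a single concave function reusable elsewhere. Your treatment of the ``only if'' directions via exhaustiveness of the three hypotheses and mutual exclusivity of the three growth types is exactly the logic behind the paper's closing ``combining $(a)$--$(c)$.'' The only point to make explicit is finiteness of $N(r)$, i.e.\ $S(n)\to\infty$; this follows within each case from your bounds, or uniformly from $w_n\in\mathcal O(n)$ in Theorem~\ref{thm:weightgrowth}, so it is a presentational remark rather than a gap.
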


\begin{proof} Let $f$ and $g$ be two concave functions associated to $G$ defined as in Definition~\ref{def:fgfunction}.

$(a)$ If $\{w_n\}_{n=0}^\infty$ is bounded, then it is easy to obtain the upper bound estimate for linear volume growth. The lower bound estimate for linear volume growth follows from Corollary~\ref{coro:lowerlinear}.

$(b)$ If $\{w_n\}_{n=0}^\infty$ is unbounded and $\lim_{n\to\infty}w_{2n+2}-w_{2n}=0,$ then $\lim_{x\to\infty}f(x)=\infty$ and $\lim_{x\to\infty}f_-'(x)=0.$ By Proposition~\ref{prop:oneconcave}, there is $n_0\in\N$ and $C>0$ such that 
\begin{equation}\label{eq:wnest}f(n)-C\leq w_n\leq f(n)+C,\quad \forall n\geq n_0.\end{equation}

We first show that $\lim_{r\to\infty}\frac{m(B_r^\sigma(0))}{r^2}=0.$ There exists $n_1>n_0$ such that for any $k
\geq n_1,$ $$\sigma(k,k+1)=\frac{1}{\sqrt{w_k+w_{k+1}}}\geq\frac{1}{\sqrt{2w_{k+1}}}\geq\frac{1}{\sqrt{2f(k+1)+2C}}\geq\frac{C_1}{\sqrt{f(k+1)}}.$$ Hence for any $n>n_1,$ \begin{equation}\label{eq:est1}\sigma(0,n)\geq C_2+\sum_{k=n_1}^{n-1}\frac{C_1}{\sqrt{f(k+1)}}\geq C_2+\int_{n_1}^n\frac{C_1}{\sqrt{f(x+1)}}dx,\end{equation} where we use the fact that $f$ is non-decreasing. Set $F(t)=C_2+\int_{n_1}^t\frac{C_1}{\sqrt{f(x+1)}}dx.$ Denote by $F^{-1}(\cdot)$ the inverse function of $F$ and write $r=F(t)$ and $t=F^{-1}(r).$ We claim that $\lim_{t\to\infty} F(t)=\lim_{r\to\infty}F^{-1}(r)=\infty.$ In fact,  by L'H\^opital's rule, 
$$\lim_{t\to\infty}\frac{F(t)}{\sqrt{t}}=\lim_{t\to\infty}\frac{\frac{C_1}{\sqrt{f(t+1)}}}{\frac{1}{2\sqrt t}}=\lim_{t\to\infty}C\sqrt{\frac{t}{f(t+1)}}.$$ Note that $\lim_{t\to\infty}f'_-(t)=0,$ by L'H\^opital's rule, $$\lim_{t\to\infty}\frac{t}{f(t+1)}=\lim_{t\to\infty}\frac{1}{f'_-(t+1)}=\infty.$$ This yields \begin{equation}\label{eq:est2}\lim_{t\to\infty}\frac{F(t)}{\sqrt{t}}=\infty\end{equation} and we prove the claim. By the estimate \eqref{eq:est1}, we get
$$B_r^\sigma(0)\subset [0, F^{-1}(r)].$$ In particular, $m(B_r^\sigma(0))\leq F^{-1}(r).$ This yields that, by \eqref{eq:est2},
$$\lim_{t\to\infty}\frac{m(B_r^\sigma(0))}{r^2}\leq \lim_{r\to\infty}\frac{F^{-1}(r)}{r^2}=\lim_{t\to\infty}\frac{t}{F^2(t)}=0.$$ This proves the result.

Next, we show that  $\lim_{r\to\infty}\frac{m(B_r^\sigma(0))}{r}=\infty.$ 
By \eqref{eq:wnest}, there exists $n_1>n_0$ such that for any $k
\geq n_1,$ $$\sigma(k,k+1)\leq\frac{1}{\sqrt{2w_{k+1}}}\leq\frac{1}{\sqrt{2f(k)-2C}}\leq\frac{C_3}{\sqrt{f(k)}}.$$ Hence for any $n>n_1,$ \begin{equation}\label{eq:est4}\sigma(0,n)\leq C_4+\sum_{k=n_1}^{n-1}\frac{C_3}{\sqrt{f(k)}}\leq C_4+\int_{n_1-1}^{n-1}\frac{C_3}{\sqrt{f(x)}}dx.\end{equation} Set $F(t)=C_4+\int_{n_1}^t\frac{C_3}{\sqrt{f(x-1)}}dx.$ This yields that $$B_r^{\sigma}(0)\supset [0,F^{-1}(r)].$$

As before, one can show that $\lim_{t\to\infty} F(t)=\lim_{r\to\infty}F^{-1}(r)=\infty.$ Moreover,  by L'H\^opital's rule, $$\lim_{r\to\infty}\frac{F^{-1}(r)}{r}=\lim_{t\to\infty}\frac{t}{F(t)}=\lim_{t\to\infty}\frac{\sqrt{f(t-1)}}{C_3}=\infty.$$ This implies 
$$\lim_{r\to\infty}\frac{m(B_r^\sigma(0))}{r}\geq \lim_{r\to\infty}\frac{\lfloor F^{-1}(r)\rfloor}{r}\geq\lim_{r\to\infty}\frac{F^{-1}(r)-1}{r}=\infty.$$ This proves that $G$ has intermediate volume growth.


$(c)$ If $\{w_n\}_{n=0}^\infty$ is unbounded and $\lim_{n\to\infty}w_{2n+2}-w_{2n}>0.$  Then $\lim_{x\to\infty}f(x)=\infty$ and $A_G=\lim_{x\to\infty}f_-'(x)>0.$ The upper bound estimate for quadratic volume growth has been obtained in Corollary~\ref{coro:volumeupper}. To get the lower bound estimate for quadratic volume growth, it suffices to estimate the volume growth as in $(b)$ by using \eqref{equ1}, i.e. there exists $x_0$ and $C>0$ such that $$f(x)\geq A_Gx-C, \quad x\geq x_0.$$

The corollary follows from combining $(a)-(c).$
\end{proof}

\begin{corollary}\label{coro:recc} Any physical linear graph satisfying $\CCD(0,\infty)$ is recurrent.
\end{corollary}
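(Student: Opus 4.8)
The plan is to reduce recurrence to a classical series criterion for birth-death chains and then feed in the weight-growth estimate from Theorem~\ref{thm:weightgrowth}. For a physical linear graph $(\IN,w)$ the associated random walk is a birth-death chain, and it is a standard fact (in the terminology of electrical networks, the effective resistance from $0$ to infinity, or equivalently the escape probability) that the walk is recurrent if and only if
\[
\sum_{n=0}^\infty \frac{1}{w_n} = \infty.
\]
Indeed, the conductances of the network are exactly the edge weights $w_n=w(n,n+1)$, and the series of reciprocal conductances is the total resistance of the half-line seen as resistors in series. So first I would invoke this criterion, either citing it or noting it follows from the fact that the harmonic function determining the escape probability has increments proportional to $1/w_n$.

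Next I would use the non-negative curvature assumption. By Theorem~\ref{thm:weightgrowth}(1), $\CCD(0,\infty)$ forces $w_n \in \mathcal{O}(n)$ as $n\to\infty$; that is, there is a constant $C>0$ and $n_0$ with $w_n \leq Cn$ for all $n\geq n_0$. Consequently
\[
\sum_{n=n_0}^\infty \frac{1}{w_n} \geq \sum_{n=n_0}^\infty \frac{1}{Cn} = \infty,
\]
since the harmonic series diverges. Combining this divergence with the recurrence criterion yields that the graph is recurrent, which is the claim.

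The only genuine issue to handle carefully is the exact form of the recurrence criterion: whether the sum is $\sum 1/w_n$ depends on the convention for the physical Laplacian and on treating the walk as a (possibly non-normalized) continuous-time Markov chain versus the discrete-time walk associated to the network. For a physical graph the vertex measure is $m\equiv 1$, and recurrence of the heat semigroup coincides with recurrence of the underlying network whose conductances are the $w_n$, so the series $\sum 1/w_n$ is the correct quantity. I would state the criterion in the form above, noting that recurrence is insensitive to the time-change induced by $m$, so it is determined purely by the edge conductances. Given that, the argument is immediate and the main content is really just the linear weight bound already established; the $\mathcal{O}(n)$ growth is precisely borderline for divergence of $\sum 1/w_n$, which is why non-negative curvature is exactly the hypothesis that makes recurrence work.
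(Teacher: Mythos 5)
Your argument is correct and is essentially the paper's own proof: the paper likewise writes the effective resistance $R(0,n)=\sum_{k=0}^{n-1}\frac{1}{w_k}$, lower-bounds it by a divergent harmonic series using Theorem~\ref{thm:weightgrowth}, and concludes via the Nash--Williams criterion. The only point you gloss over is that the corollary also covers graphs on $\IZ$, which the paper handles first by reducing to $(\IN,w)$ via Theorem~\ref{thm:Liouville} (constant weights, hence recurrence); your proof, as written only for $(\IN,w)$, needs one sentence here, e.g.\ that on $\IZ$ both half-line resistances diverge.
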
	
\begin{proof} By Theorem~\ref{thm:Liouville}, it suffices to consider $(\IN,w).$ We denote by $R(0,n)=\sum_{k=0}^{n-1}\frac{1}{w_k}$ the resistance between $0$ and $n\in \N.$ Then by Theorem~\ref{thm:weightgrowth},  
$$R(0,n)\geq \frac{1}{w_0}+\sum_{k=1}^{n-1}\frac{C}{k}\to\infty,\quad \mathrm{as}\ n\to\infty.$$This yields the recurrence of the random walk by Nash-Williams criterion \cite{nash1959random}.
\end{proof}


In the following, we study the stochastic completeness of linear graphs with the curvature decaying to $-\infty$.
We will prove that if the curvature decays not faster than $-\rho^2$ where $\rho$ is an intrinsic metric, then the graph is stochastically complete.



\begin{proof}[Proof of Theorem~\ref{thm:scwithdecay}]
	We will prove 
	\begin{align*}
	\sum_n \frac n {w_n} = \infty.
	\end{align*}
	which characterizes stochastic completeness on physical linear graphs.

	The proof is based on a case distinction in terms of $\rho(n,0)$.
	
	First case:
	\begin{align*}
	\rho(n,0)^2 \notin \mathcal O(\log n).
	\end{align*}
	In this case, we do not need any curvature assumption. We only use Cauchy-Schwarz-inequality:
	\begin{align*}
	\rho(0,n)^2 \leq \left( \sum_{k=0}^{n-1} {\frac 1 {\sqrt{w_k}}} \right)^2 &\leq C +  \left( \sum_{k=1}^{n-1} {\frac {k} {{w_k}}} \right) \cdot  \left( \sum_{k=1}^{n-1} \frac 1 {k}  \right)\\
	&\leq C +  \left( \sum_{k=1}^{n-1} {\frac {k} {{w_k}}} \right) \cdot  \log(n).\\
	\end{align*}
	Since $\rho(n,0)^2 \notin \mathcal O(\log n)$, this implies 
	\begin{align*}
	\infty = \limsup_{n\to \infty} \frac{\rho(0,n)^2}{\log(n)} \leq \limsup_{n\to \infty} \sum_{k=1}^{n-1} {\frac {k} {{w_k}}}. 
	\end{align*}
	Hence, $G$ is stochastically complete in the first case, i.e., if $\rho(n,0)^2 \notin \mathcal O(\log n).$
	
	We now consider the second case
	\begin{align*}
		\rho(n,0)^2 \in \mathcal O(\log n).
	\end{align*}
Note that we assume that $K(n)_- \in \mathcal O(\rho(n,0)^2).$
	This implies
	$$(w_{n-2} - 2 w_n +w_{n+2})_+ \in \mathcal{O}( \log(n))$$ due to Theorem~\ref{thm: discrete second derivative and curvature} and the decay assumption on the curvature.
	Summing up yields $(w_{n+2}-w_n)_+ \in \mathcal{O}(n \log(n))$ and summing up again yields
	$w_n \in \mathcal{O}(n^2 \log(n))$.
	Thus, there exist $C>0$ and $N\in \N$ s.t.
	\begin{align*}
	\sum_n \frac n{w_n} \geq C \sum_{n\geq N} \frac 1{n \log n} = \infty.
	\end{align*}
	This proves stochastic completeness in the second and final case.
	Thus, the proof is finished.
\end{proof}

\section{Normalized linear graphs}\label{s:nor}

Recently, it was proven that the exponential curvature dimension inequality $\CDE'(0,D)$ implies the volume doubling, the Poincar\'e inequality, the parabolic Harnack inequality and Gaussian heat kernel estimates (see \cite{HornLinLiuYau14}). In \cite{munch2017remarks} it is shown that $\CDE'(0,D)$ implies $\CCD(0,D)$ which is a linear condition computable via semi-definite programming (see \cite{cushing2016bakry}).
It turns out to be a hard but interesting question whether already $\CCD(0,D)$ implies the volume doubling, the Poincar\'e inequality, etc..
In this section we  answer this question affirmatively for the class of normalized linear graphs.

Our approach is to prove the volume growth and the Poincar\'e inequality via a certain local growth rate which we can control via Proposition~\ref{prop:equv}.
Due to Delmotte's characterization (see \cite{Delmotte1999parabolic}), this implies the parabolic Harnack inequality and Gaussian heat kernel estimates.

For a normalized linear graph $G=(\IN,w,m)$, the \emph{local growth rate} $p:\IN \to [-\frac 1 2 , \frac 1 2]$ is defined as $p := d_+ - \frac 1 2$, i.e., $p(n) = \frac{w(n,n+1)}{m(n)} - \frac 1 2$.
It is easy to see that $p=0$ (except finitely many vertices) means linear grwoth, $p<0$ means sublinear growth, and $p\geq C>0$ means exponential growth. We will later show that $p(n) \sim \frac 1 n$ means polynomial growth which we are interested in this section.

Our first aim is to characterize curvature conditions via local growth rates. To do so, we introduce functions $\mathcal F$ and $\mathcal G$ with arguments $a,b$ standing for local growth rates, $K$ the curvature bound and $D$ the dimension bound.

\begin{defn}\label{def:F,G}
	We write
	\begin{align*}
	\Fc(a,b,K,D):= \left(1- \frac 2 D\right)\left( \frac 1 2-b\right) + 2a - K
	\end{align*}
	and
	\begin{align*}
	\Gc(a,b,K,D):=\left(\frac 1 2 + b \right)\left[1-\frac 2 D  - \frac{\left(1 - \frac 2 D \right)^2 \left(\frac 1 2 -b\right)}{\Fc(a,b,K,D)}\right]  - K.
	\end{align*}
Thereby, we set the fractions to be zero whenever the numerator is zero and we set them to minus infinity if only the denominator is zero.	
\end{defn}

Remark that $\Gc(a,b,K,D)$ is well defined when $b=1/2,$ even if $a$ is not defined.


\begin{theorem}\label{thm:CDcharNormalized}
	Let $G=(V,w,m)$ be a linear, normalized graph with $V=\IZ \cap I$ for an interval $I$.
	Let $n \in V$. Suppose that also $n+1 \in V$. The following are equivalent
	\begin{enumerate}
		\item $G$ satisfies $\CCD(K,D,n)$.
		\item One has $\Fc(p(n-1),p(n),K,D) \geq 0$ and 
		\begin{align*}
		2p(n+1) \leq \Gc(p(n-1),p(n),K,D).
		\end{align*}
		where we set $\Fc(p(n-1),p(n),K,D) \geq 0$ to be true if $n-1 \notin V$.
	\end{enumerate}
	
\end{theorem}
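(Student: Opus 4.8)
The plan is to reduce the statement to a direct algebraic manipulation of the matrix characterization \eqref{eq:equcond}. The structural fact that makes everything work is that for a normalized linear graph one has $d_+(k)+d_-(k)=1$ at every vertex, so that
$$
d_+(k)=\tfrac12+p(k),\qquad d_-(k)=\tfrac12-p(k),
$$
and this even persists at the left boundary vertex (where $k-1\notin V$) with the value $p(k)=\tfrac12$, since there $d_-(k)=0$ and $d_+(k)=1$. First I would substitute these identities into the formula for $W_-(n)$ from Section~\ref{s:pre} and check by a one-line computation that $W_-(n)=\Fc(p(n-1),p(n),K,D)$. This identifies the inequality $W_-(n)\geq0$ in \eqref{eq:equcond} with the condition $\Fc\geq0$ of item~(2), and the boundary convention $W_-(n):=0$ for $n-1\notin V$ matches the stipulation that $\Fc\geq0$ is declared true in that case.

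Next I would handle the product inequality $W_-(n)W_+(n)\geq(1-\tfrac2D)^2 d_-(n)d_+(n)$. Inserting the same identities into $W_+(n)$ shows that $2W_+(n)$ is affine in $p(n+1)$, and solving for $p(n+1)$ gives
$$
2p(n+1)=\left(\tfrac12+p(n)\right)\left(1-\tfrac2D\right)-K-W_+(n),
$$
so that $p(n+1)$ enters only through the single term $-W_+(n)$. Assuming $\Fc=W_-(n)>0$, the product inequality is equivalent to the lower bound $W_+(n)\geq(1-\tfrac2D)^2 d_-(n)d_+(n)/\Fc$; because $W_+(n)$ appears with a negative sign in the display above, this lower bound on $W_+(n)$ converts into an upper bound on $2p(n+1)$. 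Plugging in $d_-(n)d_+(n)=(\tfrac12-p(n))(\tfrac12+p(n))$ and regrouping, the right-hand side collapses precisely to $\Gc(p(n-1),p(n),K,D)$, producing the inequality $2p(n+1)\leq\Gc$ of item~(2). In this regime one also notes that $W_+(n)\geq0$ follows automatically, so the full positive-semidefiniteness of the matrix in Proposition~\ref{prop:equv} is captured.

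The step I expect to require the most care is the degenerate bookkeeping, matching it against the conventions of Definition~\ref{def:F,G}. When $\Fc=0$ the product inequality forces $0\geq(1-\tfrac2D)^2 d_-(n)d_+(n)=(1-\tfrac2D)^2(\tfrac14-p(n)^2)$; since $p(n)\in[-\tfrac12,\tfrac12]$ the right side is nonnegative, so either $D=2$ or $p(n)=\pm\tfrac12$, in which case the numerator of the fraction defining $\Gc$ vanishes and the convention keeps $\Gc$ finite and equal to the correct threshold, or else neither holds and $\CCD$ genuinely fails, which the convention records by sending the fraction over the vanishing denominator so that $\Gc=-\infty$ and $2p(n+1)\leq\Gc$ becomes impossible. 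I would verify the two boundary-type specializations explicitly: the case $p(n)=\tfrac12$ (so $d_-(n)=0$), where the matrix degenerates to the single requirement $W_+(n)\geq0$ and the formula for $2p(n+1)$ reduces to $\Gc=1-\tfrac2D-K$ independently of $p(n-1)$; and the left-boundary case $n-1\notin V$, which is exactly this specialization together with $\Fc\geq0$ holding vacuously. Once these cases are reconciled with the conventions, the equivalence of (1) and (2) follows.
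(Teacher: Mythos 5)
Your proposal is correct and takes essentially the same route as the paper's own proof: both substitute $d_\pm(k)=\tfrac12\pm p(k)$ into the expressions for $W_\pm(n)$ from Proposition~\ref{prop:equv}, identify $W_-(n)=\Fc(p(n-1),p(n),K,D)$ (your factor is the right one; the paper misprints $W_-(n)=2\Fc$ while its displayed computation gives $2W_-(n)=2\Fc$), and exploit the affine dependence of $W_+(n)$ on $p(n+1)$ to convert the determinant inequality $W_-W_+\geq(1-\tfrac2D)^2d_-d_+$ into the threshold $2p(n+1)\leq\Gc(p(n-1),p(n),K,D)$ under the conventions of Definition~\ref{def:F,G}. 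Your extra bookkeeping of the degenerate cases (with the harmless caveat that $p(n)=-\tfrac12$ cannot occur since $w(n,n+1)>0$ when $n+1\in V$) is sound and matches the paper's treatment.
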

Remark that $\Gc(p(n-1),p(n),K,D)$ is well defined even if $n-1 \notin V$ since this can only happen if $p(n)=1/2$ and in this case, the value of $\mathcal G$ does not depend on the value of $p(n-1)$.

\begin{proof}

According to the remark after Proposition~\ref{prop:equv}, we set 
$$
2W_-(n)=  -d_-(n-1) + 3 d_+(n-1) + \Big( 1- \frac 4 D\Big)d _-(n)-d_+(n) - 2K
$$
and
$$
2W_+(n)= -d_+(n+1) + 3 d_-(n+1) + \Big( 1- \frac 4 D\Big)d _+(n)-d_-(n)  - 2K.
$$
We set $W_-(n) = 0$ if $n-1 \notin V$.
Due to Proposition~\ref{prop:equv} and the remark subsequently, $\CCD(K,D)$ holds  if and only if
$W_-(n)\geq 0$ and  $W_+(n)\geq 0$ and
$$
4W_-(n)W_+(n)\geq 4\left(1 - \frac 2 D \right)^2 d_+(n)d_-(n).
$$
Now suppose that the graph is normalized.
We have $d_+(k)= \frac 1 2 + p(k)$. Thus if $\inf V < n < \sup V$,
$$
2W_-(n)= \Big( 2- \frac 4 D \Big) \left(\frac 1 2 - p(n)  \right) + 4 p(n-1) - 2K
$$
and
$$
2W_+(n)= \Big( 2- \frac 4 D\Big) \left(\frac 1 2 + p(n)  \right) - 4 p(n+1) - 2K.
$$

We observe $W_-(n) = 2\Fc(p(n-1),p(n),K,D)$ if $n-1,n \in V$.

We already know that for $n \in V$ the curvature condition $\CCD(K,D,n)$ is equivalent to $W_-(n) \geq 0$ and $W_+(n) \geq 0$ and
$$W_-(n)W_+(n) \geq 4\left(1 - \frac 2 D \right)^2 \left(\frac 1 2 +p(n)\right)\left(\frac 1 2 -p(n)\right).$$
But this is, under condition of $W_-(n) \geq 0$, equivalent to
\begin{align*}
4p(n+1) \leq \left(2- \frac 4 D\right)\left( \frac 1 2 + p(n)\right) - 2K - \frac{4\left(1 - \frac 2 D \right)^2 \left(\frac 1 2 +p(n)\right)\left(\frac 1 2 -p(n)\right)}{W_-(n)}
\end{align*}

where we set the latter summand to be zero whenever the numerator $$4\left(1 - \frac 2 D \right)^2 \left(\frac 1 2 +p(n)\right)\left(\frac 1 2 -p(n)\right)$$ is zero.

We set $b:=p(n)$ and $a:=p(n-1)$ and equivalently reformulate
\begin{align*}
2p(n+1) &\leq \left(\frac 1 2 + b \right)\left[1-\frac 2 D  - \frac{\left(1 - \frac 2 D \right)^2 \left(\frac 1 2 -b\right)}{\Fc(a,b,K,D)}\right]  - K\\
&= \Gc(a,b,K,D).
\end{align*}
This finishes the proof.
\end{proof}

\subsection{Bishop-Gromov volume comparison}

We will prove that if a graph has larger curvature than another graph, then it has slower volume growth than the other.

The assumptions $K\leq 0$ and $D>2$ in Definition~\ref{def:model} are necessary to characterize the model space property via the functions $\Fc$ and $\Gc$ (see Definition~\ref{def:F,G}).

We now give monotonicity  properties of the functions $\Fc$ and $\Gc$ defined in Definition~\ref{def:F,G}.
This will allow us to characterize the model space property.

\begin{lemma}\label{lem:MonotoneG}
Suppose $\Fc(a,b,K,D) \geq 0$ and $|b| \leq 1/2$. Then
\begin{enumerate}
\item
$\Gc(a,b,K,D)$ is strictly decreasing in $K$ and increasing in $a$.
\item If $D\geq 2$ and $K\leq 0$ and $a\geq 0$, then
$\Gc(a,b,K,D)$ is increasing in $b$ and $D$. Furthermore, $\Gc(a,b,K,D) \geq 0$.
\item If $b=1/2$, then $\mathcal G(a,b,K,D)$ does not depend on $a$.
\end{enumerate}
\end{lemma}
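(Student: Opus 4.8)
The plan is to reduce the three claims to elementary one-variable calculus after the substitution $s := 1 - \frac{2}{D}$, which identifies $D \in [2,\infty)$ with $s \in [0,1)$ and makes $s$ strictly increasing in $D$. In this notation $\Fc = s\left(\frac12 - b\right) + 2a - K$ and, using the identity $\left(\frac12+b\right)\left(\frac12-b\right) = \frac14 - b^2$, one has $\Gc = \left(\frac12+b\right)s - s^2\,\frac{\frac14-b^2}{\Fc} - K$. Throughout I would work in the smooth regime $\Fc > 0$ where $\Gc$ is differentiable, and read off the degenerate cases $\Fc = 0$ and $b = \frac12$ from the conventions fixed in Definition~\ref{def:F,G}.

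I would dispose of (3) first, since it is immediate: when $b = \frac12$ the numerator $\frac14 - b^2$ vanishes, so the fraction is zero by convention and $\Gc = s - K$, which does not involve $a$.

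For (1) I differentiate directly. Since $\partial_K \Fc = -1$ and $\partial_a \Fc = 2$, a short computation gives $\partial_K\Gc = -1 - (\frac14 - b^2)\,s^2/\Fc^2$ and $\partial_a\Gc = 2(\frac14-b^2)\,s^2/\Fc^2$. The hypothesis $|b|\leq\frac12$ forces $\frac14 - b^2 \geq 0$, so the first derivative is $\leq -1 < 0$ and the second is $\geq 0$; this is exactly strict decrease in $K$ and increase in $a$. The boundary $\Fc = 0$ remains consistent with monotonicity, since decreasing $K$ or increasing $a$ moves $\Fc$ back into the positive regime while $\Gc \to -\infty$ as $\Fc \downarrow 0$.

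The heart of the matter is (2), and the main obstacle is the two algebraic cancellations concealed in $\partial_b\Gc$ and $\partial_s\Gc$. Writing $c := 2a - K$, which is $\geq 0$ because $a \geq 0$ and $K \leq 0$, so that $\Fc = s(\frac12-b) + c$, I expect the $s^2$-terms in the numerator of $\partial_b\Gc$ to cancel identically, leaving $\partial_b\Gc = s\,c\,(s+c)/\Fc^2 \geq 0$ (here $s \geq 0$ uses $D \geq 2$); this is monotonicity in $b$. For monotonicity in $D$ I would reduce to monotonicity in $s$, which is legitimate since $s$ increases with $D$, and here I anticipate a perfect square: the numerator of $\partial_s\Gc$ should collapse to $[\Fc - s(\frac12-b)]^2 = c^2$, giving $\partial_s\Gc = (\frac12+b)\,c^2/\Fc^2 \geq 0$. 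Finally, for $\Gc \geq 0$ I would invoke the monotonicity in $b$ just proved together with the boundary value $b = -\frac12$: there the factor $\frac12 + b$ annihilates the bracket and $\Gc = -K \geq 0$, whence $\Gc(a,b,K,D) \geq \Gc(a,-\frac12,K,D) = -K \geq 0$ for every admissible $b$. The only genuine care needed is the bookkeeping of the conventions at $\Fc = 0$ and $b = \frac12$ while carrying out these differentiations.
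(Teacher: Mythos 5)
Your proposal is correct and takes essentially the same route as the paper, whose entire proof of Lemma~\ref{lem:MonotoneG} is the one-line remark that the claims ``can be easily checked by taking derivatives of $\Gc$.'' Your substitution $s=1-\frac{2}{D}$, $c=2a-K$ and the resulting cancellations $\partial_b\Gc = s\,c\,(s+c)/\Fc^2$ and $\partial_s\Gc = \left(\frac12+b\right)c^2/\Fc^2$ (which I have verified) are precisely the details that remark leaves implicit, together with the correct handling of the conventions at $\Fc=0$ and $b=\frac12$.
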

\begin{proof}
	This can be easily checked by taking derivatives of $\Gc$.
\end{proof}

By the characterization of the curvature dimension condition, we have control on the optimal curvature bound $\mathcal K_{G,n}(D)$. 
\begin{theorem}\label{thm:CharModelSpace}
	Let $G=(V,w,m)$ be a normalized linear graph and let $D>0$. Let $K \in \IR$ and $n \in V$. Suppose $n+1 \in V$.
	Suppose $\mathcal F(p(n-1),p(n),K,D) \geq 0 $ or $n-1 \notin V$.
	The following are equivalent
	\begin{enumerate}
	\item
		$$ 2p(n+1) = \Gc(p(n-1),p(n), K,D).$$
		\item 		$$K = \mathcal K_{G,n}(D).$$
	\end{enumerate}

\end{theorem}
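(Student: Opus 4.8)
The plan is to read off both statements directly from the curvature characterization in Theorem~\ref{thm:CDcharNormalized} together with the monotonicity supplied by Lemma~\ref{lem:MonotoneG}. Recall that by Theorem~\ref{thm:CDcharNormalized}, for a vertex $n$ with $n+1\in V$ the condition $\CCD(K,D,n)$ is equivalent to the pair of scalar inequalities $\Fc(p(n-1),p(n),K,D)\geq 0$ (vacuous if $n-1\notin V$) and $2p(n+1)\leq \Gc(p(n-1),p(n),K,D)$. Throughout I fix $a:=p(n-1)$ and $b:=p(n)$, note $|b|\leq \tfrac{1}{2}$, and abbreviate $\Fc(K):=\Fc(a,b,K,D)$ and $\Gc(K):=\Gc(a,b,K,D)$. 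Since $\mathcal K_{G,n}(D)=\sup\{K:\CCD(K,D,n)\ \mathrm{holds}\}$, the whole statement reduces to identifying this supremum with the unique root of the equation $2p(n+1)=\Gc(K)$.

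First I would record the monotonicity that drives everything: $\Fc(K)=(1-\tfrac{2}{D})(\tfrac{1}{2}-b)+2a-K$ is strictly decreasing in $K$ by inspection, and $\Gc(K)$ is strictly decreasing in $K$ on the range $\{\Fc(K)\geq 0\}$ by Lemma~\ref{lem:MonotoneG}(1). Consequently the admissible set $S:=\{K:\Fc(K)\geq 0,\ 2p(n+1)\leq\Gc(K)\}$ is a downward-closed interval: if $K\in S$ and $K'<K$, then $\Fc(K')\geq\Fc(K)\geq 0$, and since $\Fc\geq 0$ on all of $[K',K]$ the function $\Gc$ is decreasing there, so $\Gc(K')\geq\Gc(K)\geq 2p(n+1)$, giving $K'\in S$. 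This reduces the theorem to showing that $\sup S$ is exactly the solution of the equation in~(1).

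For the implication (1)$\Rightarrow$(2), I would assume $2p(n+1)=\Gc(K)$ together with the standing hypothesis $\Fc(K)\geq 0$. Then $K\in S$, so $K\leq \mathcal K_{G,n}(D)$. For any $K'>K$, either $\Fc(K')<0$, in which case $\CCD(K',D,n)$ fails outright, or $\Fc(K')\geq 0$, in which case strict monotonicity gives $\Gc(K')<\Gc(K)=2p(n+1)$ and again $K'\notin S$. Hence no $K'>K$ lies in $S$, so $\mathcal K_{G,n}(D)\leq K$, and equality follows. For (2)$\Rightarrow$(1), I set $K=\mathcal K_{G,n}(D)=\sup S$; by downward-closedness every $K'<K$ lies in $S$, and letting $K'\uparrow K$ and invoking continuity of $\Gc$ yields $2p(n+1)\leq\Gc(K)$, so $K\in S$ and the supremum is attained. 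If this inequality were strict, then by continuity and strict monotonicity a slightly larger $K''>K$ would still satisfy $2p(n+1)\leq\Gc(K'')$; since the optimum lies in the interior of $\{\Fc>0\}$ (justified in the final paragraph) one would keep $\Fc(K'')\geq 0$ as well, so $K''\in S$, contradicting $K=\sup S$. Therefore $2p(n+1)=\Gc(K)$.

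The step I expect to be the main obstacle is the claim used in (2)$\Rightarrow$(1) that the optimal $K$ lies strictly inside $\{\Fc>0\}$ rather than on the boundary $\Fc=0$, together with the continuity of $\Gc$ at the relevant point. When $n-1\in V$ one has $d_-(n)>0$, hence $b<\tfrac{1}{2}$, so the numerator $(1-\tfrac{2}{D})^2(\tfrac{1}{2}-b)$ is positive (for $D\neq 2$) and $\Gc(K)\to-\infty$ as $\Fc(K)\downarrow 0$; this forces the binding constraint at the supremum to be the $\Gc$-inequality and keeps the optimum away from $\Fc=0$. When $n-1\notin V$ the $\Fc$-constraint is absent by convention and $\Gc(K)=(1-\tfrac{2}{D})-K$ is affine and finite, so $2p(n+1)=\Gc(K)$ has a unique root which is directly $\sup S$. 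I would treat these two regimes separately, checking the degenerate conventions for $\Gc$ (vanishing numerator or denominator) in the boundary analysis so that the limiting argument is rigorous.
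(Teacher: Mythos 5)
Your proof is correct and takes essentially the same route as the paper: the paper's entire proof is the one-line observation that the claim follows from Theorem~\ref{thm:CDcharNormalized} together with the strict monotonicity of $\Gc$ in $K$ (Lemma~\ref{lem:MonotoneG}), which is exactly the mechanism you exploit via the downward-closed admissible set of curvature bounds. Your extra care at the boundary $\Fc=0$ (where $\Gc\to-\infty$ when $n-1\in V$, so $b<\tfrac12$, and $D\neq2$) and in the case $n-1\notin V$ merely spells out details the paper leaves implicit.
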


\begin{proof}
	This follows from Theorem~\ref{thm:CDcharNormalized} and strict monotonicity  of $\mathcal G$ in $K$.
\end{proof}

\begin{rem}
	For $n=0$, the value $p(n-1)$ is undefined. However since $p(0)=\frac 1 2$, the value $\Gc(p(n-1),p(n),\mathcal K_{G,n}(D),D)$ does not depend on $p(n-1)$ as stated in Lemma~\ref{lem:MonotoneG}.
\end{rem}

We remind for linear graphs $G$, we write $d_+(x)=w(x+1,x)/m(x)$. We write $d_+^{G}:=d_+$ to indicate the underlying graph $G$.


\begin{proof}[Proof of Theorem~\ref{thm:comparison}]
	We write $p_0(n):=\frac{w_0(n,n+1)}{m_0(n)} - \frac 1 2$ and
	$p(n):=\frac{w(n,n+1)}{m(n)} - \frac 1 2.$
	
	We prove via induction that $p_0(n) \geq p(n)$ and $p_0(n) \geq 0$ for all $n \in \IN \cap V$.
	
	We have $p_0(0)=\frac 1 2 \geq p(0)$. We can assume without obstruction that $p_0(-1)=\frac 1 2 \geq p(-1)$ since the curvature of $G_0$ at $n=0$ does not depend on $p(-1)$ and $p_0(-1)$.

	Moreover if we assume $0 \leq p_0(n-1) \geq p(n-1)$ and $p_0(n) \geq p(n)$ for some $n \geq 0$, then Lemma~\ref{lem:MonotoneG} and Theorem~\ref{thm:CharModelSpace}  yield
	\begin{align*}
	2p(n+1) &= \Gc(p(n-1),p(n),\mathcal K_{G,n}(\mathcal N (n)),\mathcal N(n)) \\
	&\leq \Gc(p_0(n-1),p(n),\mathcal K_{G_0,n}(\mathcal N(n)),\mathcal N(n)) \\
	&\leq \Gc(p_0(n-1),p_0(n),\mathcal K_{G_0,n}(\mathcal N(n)),\mathcal N(n)) \\	
	&=2p_0(n+1)
	\end{align*}
where the first inequality follows from the first part of Lemma~\ref{lem:MonotoneG} and the second inequality from the second part of Lemma~\ref{lem:MonotoneG} using $p_0(n-1) \geq 0$ and $K_{G_0,n}(\mathcal N(n)) \leq 0$ and $\mathcal N(n) \geq 2$.
This proves $p_0(n+1) \geq p(n+1)$. Furthermore, 
$$
2p_0(n+1)= \Gc(p_0(n-1),p_0(n),\mathcal K_{G_0,n}(\mathcal N(n)),\mathcal N(n)) \geq 0
$$
where the inequality follows from the second part of Lemma~\ref{lem:MonotoneG} and by induction assumption $p_0(n-1)\geq 0$.
The finishes the induction and proves $d_+^{G_0} \leq d_+^G$.	
	Hence, 
\begin{align*}
\frac {m_0(y)}{m_0(x)} 
&=\frac{\frac 1 2 + p_0(x)}{\frac 1 2 - p_0(x+1)} \cdot \ldots \cdot \frac{\frac 1 2 + p_0(y-1)}{\frac 1 2 - p_0(y)}
\\&\leq \frac{\frac 1 2 + p(x)}{\frac 1 2 - p(x+1)} \cdot \ldots \cdot \frac{\frac 1 2 + p(y-1)}{\frac 1 2 - p(y)} \\&= \frac {m(y)}{m(x)}
\end{align*}
which finishes the proof.	
\end{proof}



We give the model space for $\mathcal K_{G,x}(D)=0$  for all $x \in V$ and given $D > 2$. By Bishop-Gromov theorem, this will give us sharp volume growth bounds.

\begin{defn}
	Let $D>2$.  We define the linear normalized graph $G_D:=(\IN,w_D,m_D)$ where
	$w_D$ and $m_D$ are uniquely determined by
	\begin{align*}
	p_D(n):=d_+^{G_D}(n) - \frac 1 2 = \frac{D-2}{2D+4(n-1)}.
	\end{align*}
	
\end{defn}

\begin{rem}
	Indeed, $G_D$ is well defined since $d_+(0)=1$.
\end{rem}

We will show that $G_D$ is a model space with $K=0$ and dimension $D$ which will allow us to give sharp volume growth bounds.

\begin{theorem}\label{thm: model space D const}
Let $D>2$. Then, $\mathcal K_{G_D,n}(D)=0$ for all $n \in \IN$.
\end{theorem}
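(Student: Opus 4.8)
The plan is to reduce the claim to the characterization of the optimal curvature bound provided by Theorem~\ref{thm:CharModelSpace}. Since $n+1 \in \IN$ always holds, that theorem tells us that $\mathcal K_{G_D,n}(D) = 0$ is equivalent to the single identity
$$
2p_D(n+1) = \Gc(p_D(n-1), p_D(n), 0, D),
$$
provided the side condition $\Fc(p_D(n-1), p_D(n), 0, D) \geq 0$ holds (and for $n = 0$ this condition is vacuous, since $n-1 \notin \IN$). Thus the whole proof comes down to verifying this recursion for the explicitly prescribed sequence $p_D$.

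Next I would set up notation to tame the algebra. Writing $s := D - 2 > 0$ and $c_n := 2n + s$, the definition becomes $p_D(n) = \frac{s}{2c_n}$, so that $\frac 12 - p_D(n) = \frac{n}{c_n}$ and $\frac 12 + p_D(n) = \frac{n+s}{c_n}$, while $1 - \frac 2D = \frac{s}{s+2}=:\theta$. In particular $p_D(0) = \frac 12$, which both shows $G_D$ is well defined at the boundary and, by the remark following Theorem~\ref{thm:CharModelSpace}, makes $\Gc$ independent of the undefined value $p_D(-1)$ when $n=0$. The side condition is then immediate: for $n \geq 1$ both summands of $\Fc = \theta\,\frac{n}{c_n} + \frac{s}{c_{n-1}}$ are strictly positive, so $\Fc > 0$.

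The heart of the argument, and the only place requiring care, is the explicit evaluation of $\Gc$. Substituting the above expressions into Definition~\ref{def:F,G} and clearing denominators by multiplying through by $c_n c_{n-1}$, the bracketed factor collapses to $\frac{\theta s c_n}{\theta n c_{n-1} + s c_n}$, and after cancelling $\theta$ one is left with
$$
\Gc(p_D(n-1), p_D(n), 0, D) = \frac{s(n+s)}{n c_{n-1} + (s+2)c_n}.
$$
It then remains to check the polynomial identity $(n+s)(2n + s + 2) = n c_{n-1} + (s+2)c_n$, both sides of which expand to $2n^2 + 3ns + 2n + s^2 + 2s$; this yields $\Gc = \frac{s}{c_{n+1}} = 2p_D(n+1)$, as required. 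I expect this final cross-multiplication to be the main, though entirely routine, obstacle, since the rational expression for $\Gc$ is the most intricate ingredient; the substitution $c_n = 2n + s$ is precisely what makes the cancellations transparent.
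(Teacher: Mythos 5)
Your proposal is correct and follows essentially the same route as the paper: both reduce the claim via Theorem~\ref{thm:CharModelSpace} to verifying the recursion $2p_D(n+1)=\Gc(p_D(n-1),p_D(n),0,D)$ and then check it by direct algebra, your substitution $s=D-2$, $c_n=2n+s$ being merely a notational streamlining of the paper's computation. Your explicit verification of the side condition $\Fc\geq 0$ (left implicit in the paper) and the handling of $n=0$ via $p_D(0)=\tfrac12$ are both sound.
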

\begin{proof}
By Theorem~\ref{thm:CharModelSpace}, it suffices to show for all $n\geq 0$
\begin{align*}
0< 2p_D(n+1) &= \Gc(p_D(n-1),p_D(n),0,D) \\& 
= \left(\frac 1 2 + b \right) \cdot \frac{2a\left( 1-\frac 2 D\right)}{\left(1 - \frac 2 D  \right)\left(\frac 1 2 - b \right) + 2a}
\end{align*}
with $a:=p_D(n-1)$ and $b:=p_D(n)$ (see the defintion of the function $\Gc$ in Definition~\ref{def:F,G}).
We have
$\frac 1 2 + b = \frac{D+n-2}{D+2(n-1)}$ and $\frac 1 2 - b = \frac n {D+ 2 (n-1)}$ and $2a = \frac{D-2}{D+2(n-2)}$.
Hence,
\begin{align*}
&\left(\frac 1 2 + b \right) \cdot \frac{2a\left( 1-\frac 2 D\right)}{\left(1 - \frac 2 D  \right)\left(\frac 1 2 - b \right) + 2a} \\
=&\frac{D+n-2}{D+2(n-1)} \cdot \frac{\frac{D-2}{D+2(n-2)} \left(1 - \frac 2 D \right)}{\left(1 - \frac 2 D \right) \frac n {D+ 2 (n-1)}   + \frac{D-2}{D+2(n-2)}} \\
=&(D+n-2) \frac{D-2}{n(D+2(n-2)) + D(D+2(n-1))}\\
=&(D+n-2) \frac{D-2}{(n+D)(D+2n) -4n +2D}\\
=&\frac{D-2}{D+2n}\\
=&2p_D(n+1)
\end{align*}
as desired.
\end{proof}

We remind, for a linear, normalized graph $G=(\IN,w,m)$, the value $p(n)=d_+(n) - \frac 1 2 = \frac{w(n+1,n) + w(n-1,n)}{m(n)} - \frac 1 2$ determines the volume growth.
Positive $p$ means increasing measure $m$ and negative $p$ means decreasing $m$. If $p(n)>C>0$, then the volume grows exponentially. The following theorem gives upper bounds for $p$ under a curvature dimension condition.


\begin{theorem}\label{thm:pnExplicitStrong} 
Let $G=(V,w,m)$ be a linear normalized graph satisfying $\CCD(0,D)$ for some $D>2$. 
Suppose $\inf V \leq 0$.
Then
for all $n \in \IN \cap V$, one has	
\begin{align*}
p(n) \leq p_{D}(n)=\frac{D-2}{2D+4(n-1)}.
\end{align*}
\end{theorem}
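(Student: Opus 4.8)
The plan is to deduce this theorem directly from the Bishop--Gromov comparison theorem (Theorem~\ref{thm:comparison}), using the explicit zero-curvature model space $G_D$ as the comparison object. First I would fix the constant dimension function $\mathcal N \equiv D$ and invoke Theorem~\ref{thm: model space D const}, which gives $\mathcal K_{G_D,x}(D) = 0$ for every $x \in \IN$. Since $0 \leq 0$, this shows that $G_D$ is a model space with respect to $\mathcal N \equiv D$ in the sense of Definition~\ref{def:model}; note that $D > 2$ guarantees that $\mathcal N$ takes values in $[2,\infty]$, as required.

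The second step is to check the comparison hypothesis $\mathcal K_{G_D,x}(\mathcal N(x)) \leq \mathcal K_{G,x}(\mathcal N(x))$ for all $x \in \IN \cap V$. By assumption $G$ satisfies $\CCD(0,D)$, hence $\CCD(0,D,x)$ holds at every vertex, so by the definition of $\mathcal K_{G,x}(D)$ as the supremum of admissible curvature bounds we have $\mathcal K_{G,x}(D) \geq 0 = \mathcal K_{G_D,x}(D)$. Together with the standing assumption $\inf V \leq 0$, all hypotheses of Theorem~\ref{thm:comparison} are met. Applying it with $G_0 = G_D$ yields $d_+^{G_D}(x) \geq d_+^{G}(x)$ for all $x \in \IN \cap V$. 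Subtracting $\tfrac 1 2$ from both sides and using $p = d_+ - \tfrac 1 2$ together with $p_D(n) = \tfrac{D-2}{2D+4(n-1)}$ gives precisely $p(n) \leq p_D(n)$, as claimed.

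Since the entire content is packaged into Theorem~\ref{thm:comparison}, the argument is essentially a bookkeeping exercise once the model space is identified, so there is no real obstacle beyond correctly matching the hypotheses. If one instead wanted a self-contained argument, I would reproduce the induction from the proof of Theorem~\ref{thm:comparison} specialized to $G_D$: using Theorem~\ref{thm:CharModelSpace} one writes $2p_D(n+1) = \Gc(p_D(n-1),p_D(n),0,D)$, while $\CCD(0,D,n)$ on $G$ combined with Theorem~\ref{thm:CDcharNormalized} gives $2p(n+1) \leq \Gc(p(n-1),p(n),0,D)$; one then propagates $p(n+1) \leq p_D(n+1)$ via monotonicity of $\Gc$ in its first two arguments (Lemma~\ref{lem:MonotoneG}). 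The one point needing care in this induction is maintaining the auxiliary sign condition $p_D(n-1) \geq 0$, which is required to apply monotonicity of $\Gc$ in the $b$-slot; here it holds automatically because $p_D(n-1) = \tfrac{D-2}{2D+4(n-2)} > 0$ for $D > 2$.
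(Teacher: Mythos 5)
Your proposal is correct and follows exactly the paper's own argument: the paper's proof of this theorem is the one-line observation that it "directly follows from Theorem~\ref{thm: model space D const} and Theorem~\ref{thm:comparison}," which is precisely your application of the comparison theorem with $G_0 = G_D$ and $\mathcal N \equiv D$. Your explicit verification of the model-space hypothesis and of $\mathcal K_{G,x}(D) \geq 0$ simply spells out details the paper leaves implicit.
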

\begin{proof}
This directly follows from Theorem~\ref{thm: model space D const}	and Theorem~\ref{thm:comparison}.
\end{proof}

We now translate the growth rate $p$ into the growth of the measure and prove Theorem~\ref{thm: measure growth}.

\begin{proof}[Proof of Theorem~\ref{thm: measure growth}]
	Due to induction principle, it suffices for the first part to prove \begin{align*}
	\frac {m(n+1)}{m(n)} \leq \left(\frac{n+2}{n+1} \right)^{D-2}
	\end{align*}
	for all $n \in \IN$.	
	By assumption, $D\geq 4$ and thus, $D-3 \geq 1$ and $D+2n-2 \geq 2(n+1)$. Hence, 
\begin{align*}
	\frac {m(n+1)}{m(n)} &= \frac {d_+(n)}{d_-(n+1)} = \frac{\frac 1 2 + p(n)}{\frac 1 2 - p(n+1)} \leq \frac{\frac 1 2 + p_{D}(n)}{\frac 1 2 - p_{D}(n+1)} \\ 
	&= \frac{1+ \frac{D-2}{D+2n-2}}{1 - \frac{D-2}{D+2n}}\\
	&=1 + \frac{D-2}{n+1} + \frac{D-2}{(D+2n-2)(n+1)}\\
	&\leq 1 + \frac{D-2}{n+1} + \frac{(D-2)(D-3)}{2(n+1)^2}\\
	&\leq \left(1+\frac{1}{n+1} \right)^{D-2}\\
	&=\left(\frac{n+2}{n+1} \right)^{D-2}.
	\end{align*}
	We now show optimality of the exponent $D-2$.
	Let $D' < D$ and suppose $G=G_D$. By the above calculation, we have
	\begin{align*}
	\frac{m(n+1)}{m(n)} = 1 + \frac{D-2}{n+1} + \frac{D-2}{(D+2n-2)(n+1)} \geq 1 + \frac{D-2}{n+1} > \left(\frac{n+2 } {n+1} \right)^{D'-2}
	\end{align*}
	for large $n$.
	This shows optimality of the exponent $D-2$.

	It is left to show that $m$ is non-decreasing.
	To do so, we consider the reflected graph $\widetilde G = (\widetilde V, \widetilde w, \widetilde m)$ given by renaming the vertices via an isomorphism $\Phi: V\to \widetilde V, n \mapsto C-n$ for some fixed $C \in \IN$. We denote $\widetilde d_+$ and $\widetilde d_-$ and $\widetilde p$ accordingly.
	Observe $\widetilde p(C-n) = -p(n)$ since $\widetilde d_+(C-n) = d_-(n)$.
	Moreover, $\inf(\widetilde V) = -\infty \leq 0$ due to the assumption that $\sup V = \infty$.
Applying Theorem~\ref{thm:pnExplicitStrong} to $\tilde G$ which also satisfies $\CCD(0,D)$ yields
$$-p(n) = \widetilde p(C-n) \leq p_D(C-n) \stackrel{C \to \infty}{\longrightarrow} 0.$$
This proves $p(n) \geq 0$ which immediately implies that $m$ is non-decreasing. This finishes the proof.	
\end{proof}

\begin{proof}[Proof of Theorem~\ref{thm:LiouvilleNormaized}]
Due to Theorem~\ref{thm: measure growth}, we have that $m(n)$ is non-decreasing in $n$. Due to symmetry of the vertex set $\IZ$, this also tells us
that $m(n)$ is non-increasing in $n$ and therefore needs to be constant. This immediately implies that the edge weights are the same for the whole graph.
\end{proof}

\begin{proof}[Proof of Corollary~\ref{cor:Ellipticity}]
We aim to show $w(x,y)/m(x) \geq 1/D$ whenever $x\sim y$.
since $m(x)$ is non-decreasing, we can assume without obstruction that $y=x-1 \geq 0$.
Due to Theorem~\ref{thm:pnExplicitStrong}, we have
\begin{align*}
\frac {w(x,x-1)}{m(x)} = d_-(x) = \frac 1 2 - p(x) &\geq \frac 1 2 - \frac{D-2}{2D+4(x-1)}\\
& \geq \frac 1 2 - \frac{D-2}{2D+4(1-1)}\\
&=\frac 1 D.
\end{align*}
This finishes the proof.
\end{proof}

In order to prove the volume doubling property, we define the sphere doubling property which turns out to be sharply preserved when taking Cartesian products.
It will also imply the volume doubling and follow from $\CCD(0,D)$.

\begin{defn}
	We say, a graph $G=(V,w,m)$ satisfies the sphere doubling property, with constant $C$ (called $\mathcal{SD}(C)$), if for all $i,j \in \IN$ with $j \leq 2i+1$ and all $x_0 \in V$, one has 
	\begin{align*}
		m(S_{j}(x_0)) \leq C	m(S_i(x_0)). 
	\end{align*}
\end{defn}

\begin{theorem}[Sphere doubling implies volume doubling]\label{thm: SD implies VD}
	Let $G=(V,w,m)$ be a graph with $\mathcal{SD}(C)$. Then, $G$ satisfies $\mathcal{VD}(2C)$.
\end{theorem}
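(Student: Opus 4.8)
The plan is to move from balls to spheres using the fact that the combinatorial distance is integer-valued, so that every ball decomposes as a disjoint union of spheres: for each integer $r\geq 0$ and each $x\in V$, one has $B_r(x)=\bigsqcup_{i=0}^{r}S_i(x)$ and hence $m(B_r(x))=\sum_{i=0}^{r}m(S_i(x))$. Since balls depend only on the integer part of the radius ($B_R(x)=B_{\lfloor R\rfloor}(x)$), I would first reduce the claim to integer radii. Writing $r=\lfloor R\rfloor$, the worst case is $\lfloor 2R\rfloor = 2r+1$, so it suffices to establish the single estimate
$$
m(B_{2r+1}(x)) \leq 2C\, m(B_r(x)) \qquad \text{for all } r\in\IN,\ x\in V,
$$
from which $m(B_{2R}(x))=m(B_{\lfloor 2R\rfloor}(x))\leq m(B_{2r+1}(x))\leq 2C\,m(B_r(x))=2C\,m(B_R(x))$ follows.

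The heart of the argument is then a pairing of the spheres appearing in $B_{2r+1}(x)$. For an index $j\in\{0,1,\dots,2r+1\}$ set $k=\lfloor j/2\rfloor\in\{0,\dots,r\}$. Both the even case $j=2k$ and the odd case $j=2k+1$ satisfy $j\leq 2k+1$, which is exactly the hypothesis under which $\mathcal{SD}(C)$ applies, giving $m(S_j(x))\leq C\,m(S_k(x))$ in each case. As $j$ runs over $\{0,\dots,2r+1\}$, each value $k\in\{0,\dots,r\}$ is hit by precisely two indices (namely $j=2k$ and $j=2k+1$). Summing over $j$ therefore yields
$$
m(B_{2r+1}(x)) = \sum_{j=0}^{2r+1} m(S_j(x)) \leq C\sum_{j=0}^{2r+1} m(S_{\lfloor j/2\rfloor}(x)) = 2C\sum_{k=0}^{r} m(S_k(x)) = 2C\, m(B_r(x)),
$$
which is the desired inequality.

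I do not expect a serious obstacle here; the content is essentially bookkeeping. The one point that requires care, and which explains the precise form of the definitions, is the reduction to the radius $2r+1$ rather than $2r$: the off-by-one comes from $\lfloor 2R\rfloor$ possibly equalling $2\lfloor R\rfloor+1$, and it is exactly to absorb this that the sphere doubling property is stated with the condition $j\leq 2i+1$. The factor $2$ in the conclusion $\mathcal{VD}(2C)$ then arises transparently from the fact that each sphere $S_k(x)$ is used to dominate two of the spheres in $B_{2r+1}(x)$. The only remaining routine check is the disjoint-union decomposition of balls into spheres, which is immediate once one notes that $B_R(x)=B_{\lfloor R\rfloor}(x)$ for integer-valued distances.
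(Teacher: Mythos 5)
Your proof is correct and is essentially the paper's own argument: the paper likewise writes $m(B_{2R+1}(x))=\sum_{k=0}^{R}\bigl(m(S_{2k}(x))+m(S_{2k+1}(x))\bigr)\leq \sum_{k=0}^{R}2C\,m(S_k(x))=2C\,m(B_R(x))$, which is exactly your pairing of $S_{2k}$ and $S_{2k+1}$ with $S_k$ via the hypothesis $j\leq 2i+1$ in $\mathcal{SD}(C)$. Your only addition is spelling out the reduction from real radii $R$ to the integer case via $B_R(x)=B_{\lfloor R\rfloor}(x)$ and $\lfloor 2R\rfloor\leq 2\lfloor R\rfloor+1$, a bookkeeping step the paper leaves implicit.
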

\begin{proof}
	We have
	\begin{align*}
		m(B_{2R+1}(x)) &= \sum_{k=0}^R m(S_{2k}(x)) + m(S_{2k+1}(x)) \\ 
		&\leq  \sum_{k=0}^R 2Cm(S_k(x)) \\&= 2Cm(B_R(x))
	\end{align*}
	which finishes the proof.
\end{proof}

\begin{theorem}[Non-negative curvature implies sphere doubling]\label{thm: CD implies SD} 
	Let $G=(\IN,w,m)$ be an infinite linear connected normalized graph satisfying $\CCD(0,D)$ for some $D\geq4$. Then, $G$ satisfies $\mathcal{SD}(2^{D-2})$.
\end{theorem}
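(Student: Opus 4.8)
The plan is to derive sphere doubling directly from the measure growth estimate already established in Theorem~\ref{thm: measure growth}. For a linear graph supported on $\IN$, the sphere $S_k(x_0)$ centered at $x_0$ consists of at most two vertices, namely $x_0 + k$ and $x_0 - k$ (when the latter lies in $V$). Thus $m(S_k(x_0))$ is, up to the bounded additive contribution of these one or two points, governed by the ratios $m(j)/m(i)$ which Theorem~\ref{thm: measure growth} controls by $\left((j+1)/(i+1)\right)^{D-2}$. So the first step is to reduce the sphere doubling inequality $m(S_j(x_0)) \leq C\, m(S_i(x_0))$ for $j \leq 2i+1$ to a comparison of single vertex weights $m(x_0 \pm j)$ against $m(x_0 \pm i)$.

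Next I would exploit the monotonicity of $m$ (also from Theorem~\ref{thm: measure growth}, which gives $p \geq 0$, hence $m$ non-decreasing). Because $m$ is non-decreasing, the largest vertex on a sphere dominates, and the worst case for the doubling ratio is realized when $x_0$ is the leftmost vertex, i.e. $x_0 = 0$, so that $S_k(0) = \{k\}$ is a single vertex and $m(S_k(0)) = m(k)$. The sphere doubling bound then becomes exactly $m(j) \leq C\, m(i)$ for $j \leq 2i+1$. Applying Theorem~\ref{thm: measure growth} gives
\begin{align*}
\frac{m(j)}{m(i)} \leq \left( \frac{j+1}{i+1} \right)^{D-2} \leq \left( \frac{2i+2}{i+1} \right)^{D-2} = 2^{D-2},
\end{align*}
which is precisely the claimed constant $\mathcal{SD}(2^{D-2})$. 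For a general center $x_0$, I would argue that shifting the center rightward only decreases the doubling ratio, since the measure ratios become milder as one moves into the region of slower relative growth; monotonicity of $m$ together with the explicit form of the bound handles this.

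The main obstacle I anticipate is the careful bookkeeping for spheres that straddle the center, i.e. when both $x_0 - k$ and $x_0 + k$ lie in $V$, so that $m(S_k(x_0))$ is a sum of two terms rather than one. One must verify that the two-term sums still satisfy the doubling bound with the same constant; the cleanest route is to note that $m$ non-decreasing forces $m(x_0 - k) \leq m(x_0 + k)$ when $x_0 \geq k$, and to bound the sum for $S_j$ against the corresponding sum for $S_i$ termwise using Theorem~\ref{thm: measure growth} applied to each pairing. The index constraint $j \leq 2i+1$ is exactly what makes $(j+1)/(i+1) \leq 2$ work out, so I would track that inequality carefully. I expect no genuinely hard analytic step here: the content is entirely in Theorem~\ref{thm: measure growth}, and this proof is the bridge from measure growth to the doubling formulation needed for Delmotte's characterization.
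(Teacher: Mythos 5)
Your overall route---deriving $\mathcal{SD}$ from Theorem~\ref{thm: measure growth} (the measure-ratio bound together with $p\geq 0$, hence $m$ non-decreasing) via a comparison of the one- or two-point spheres---is the same as the paper's, but your execution has a genuine gap. The reduction ``the worst case is $x_0=0$'' is unjustified, and the heuristic you offer for it (``shifting the center rightward only decreases the doubling ratio'') is already false in the weak sense on the half-line with constant edge weights, which satisfies $\CCD(0,D)$ for all $D\geq 2$: there $m(0)=w$ and $m(n)=2w$ for $n\geq 1$, so with $i=0$, $j=1$ the ratio $m(S_1(x_0))/m(S_0(x_0))$ equals $2$ both at $x_0=0$ and at every $x_0\geq 2$---it does not decrease as the center moves inward. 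Whether or not the worst case is in fact at $x_0=0$, that claim would itself require a proof at least as involved as the direct case analysis, so it cannot be waved through.

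More concretely, your termwise pairing breaks in exactly one configuration: $i=0$, $j=1$, $x_0\geq 1$. Then $S_0(x_0)=\{x_0\}$ is a single vertex while $S_1(x_0)=\{x_0-1,x_0+1\}$ has two, so no pairing exists, and the bounds available to you, $m(x_0+1)\leq 2^{D-2}m(x_0)$ and $m(x_0-1)\leq m(x_0)$, only yield $m(S_1(x_0))\leq (2^{D-2}+1)\,m(x_0)$, which overshoots the target $2^{D-2}m(x_0)$. The paper isolates this case and uses the sharper ratio from Theorem~\ref{thm: measure growth},
\begin{align*}
\frac{m(x_0+1)}{m(x_0)} \leq \left(\frac{x_0+2}{x_0+1}\right)^{D-2} \leq \left(\frac 3 2\right)^{D-2} \qquad (x_0\geq 1),
\end{align*}
so that $m(S_1(x_0)) \leq \bigl(1+(3/2)^{D-2}\bigr)m(x_0) \leq 2^{D-2}m(x_0)$, and this last inequality is precisely where the hypothesis $D\geq 4$ enters. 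Tellingly, your sketch never uses $D\geq 4$; an argument along your lines would appear to work for all $D>2$, which should have signaled the missing case. In all other configurations your plan does go through and coincides with the paper's: for $1\leq i\leq j\leq 2i+1$ pair $m(x_0+j)\leq \bigl((x_0+j+1)/(x_0+i+1)\bigr)^{D-2}m(x_0+i)\leq 2^{D-2}m(x_0+i)$ (valid since $x_0+j+1\leq 2(x_0+i+1)$) with $m(x_0-j)\leq m(x_0-i)$ by monotonicity, and handle the degenerate case $j\leq i$ by $m(S_j(x_0))\leq 2m(x_0+i)\leq 2^{D-2}m(S_i(x_0))$, again using $D\geq 4$.
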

\begin{proof}
	We have to show $m(S_j(x)) \leq 2^{D-2}m(S_i(x))$ for all $x\in V$ whenever $0 \leq j \leq 2i+1$.
	For convenience, we set $m(x):=0$ for $x<0$. Thus by Theorem~\ref{thm: measure growth}, we see that $m$ is non-decreasing on $\IZ$.
	
	We prove by case analysis. W.l.o.g., $i\neq j$.
	First, suppose $i=0$. Then we may assume $j = 1$.
	If $x=0$, we have $m(S_j(x))=m(1) \leq 2^{D-2} m(0) = 2^{D-2} m(S_i(x))$.
	If $i=0$ and $x\geq 1$, we have 
	\begin{align*}
		m(S_j(x))&=m(x+1) + m(x-1) \leq m(x+1) +m(x) \leq \left[1+\left(\frac{x+2}{x+1}\right)^{D-2} \right] m(x) \\
		&\leq  \left[1+\left(\frac{3}{2}\right)^{D-2} \right] m(x)	
		\leq 2^{D-2} m(x) = 2^{D-2}m(S_i(x)).
	\end{align*}
	
	For $2i+1 \geq j\geq i\geq 1$, we have by Theorem~\ref{thm: measure growth},
	\begin{align*}
		m(S_j(x)) = m(x+j)+m(x-j) &\leq \left(\frac{x+j+1}{x+i+1}\right)^{D-2} m(x+i) + m(x-i) \\&\leq 
		2^{D-2} m(x+i)+ m(x-i) \\&\leq 2^{D-2}m(S_i(x)).
	\end{align*}
	
	For the degenerated case $j \leq i$, we have
	\begin{align*}
		m(S_j(x)) \leq  m(x+j) + m(x-j) \leq 2m(i) \leq 2^{D-2}m(S_i(x)).
	\end{align*}
	
	Thus we have considered all cases which finishes the proof.
\end{proof}

By Theorem~\ref{thm: CD implies SD}, the $\CCD$ condition implies $\mathcal{SD}$ and by Theorem~\ref{thm: SD implies VD}, $\mathcal{SD}$ implies $\mathcal{VD}$. Hence, the $\CCD$ condition implies the volume doubling as stated in the following corollary.

\begin{proof}[Proof of Corollary~\ref{cor:CDimpliesVD}]
By Theorem~\ref{thm: CD implies SD} , $\CCD(0,D)$ implies $\mathcal{SD}(2^{D-2})$.
Furthermore, by Theorem~\ref{thm: SD implies VD}, $\mathcal{SD}(2^{D-2})$ implies $\mathcal{VD}(2^{D-1})$. 
 \end{proof}

It turns out that $\mathcal{SD}$ is better compatible with taking Cartesian products than $\mathcal{VD}$ since the product of balls is a $l_\infty$ ball and the ball in the product is a $l_1$ ball and there seems to be no possibility to sharply compare these different balls.

\begin{defn}
	Let $G_i=(V_i,w_i,m_i)$ be graphs ($i=1,2$).
	We write $G_1 \times G_2 := (V_1 \times V_2, w_{12},m_{12})$ with
	\begin{align*}
	w_{12}((x_1,x_2),(y_1,y_2)) := w_1(x_1,y_1) 1_{x_2=y_2} + 
	w_1(x_2,y_2) 1_{x_1=y_1}
	\end{align*}
	and
	$m_{12}(x_1,x_2) := m_1(x_1) m_2(x_2)$.
\end{defn}

We now show that taking Cartesian products preserves sphere doubling.
\begin{theorem}
	Let $G_1,G_2$ be graphs with $\mathcal{SD}(C_1)$ and $\mathcal{SD}(C_2)$ respectively. Then, the Cartesian product $G_1 \times G_2$ satisfies $\mathcal{SD}(2C_1C_2)$.
\end{theorem}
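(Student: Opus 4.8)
The plan is to reduce the sphere-doubling constant for the product to a statement about spheres in the factors by analyzing how the $\ell^1$-sphere $S_k(x_1,x_2)$ in $G_1 \times G_2$ decomposes. Recall that in a Cartesian product the combinatorial distance is additive, so
\[
S_k((x_1,x_2)) = \bigcup_{a+b=k} S_a(x_1) \times S_b(x_2),
\]
and since $m_{12}$ is a product measure this gives the exact identity
\[
m(S_k((x_1,x_2))) = \sum_{a+b=k} m_1(S_a(x_1))\, m_2(S_b(x_2)).
\]
So the whole argument rests on comparing such convolution-type sums for $j \leq 2i+1$.

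The main step is to take a pair $(a,b)$ with $a+b=j$ appearing in the expression for $m(S_j)$ and map it to a pair contributing to $m(S_i)$ whose measure dominates it up to the factor $2C_1C_2$. First I would split $j = a+b$ according to whether $a \leq 2\lfloor i/2\rfloor +1$ style bounds hold; the natural move is to pick $a' \leq i$ and $b' \leq i$ with $a'+b' = i$ such that $a \leq 2a'+1$ and $b \leq 2b'+1$. Whenever such a splitting exists, $\mathcal{SD}(C_1)$ and $\mathcal{SD}(C_2)$ give $m_1(S_a(x_1)) \leq C_1 m_1(S_{a'}(x_1))$ and $m_2(S_b(x_2)) \leq C_2 m_2(S_{b'}(x_2))$, so each product term is controlled by $C_1C_2$ times a term of $m(S_i)$. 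The factor of $2$ absorbs the fact that several pairs $(a,b)$ may be routed to the same target pair $(a',b')$, but a careful bookkeeping shows at most two source pairs land on any target, so the total is bounded by $2C_1C_2\, m(S_i((x_1,x_2)))$.

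The hard part will be verifying that the required splitting $(a',b')$ with $a'+b'=i$, $a \leq 2a'+1$, $b \leq 2b'+1$ always exists given only $a+b \leq 2i+1$, and organizing the assignment so that the multiplicity with which target pairs are hit stays bounded by $2$. This is an elementary but slightly fiddly combinatorial argument about writing integers as sums: from $a+b \leq 2i+1$ one wants to distribute $i$ as $a'+b'$ so that neither coordinate is starved. I would set $a' := \lceil (a-1)/2 \rceil \wedge i$ (and $b' := i - a'$), then check the constraint $b \leq 2b'+1$ follows from $a+b \leq 2i+1$, handling the boundary cases $a'=0$ and $a'=i$ separately. Once this counting lemma is in place, summing the per-term bounds over all $(a,b)$ with $a+b=j$ and re-indexing yields $\mathcal{SD}(2C_1C_2)$ directly, which is exactly the claimed constant.
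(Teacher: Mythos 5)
Your proposal is correct and takes essentially the same route as the paper: the paper likewise writes $m(S_j(x_1,x_2))=\sum_{k=0}^{j} m(S_k(x_1))\,m(S_{j-k}(x_2))$, applies sphere doubling factorwise by routing the $k$-th term to the pair $\left(\left\lceil \tfrac{k-1}{2}\right\rceil,\, i-\left\lceil \tfrac{k-1}{2}\right\rceil\right)$ --- exactly your choice $a'=\left\lceil \tfrac{a-1}{2}\right\rceil$, where the cap at $i$ is automatic since $k\leq 2i+1$ --- and obtains the factor $2$ because each target pair $(a',i-a')$ is hit exactly twice (by $k=2a'$ and $k=2a'+1$) after extending the sum to $k\leq 2i+1$. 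The counting lemma you flag as the fiddly step works out exactly as you predict, so your argument and the paper's coincide.
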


\begin{proof}
	Let $x_i \in V_i$ for $i=1,2$.
	We have
	\begin{align*}
		S_n{(x_1,x_2)} = \bigsqcup_{k=0}^n S_k(x_1) \times S_{n-k}(x_2).
	\end{align*}
	Let $i \in \IN$.
	Hence for $j\leq 2i+1$,
	\begin{align*}
		m(S_j{(x_1,x_2)}) &= \sum_{k=0}^j m(S_k(x_1)) m(S_{j-k}(x_2))\\
		&\leq \sum_{k=0}^j C_1 m\left(S_{\left\lceil \frac {k-1} {2} \right\rceil}(x_1)\right) C_2 m\left(S_{\left\lceil \frac {2i-k} {2} \right\rceil}(x_2)\right)\\
		&\leq \sum_{k=0}^{2i+1} C_1 m\left(S_{\left\lceil \frac {k-1} {2} \right\rceil}(x_1)\right) C_2 m\left(S_{\left\lceil \frac {2i-k} {2} \right\rceil}(x_2)\right)\\
		&=2C_1C_2 \sum_{k=0}^i m(S_k(x_1)) m (S_{i-k}(x_2))\\
		&=2C_1C_2 m (S_i(x_1,x_2))
	\end{align*}
\end{proof}

\begin{rem}
	The first inequality in the proof is sharp whenever the sphere doubling of $G_1$ and $G_2$ is sharp.
	The second inequality is sharp if $j=2i+1$.
\end{rem}

\subsection{Poincar\'e inequality}

We use the Cheeger-inequality $\lambda_1 \geq \frac 1 2 h^2$ to prove the Poincaré inequality (see Definition~\ref{def: poincare}). To do so, we will calculate
the Cheeger constant of subgraphs of the underlying linear graph.
More precisely, we calculate Cheeger constants of balls:

\begin{defn}
	Let $G=(V,w,m)$ be a graph and $W \subset V$. We define the restriction
	$G_W:=(W,w_W,m_W)$ with $w_W(x,y):=w(x,y)1_W(x)1_W(y)$ and $m_W:=m1_W$.
\end{defn}

In case of linear graphs, this means the following.
Let $A<B \in \IN$
We write $G_{A,B}:=G_{[A,B] \cap \IN} =(V_{A,B},w_{A,B},m_{A,B})$ with $V_{A,B}=[A,B]\cap \IN$ and $w_{A,B}(i,j)=w(i,j)1_{[A,B]}(i)1_{[A,B]}(j)$ and $m_{A,B}(i)=m(i)1_{[A,B]}(i)$.

For finite graphs $G=(V,w,m)$, we define the Cheeger constant
$h:=h(G):=\inf_{A \subset V} \frac{w(A,A^c)}{m(A)\wedge m(A^c)}$.

\begin{lemma}\label{lemma: Cheeger intervals}
	Let $G=(\IN,w,m)$ be a linear normalized graph with $p = d_+ -\frac 1 2\geq 0$. Let $A<B \in \IN$. Then,
	\begin{align*}
	h(G_{A,B}) \geq \frac 1 {2(B-A)}.
	\end{align*}
\end{lemma}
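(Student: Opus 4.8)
The plan is to bound the Cheeger ratio $w(S,S^c)/(m(S)\wedge m(S^c))$ from below by $\tfrac{1}{2(B-A)}$ for every nonempty proper subset $S$ of $V_{A,B}=[A,B]\cap\IN$, with complements taken inside $V_{A,B}$; taking the infimum then gives the claim. Two consequences of the hypothesis $p\geq 0$ will drive the argument. First, since the graph is normalized, $m(k+1)/m(k)=d_+(k)/d_-(k+1)=(\tfrac12+p(k))/(\tfrac12-p(k+1))\geq 1$, so $m$ is non-decreasing. Second, $w(k,k+1)=d_+(k)\,m(k)=(\tfrac12+p(k))\,m(k)\geq\tfrac12 m(k)$ for every $k$. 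These are the only two facts about the geometry I will need.

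The key observation is that at least one of $S$ and $S^c$ fails to contain the right endpoint $B$; I would call this set $T$, so that $T$ is nonempty, $B\notin T$, and $w(S,S^c)=w(T,T^c)$. Setting $b:=\max T$, one has $b\leq B-1$ and $b+1\notin T$, so the edge $\{b,b+1\}$ lies in $[A,B]$ and crosses from $T$ to $T^c$. Consequently $w(S,S^c)\geq w(b,b+1)\geq\tfrac12 m(b)$, producing a single, cleanly located boundary edge whose weight I can control.

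For the numerator-denominator comparison I would then note that $T\subseteq[A,b]\subseteq\{A,\dots,B-1\}$ has at most $B-A$ elements, each of measure at most $m(b)$ by monotonicity of $m$; hence $m(T)\leq(B-A)\,m(b)\leq 2(B-A)\,w(S,S^c)$. Since $m(S)\wedge m(S^c)\leq m(T)$, dividing yields $w(S,S^c)/(m(S)\wedge m(S^c))\geq\tfrac{1}{2(B-A)}$ for all admissible $S$, which is exactly $h(G_{A,B})\geq\tfrac{1}{2(B-A)}$. The one delicate point is that $S$ need not be an interval, so its boundary may consist of several edges; this is precisely the difficulty that the selection of the endpoint-avoiding piece $T$ circumvents, reducing the general subset to the interval-type estimate governed by a single boundary edge at $\max T$.
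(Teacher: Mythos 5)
Your proof is correct, and its core computation coincides with the paper's: from $p\geq 0$ you extract monotonicity of $m$ and the bound $w(k,k+1)=d_+(k)\,m(k)\geq \frac{1}{2}m(k)$, and then estimate the small side of the cut by $(B-A)$ times the measure at the boundary vertex, which is exactly the paper's chain $m([A,n])\leq (n-A+1)\,m(n)\leq 2(B-A)\,w(n,n+1)$. Where you genuinely differ is in the reduction to a single boundary edge. The paper simply asserts that one ``can assume that $A$ and $A^c$ are connected and non-empty,'' i.e.\ that it suffices to test interval cuts $[A,n]$ against $[n+1,B]$ --- a standard fact for path graphs, but one it leaves unjustified. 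You instead treat an arbitrary (possibly disconnected) subset $S$ directly: choosing the side $T\in\{S,S^c\}$ with $B\notin T$ guarantees, via $b=\max T\leq B-1$, a crossing edge $\{b,b+1\}$ inside $[A,B]$, whence $w(S,S^c)\geq w(b,b+1)\geq\frac{1}{2}m(b)$, while $m(S)\wedge m(S^c)\leq m(T)\leq (b-A+1)\,m(b)\leq (B-A)\,m(b)$ by monotonicity. Ignoring all boundary edges of $S$ except the one at $\max T$ only weakens the numerator, so the infimum over all admissible $S$ gives the claim. Your argument thus buys a self-contained proof of the reduction the paper takes for granted, at the cost of a few extra lines; the paper's version is shorter but implicitly relies on the reader knowing that Cheeger minimizers on a path can be taken to be intervals.
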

\begin{proof}
Since we can assume that $A$ and $A^c$ are connected and non-empty, this means for $G_{A,B}$ that

\begin{align*}
h(G_{A,B}) &= \inf_{n\in V_{A,B-1}} \frac{w(n,n+1)}{m([A,n]) \wedge m([n+1,B])}\\
&\geq \inf_{n\in V_{A,B-1}} \frac{w(n,n+1)}{m([A,n])}
\end{align*}

where we write $m([a,b]) :=m([a,b]\cap \IN)$.

Since $p\geq 0$, we have that $m$ is increasing which implies for $n \leq B-1$ that
$$m[A,n] \leq (n-A+1)m(n) = (n-A+1)\frac{w(n,n+1)}{d_+(n)} \leq 2(B-A)w(n,n+1).$$
Thus, 
\begin{align*}
h(G_{A,B}) \geq \frac 1 {2(B-A)}
\end{align*}
which finishes the proof.
\end{proof}
We use Cheeger's inequality $\lambda_1(G_{A,B}) \geq \frac 1 2 h(G_{A,B})^2$ to estimate the spectral gap which will give us the Poincaré inequality.


The next lemma shows that the Poincar\'e inequality holds true whenever the Cheeger constant of balls can be lower bounded by the radius.

\begin{lemma}\label{lemma: Cheeger Poincare}
	Let $G=(V,w,m)$ be a graph. Suppose there exists $c>0$ s.t. $h(G_{B_R(x_0)}) \geq c/R$. Then, $G$ satisfies the Poincar\'e inequality $P\left(1/c^2\right)$.
\end{lemma}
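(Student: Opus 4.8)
The plan is to derive the Poincar\'e inequality ball-by-ball by combining the Rayleigh-quotient description of the spectral gap with Cheeger's inequality. Fix $x_0 \in V$ and $R>0$ and set $W := B_R(x_0)$. Since $G$ is locally finite and connected, $W$ is a finite connected set, so the restriction $G_W$ is a finite connected weighted graph and its Laplacian has a positive first nonzero eigenvalue $\lambda_1(G_W)$. First I would record the variational characterization
\[
\lambda_1(G_W) = \inf_{f}\ \frac{\tfrac 1 2 \sum_{x,y \in W} w(x,y)\bigl(f(x)-f(y)\bigr)^2}{\sum_{x\in W} m(x)\bigl(f(x)-\bar f\bigr)^2},
\]
where the infimum runs over non-constant $f$ and $\bar f$ is the $m$-weighted mean of $f$ over $W$; here the numerator is exactly $\langle -\Delta f, f\rangle_m$ restricted to $W$. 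The key observation is that $\bar f$ is the minimizer of $c \mapsto \sum_{x\in W} m(x)(f(x)-c)^2$, so $\bar f = f_B$ in the notation of Definition~\ref{def: poincare}. Rearranging the Rayleigh quotient then gives, for every $f$,
\[
\sum_{x\in W} m(x)\bigl(f(x)-f_B\bigr)^2 \ \leq\ \frac{1}{2\lambda_1(G_W)}\sum_{x,y\in W} w(x,y)\bigl(f(x)-f(y)\bigr)^2.
\]

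Next I would feed in the curvature-free input. By Cheeger's inequality $\lambda_1(G_W) \geq \tfrac 1 2 h(G_W)^2$, and by hypothesis $h(G_W) = h(G_{B_R(x_0)}) \geq c/R$, so $\lambda_1(G_W) \geq c^2/(2R^2)$ and therefore $\tfrac{1}{2\lambda_1(G_W)} \leq R^2/c^2$. Finally, since $W = B_R(x_0) \subseteq B_{2R}(x_0)$ and every summand $w(x,y)(f(x)-f(y))^2$ is non-negative, I can enlarge the domain of the edge sum from $W$ to $B_{2R}(x_0)$, obtaining
\[
\sum_{x\in B_R(x_0)} m(x)\bigl(f(x)-f_B\bigr)^2 \ \leq\ \frac{R^2}{c^2}\sum_{x,y\in B_{2R}(x_0)} w(x,y)\bigl(f(x)-f(y)\bigr)^2,
\]
which is precisely the inequality $P(1/c^2)$ from Definition~\ref{def: poincare}.

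Each step is essentially routine once the objects are named, so I do not expect a genuine obstacle; the one point demanding care is the bookkeeping of the factor $\tfrac 1 2$. The Dirichlet form appearing in $\langle -\Delta f, f\rangle_m$ carries a factor $\tfrac 1 2$ in front of the double sum $\sum_{x,y} w(x,y)(f(x)-f(y))^2$, whereas Definition~\ref{def: poincare} uses the same double sum \emph{un-halved}; tracking this is exactly what converts the bound $\tfrac{1}{2\lambda_1(G_W)} \leq R^2/c^2$ into the clean constant $1/c^2$ rather than $2/c^2$. A secondary point to verify is that $G_W$ is connected, so that $\lambda_1(G_W)>0$ and the Rayleigh characterization is meaningful --- this holds because a geodesic from any $y\in B_R(x_0)$ to $x_0$ stays inside $B_R(x_0)$ --- and that the restriction convention for $G_W$ (retaining only edges internal to $W$) is the same convention under which both the Rayleigh quotient and the Cheeger constant $h(G_W)$ are computed.
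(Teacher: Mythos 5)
Your proof is correct and follows essentially the same route as the paper: Cheeger's inequality $\lambda_1(G_{B_R(x_0)}) \geq \tfrac12 h^2 \geq c^2/(2R^2)$ combined with the variational (min-max) characterization of the spectral gap applied to $g = (f-f_B)|_{B_R(x_0)}$, followed by enlarging the edge sum to $B_{2R}(x_0)$. Your added care about the factor $\tfrac12$ in the Dirichlet form and the connectedness of balls matches the bookkeeping implicit in the paper's argument.
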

\begin{proof}
	Due to Cheeger's inequality, we have 
	$$\lambda_1(G_{B_R(x_0)}) \geq  \frac 1 2 h(G_{B_R(x_0)})^2 \geq \frac {c^2}{2R^2}$$
	Let $x_0 \in V$ and $R>0$ and $f:V\to \IR$. We write
	$g:= (f-f_B)|_{B_R(x_0)}$.
	Then, $g$ is orthogonal to $1$ on $G_{B_R(x_0)}$.
	The Min-max principle tells us that $\mathcal{E}(g) \geq \lambda_1(G_{B_R(x_0)}) \left\| g \right\|_2^2$ with
	$\mathcal E (g):= \frac 1 2 \sum_{x,y\in B_{R}(x_0)}w(x,y)(g(x)-g(y))^2$ and $\left\| g \right\|_2^2 = \sum_{x\in B_R(x_0)} m(x)g(x)^2$.
	Hence,
	\begin{align*}
	\sum_{x \in B_R(x_0)} m(x)(f(x)-f_B)^2 &= \left\| g \right\|_2^2 \leq \frac{1}{\lambda_1(G_{B_R(x_0)})} \mathcal E(g) \\
	&= 	\frac{1}{\lambda_1(G_{B_R(x_0)})} \cdot \frac 1 2 \sum_{x,y \in B_{R}(x_0)} w(x,y)(f(x)-f(y))^2\\
	&\leq \frac{R^2}{c^2} \sum_{x,y \in B_{2R}(x_0)} w(x,y)(f(x)-f(y))^2.
	\end{align*}
Thus, $G$ satisfies $P(1/c^2)$.
\end{proof}

Now we are ready to prove Theorem~\ref{thm: CD implies Poincare inequality}.
\begin{proof}[Proof of Theorem~\ref{thm: CD implies Poincare inequality}]
	Let $x_0 \in V=\IN$ and let $R>0$. Then $B_R(x_0) = [x_0-R,x_0+R] \cap \IN = \{A,\ldots,B\}$ for some $A<B\in \IN$ with $B-A \leq 2R$.
	Due to Theorem~\ref{thm: measure growth}, $p=d_+-\frac 1 2\geq 0$.
	Thus by Lemma~\ref{lemma: Cheeger intervals}, $$h(G_{A,B}) \geq \frac 1 {2(B-A)} \geq \frac 1 {4R}.$$
	By Lemma~\ref{lemma: Cheeger Poincare}, this implies $P(16)$.
\end{proof}
We remark that the Poincar\'e inequality on linear graphs encodes a certain uniformity of the volume growth which seems to be rather weak (it already follows from $p\geq 0$ on linear normalized graphs) and therefore, it does not depend on the dimension bound.




\section{Applications}
As further applications of the theory of one-dimensional graphs, we generalize the volume doubling and the stochastic completeness to weakly spherically symmetric graphs. Moreover, we construct infinite linear graphs with a uniform positive curvature bound.

\subsection{From linear to weakly spherically symmetric graphs}\label{s:sym}

Weakly spherically symmetric graphs have been introduced in \cite{KellerLenzWojciechowski10} and can be seen as a generalization of linear graphs.
We show that curvature bounds transfer from linear to weakly symmetric graphs.

\begin{defn}
	Let $G=(V,w,m)$ be a graph and let $x_0 \in V$.
	We write 
	$$d_+(y):=\sum_{z:d(z,x_0)>d(y,x_0)} w(y,z)$$
	and
	$$d_-(y):=\sum_{z:d(z,x_0)<d(y,x_0)} w(y,z).$$	
	We say a graph $G=(V,w,m)$ is \emph{weakly spherically symmetric} w.r.t. $x_0 \in V$
	if $d_+(y_1)=d_+(y_2)$ and $m(y_1)=m(y_2)$ whenever $d(x_0,y_1)=d(x_0,y_2)$.
	We say a graph $G=(V,w,m)$ is \emph{normalized symmetric} w.r.t. $x_0 \in V$
	if $G$ is weakly spherically symmetric and if  $d_+(y)+d_-(y)=1$ for all $y$.
	We say a graph $G=(V,w,m)$ is \emph{physically symmetric} w.r.t. $x_0 \in V$
	if $G$ is weakly spherically symmetric and if  $m(S_n(x_0))=1$ for all $n \in \IN$.

	If $G$ is weakly spherically symmetric, we write
	$G_P:=(\IN,w_P,m_P)$ with
	$m_P(n):=m(S_n(x_0))$ and $w_P(n,n+1) := w(S_n(x_0),S_{n+1}(x_0))$.
\end{defn}
Remark that normalized symmetric means that $G_P$ is normalized and that physically symmetric means that $G_P$ is physical.

\begin{theorem}\label{thm:weaklysphericalsymmetricCD}
	Let $G=(V,w,m)$ be a weakly spherically symmetric graph w.r.t. some $x_0 \in V$. Suppose $G$ is infinite and connected and satisfies $\CCD(K,D)$. Then also $G_P$ satisfies $\CCD(K,D)$.
	\begin{proof}
		Let $f_P : \IN \to \IR$ and let $f(y) := f_P(d(x_0,y))$.
		Observe that \cite[Lemma~3.3]{KellerLenzWojciechowski10} implies for all $x \in V$,
		$$
		\Delta f_P(d(x,x_0)) = \Delta f(x).
		$$
	Since the $\Gamma$-calculus is defined via the Laplace operator $\Delta$, we obtain
	\begin{align*}
	0 &\leq \Gamma_2 f(x) - K\Gamma f(x) - \frac 1 D \Delta f(x)^2 \\&=\Gamma_2 f_P(d(x,x_0)) - K\Gamma f_P(d(x,x_0)) - \frac 1 D \Delta f_P(d(x,x_0))^2
	\end{align*}
where the inequality holds since $G$ satisfies $\CCD(K,D)$ by assumption.		
Hence, $G_P$ satisfies $\CCD(K,D)$ at $d(x,x_0)$ which finishes the proof since $x$ is arbitrary and $d(x,x_0)$ can be arbitrarily large since $G$ is infinite and connected.
	\end{proof}
\end{theorem}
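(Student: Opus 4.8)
The plan is to transfer the curvature inequality from $G$ to $G_P$ by lifting every test function on $G_P$ to a radial function on $G$ and exploiting the fact that the Laplacian commutes with radialization. First I would fix an arbitrary $f_P:\IN \to \IR$ and define its radial lift $f:V\to\IR$ by $f(y):=f_P(d(x_0,y))$. The analytic cornerstone is \cite[Lemma~3.3]{KellerLenzWojciechowski10}, which guarantees $\Delta f(x)=\Delta f_P(d(x,x_0))$ for every $x\in V$; in particular $\Delta f$ is again radial, i.e.\ constant on spheres $S_n(x_0)$.

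Next I would propagate this intertwining through the whole $\Gamma$-calculus. Since products of radial functions are radial and $\Delta$ preserves radiality by the above, the definition $\Gamma(f,g)=\tfrac12(\Delta(fg)-f\Delta g-g\Delta f)$ shows that $\Gamma(f)$ is radial and equals $\Gamma f_P$ (computed in $G_P$) evaluated at $d(x,x_0)$. Iterating once more, since $\Gamma_2(f)=\tfrac12\Delta\Gamma(f)-\Gamma(f,\Delta f)$ only involves $\Delta$ applied to the radial functions $\Gamma(f)$ and $\Delta f$, the same reasoning gives $\Gamma_2(f)(x)=\Gamma_2 f_P(d(x,x_0))$.

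With these identities established, the curvature bound descends at once. I would fix $n\in\IN$; because $G$ is infinite and connected the sphere $S_n(x_0)$ is nonempty, so I may choose $x$ with $d(x,x_0)=n$. Applying the assumed $\CCD(K,D)$ at $x$ to the lift $f$ yields
\begin{align*}
\Gamma_2 f_P(n)=\Gamma_2 f(x)&\geq\frac1D(\Delta f)^2(x)+K\Gamma f(x)\\
&=\frac1D(\Delta f_P)^2(n)+K\Gamma f_P(n).
\end{align*}
As $f_P$ and $n$ are arbitrary, this is precisely $\CCD(K,D,n)$ for $G_P$ at every $n$, hence $\CCD(K,D)$.

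The hard part will be organizational rather than analytic: I must verify carefully at each stage that radiality is preserved and that the $\Gamma$ and $\Gamma_2$ operators of $G_P$ genuinely coincide with the restrictions of those of $G$ to radial functions. Everything hinges on the single identity $\Delta f=\Delta f_P\circ d$ from \cite{KellerLenzWojciechowski10}; granting it, no inequality needs re-proving, only the algebraic compatibility of the Laplacians under projection. The infiniteness and connectedness hypotheses enter only to ensure that \emph{every} radius $n\in\IN$ is realized by some vertex, so that the pointwise bound on $G$ translates into the condition at all vertices of $G_P$.
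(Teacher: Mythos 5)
Your proposal is correct and follows essentially the same route as the paper: lift $f_P$ radially, invoke \cite[Lemma~3.3]{KellerLenzWojciechowski10} to intertwine the Laplacians, propagate this through $\Gamma$ and $\Gamma_2$, and apply $\CCD(K,D)$ on a vertex of each sphere, with infiniteness and connectedness guaranteeing every radius is realized. In fact you spell out the radiality-propagation step for $\Gamma$ and $\Gamma_2$ more explicitly than the paper, which compresses it into the remark that the $\Gamma$-calculus is defined via $\Delta$.
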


Applying this theorem and Corollary~\ref{cor:CDimpliesVD} immediately yields volume normalized symmetric graphs with non-negative curvature.
\begin{corollary}\label{coro:symm1}
Let $G=(V,w,m)$ be a normalized symmetric graph w.r.t. some $x_0 \in V$. Suppose $G$ is infinite and connected and satisfies $\CCD(0,D)$ for some $D\geq4$. Then, one has $\mathcal{VD}(2^{D-1})$.
\end{corollary}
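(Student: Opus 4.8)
The plan is to descend to the projected linear graph $G_P=(\IN,w_P,m_P)$ and to reuse the volume doubling already established for normalized linear graphs. Since $G$ is normalized symmetric, the remark following the definition of symmetric graphs tells us that $G_P$ is a normalized linear graph, and $G_P$ is infinite and connected because $G$ is (so every distance $n\in\IN$ from $x_0$ is attained). By Theorem~\ref{thm:weaklysphericalsymmetricCD}, the condition $\CCD(0,D)$ transfers from $G$ to $G_P$, so $G_P$ is an infinite, connected, normalized linear graph satisfying $\CCD(0,D)$ with $D\geq 4$. Corollary~\ref{cor:CDimpliesVD} then gives $\mathcal{VD}(2^{D-1})$ for $G_P$, which is the input I want.

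With the doubling of $G_P$ in hand, I would transfer it to $G$ through the volume-profile identity centered at the point of symmetry. Since $B_R(x_0)=\bigsqcup_{n=0}^{\lfloor R\rfloor} S_n(x_0)$ and $m$ is constant on each sphere, one has for every $R$
\begin{align*}
m(B_R(x_0)) = \sum_{n=0}^{\lfloor R\rfloor} m(S_n(x_0)) = \sum_{n=0}^{\lfloor R\rfloor} m_P(n) = m_P(B_R(0)),
\end{align*}
where $B_R(0)$ denotes the combinatorial ball in $G_P$. Feeding the doubling of $G_P$ into this identity yields
\begin{align*}
m(B_{2R}(x_0)) = m_P(B_{2R}(0)) \leq 2^{D-1}\, m_P(B_R(0)) = 2^{D-1}\, m(B_R(x_0)),
\end{align*}
i.e. the doubling inequality at the center $x_0$, valid for all $R$.

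The hard part will be that Definition~\ref{def:volumedoubling} demands the doubling inequality at \emph{every} center $x\in V$, and for $x$ far from $x_0$ the ball $B_R(x)$ is no longer radially symmetric, so the clean profile identity above is unavailable. To cover an arbitrary center I would exploit the two structural facts already proved for $G_P$: the radial measure is non-decreasing (Theorem~\ref{thm: measure growth} gives $p\geq 0$, hence $m_P$ non-decreasing) and it satisfies sphere doubling $\mathcal{SD}(2^{D-2})$ (Theorem~\ref{thm: CD implies SD}). The idea is to slice $G$ into the radial spheres $S_n(x_0)$ meeting $B_R(x)$ and $B_{2R}(x)$ and to bound the sphere masses against one another using $\mathcal{SD}$ and monotonicity, exactly in the spirit of the passage from $\mathcal{SD}$ to $\mathcal{VD}$ in Theorem~\ref{thm: SD implies VD}. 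The delicate regime is small radius $R$ with large $d(x,x_0)$, where one needs local doubling of the radial profile; there the bounded consecutive ratio $m_P(n+1)/m_P(n)\leq 2^{D-2}$ coming from $\mathcal{SD}$ keeps the constant under control. I expect that making the estimate uniform over all $x$ and $R$, rather than the immediate center case, is where the genuine work lies.
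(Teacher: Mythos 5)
Your argument, up through the profile identity at $x_0$, is precisely the paper's proof: the paper's entire justification of Corollary~\ref{coro:symm1} is the one sentence combining Theorem~\ref{thm:weaklysphericalsymmetricCD} (transfer of $\CCD(0,D)$ from $G$ to $G_P$) with Corollary~\ref{cor:CDimpliesVD}, with the identification $m(B_R(x_0))=m_P(B_R(0))$ left implicit; you make that identification explicit, which if anything improves on the original. The ``hard part'' you then raise --- that Definition~\ref{def:volumedoubling} quantifies over \emph{every} center $x\in V$, while the projection only controls balls around $x_0$ --- is a step the paper silently skips, so on this point you are auditing the paper rather than deviating from it.

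Be aware, though, that the repair you sketch cannot be completed with the tools you cite. Sphere doubling (Theorem~\ref{thm: CD implies SD}) and monotonicity of the radial profile (Theorem~\ref{thm: measure growth}) are statements about the numbers $m_P(n)=m(S_n(x_0))$ alone, and these numbers do not determine off-center doubling: a weakly spherically symmetric tree with large branching numbers has the same projection $G_P$ --- hence the same $\mathcal{SD}$ constant and the same non-decreasing $m_P$ --- as a linear graph with that radial profile, yet for $x$ far from $x_0$ and moderate $R$ the ball $B_R(x)$ occupies a single branch of each sphere it meets, while $B_{2R}(x)$ reaches a common ancestor and swallows many sibling branches, so the ratio $m(B_{2R}(x))/m(B_R(x))$ is unbounded. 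Such a tree will not itself satisfy $\CCD(0,D)$, but your slicing plan never invokes $\CCD$ on $G$ beyond its consequences for $G_P$, so the plan cannot distinguish the tree from the linear graph and hence cannot prove the conclusion; any genuine proof of doubling at all centers must use the curvature condition on $G$ directly, e.g.\ to control how measure distributes \emph{within} spheres. If you only claim doubling for balls centered at the symmetry point --- which is all that the paper's own one-line proof actually delivers --- then your write-up is complete and correct as it stands.
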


Applying Theorem~\ref{thm:weaklysphericalsymmetricCD} and Theorem~\ref{thm:scwithdecay} immediately yields stochastic completeness for physically symmetric graphs with curvature not decaying faster than $-R^2$ w.r.t some intrinsic metric.
\begin{corollary}\label{coro:symm2}
Let $G=(V,w,m)$ be a physically symmetric graph w.r.t. some $x_0 \in V$. Let $\rho$ be an intrinsic metric on $G$. Suppose $G$ is infinite and connected and satisfies $\mathcal K_{G,x}(\infty) \geq -C(R^2+1)$ for all $x \in B_R^{\rho}(x_0)$, all $R >0$, and some constant $C$. Then, $G$ is stochastically complete.
\end{corollary}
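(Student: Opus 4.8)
The plan is to route everything through the radial graph $G_P$ and then invoke the one–dimensional result Theorem~\ref{thm:scwithdecay}. Three transfers are needed: of the curvature lower bound from $G$ to $G_P$, of the intrinsic metric $\rho$ to an intrinsic metric $\rho_P$ on $G_P$, and of stochastic completeness back from $G_P$ to $G$.

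First I would transfer the curvature. The argument proving Theorem~\ref{thm:weaklysphericalsymmetricCD} is purely local: evaluated at a single vertex it shows that $\CCD(K,\infty,x)$ on $G$ forces $\CCD(K,\infty,d(x,x_0))$ on $G_P$. Hence $\mathcal K_{G_P,n}(\infty)\geq\mathcal K_{G,x}(\infty)$ for every $x\in S_n(x_0)$. Choosing the vertex $x\in S_n(x_0)$ nearest to $x_0$ and applying the hypothesis with $R=\rho(x,x_0)=\operatorname{dist}_\rho(x_0,S_n(x_0))$ gives
\[
\mathcal K_{G_P,n}(\infty)_-\ \leq\ \mathcal K_{G,x}(\infty)_-\ \leq\ C\bigl(\operatorname{dist}_\rho(x_0,S_n(x_0))^2+1\bigr).
\]

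Next I would radialize $\rho$. Writing $E_k$ for the edges joining $S_k(x_0)$ to $S_{k+1}(x_0)$, let $\rho_P$ be the path metric on $G_P=(\IN,w_P,m_P)$ generated by
\[
\rho_P(k,k+1)^2:=\frac1{w_P(k,k+1)}\sum_{\{y,z\}\in E_k}w(y,z)\rho(y,z)^2 .
\]
Summing the defining inequality $\sum_{z\sim y}w(y,z)\rho(y,z)^2\leq m(y)$ over $y\in S_k(x_0)$ and discarding the terms not coming from $E_{k-1}$ or $E_k$ yields
\[
w_P(k-1,k)\rho_P(k-1,k)^2+w_P(k,k+1)\rho_P(k,k+1)^2\leq m(S_k(x_0))=m_P(k),
\]
so $\rho_P$ is intrinsic on the physical linear graph $G_P$. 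To compare distances, I would test against the $1$-Lipschitz function $u:=\rho(x_0,\cdot)$ and let $U(k)$ be the average of $u$ over $S_k(x_0)$ with respect to the sphere measures coupled by the forward weights; since $d_+$ is constant on spheres these measures are uniform, and Cauchy--Schwarz gives $|U(k+1)-U(k)|\leq\rho_P(k,k+1)$. As $U(0)=0$ and $U(n)\geq\min_{S_n(x_0)}u=\operatorname{dist}_\rho(x_0,S_n(x_0))$, telescoping yields $\operatorname{dist}_\rho(x_0,S_n(x_0))\leq\rho_P(0,n)$. Combined with the curvature estimate this gives $\mathcal K_{G_P,n}(\infty)_-\in\mathcal O(\rho_P(0,n)^2)$, so Theorem~\ref{thm:scwithdecay} applies to $G_P$ and shows $\sum_n n/w_P(n,n+1)=\infty$. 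Finally, for a weakly spherically symmetric graph the escape of the process to infinity is governed by its radial part, so $G$ is stochastically complete if and only if $G_P$ is; by \cite{KellerLenz12,KellerLenzWojciechowski10} this is precisely the condition just verified, which proves the corollary.

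I expect the genuine obstacle to be the distance comparison $\operatorname{dist}_\rho(x_0,S_n(x_0))\leq\rho_P(0,n)$. The clean Cauchy--Schwarz step needs the forward edge weights across each sphere to couple the (uniform) vertex measures of consecutive spheres, which is transparent when the in-degree $d_-$ is also constant on spheres but requires an extra argument, or a coarser estimate absorbing the sphere sizes, for a general weakly spherically symmetric graph. The only remaining routine point is absorbing the additive constant, using $\rho_P(0,n)>0$ for large $n$; the fully degenerate case $\rho\equiv 0$ forces a uniform lower curvature bound and is then handled directly by Theorem~\ref{thm:weightgrowth}(2).
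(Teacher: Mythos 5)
Your proposal is correct, and its skeleton coincides with the paper's proof, which is a single sentence: localize Theorem~\ref{thm:weaklysphericalsymmetricCD} to transfer the curvature bound to $G_P$, apply Theorem~\ref{thm:scwithdecay} to obtain $\sum_n n/w_P(n,n+1)=\infty$, and conclude stochastic completeness of $G$ via \cite{KellerLenz12,KellerLenzWojciechowski10} (for physically symmetric graphs $m(B_r(x_0))=r+1$, so the radial criterion of \cite{KellerLenzWojciechowski10} is exactly the linear one). What you add, and what the paper silently skips, is the transfer of the intrinsic metric: Theorem~\ref{thm:scwithdecay} requires an intrinsic metric \emph{on $G_P$} with quadratic control of the curvature, whereas the hypothesis is phrased w.r.t.\ an intrinsic metric on $G$. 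Your annulus-wise quadratic mean $\rho_P(k,k+1)^2=w_P(k,k+1)^{-1}\sum_{\{y,z\}\in E_k}w(y,z)\rho(y,z)^2$ (with $E_k$ the edges between $S_k(x_0)$ and $S_{k+1}(x_0)$) is intrinsic by exactly the summation you perform, and the comparison $\operatorname{dist}_\rho(x_0,S_n(x_0))\leq\rho_P(0,n)$ is genuinely needed: a naive choice such as $\rho_P(k,k+1):=\operatorname{dist}_\rho(S_k(x_0),S_{k+1}(x_0))$ is also intrinsic but fails the comparison badly (two rays with alternating edge lengths $1,0,1,0,\dots$ and $0,1,0,1,\dots$ give $\rho_P(0,n)=0$ while $\operatorname{dist}_\rho(x_0,S_n(x_0))\sim n/2$), so your quadratic-mean construction is the right one. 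Your handling of the degenerate case $\rho_P(0,n)\equiv 0$ via a uniform curvature bound and Theorem~\ref{thm:weightgrowth}(2) is also correct.

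Concerning the obstacle you flag: you are right that the coupling step needs the backward marginal $d_-(\cdot)/w_P(k,k+1)$ to be uniform on $S_{k+1}(x_0)$, i.e.\ $d_-$ constant on spheres, which the paper's definition of weak spherical symmetry does not state. But this is an omission in the paper's definition rather than a gap in your argument: the identity $\Delta f_P(d(x,x_0))=\Delta f(x)$ on which Theorem~\ref{thm:weaklysphericalsymmetricCD} itself rests fails for radial $f$ unless $d_-$ is constant on spheres (the radial part of $\Delta f(x)$ contains the term $d_-(x)\bigl(f_P(n-1)-f_P(n)\bigr)/m(x)$), and the definition in \cite{KellerLenzWojciechowski10}, from which that identity is quoted, requires both inner and outer degrees to be spherically symmetric. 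Under that intended definition your proof closes completely, and in fact documents the metric-transfer step that the paper's one-line proof leaves entirely implicit.
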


\subsection{Infinite graphs with positive curvature bounds}\label{s:egg}

Recently it has been shown that there are infinite weighted graphs with a uniformly positive curvature in the sense of Ollivier, see \cite{MuenchWojciechowski17}. 
Naturally, the question arises if this transfers to Bakry-\'Emery curvature.
We give a class of examples of non-Feller graphs satisfying $\CCD(K,D)$ with $K>0$ and $D<\infty$.
To do so, we first give a cutoff lemma.
\begin{lemma}\label{lem:cutoff}
Let  $G=(\IZ,w,m)$ be a linear graph. Let $\widetilde G$ be the restriction of $G$ on $\IN$, i.e., $\widetilde G = (\IN,\widetilde w, \widetilde m)$ with
$\widetilde w = w |_{\IN \times \IN}$ and $\widetilde m = m |_\IN$.
Then, $\mathcal K_{\widetilde G,x}(\mathcal N) \geq \mathcal K_{G,x}(\mathcal N)$ for all $x \in \IN$ and all $\mathcal N >0$.
\end{lemma}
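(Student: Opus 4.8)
The plan is to reduce the claim to the pointwise curvature characterization and then to track the single local quantity that changes under restriction. By definition $\mathcal K_{G,x}(\mathcal N) = \sup\{K \in \IR : \CCD(K,\mathcal N,x)\text{ holds}\}$, so it suffices to prove that for each $x\in\IN$ and each $K\in\IR$, if $\CCD(K,\mathcal N,x)$ holds in $G$ then it also holds in $\widetilde G$; this gives an inclusion of the admissible sets of $K$, whence the asserted inequality of suprema. First I would record that the only deleted edge is $\{-1,0\}$ and that $\widetilde m = m|_\IN$, so the only degree that changes is the in-degree at $0$: one has $d_-^{\widetilde G}(0)=0$ while $d_-^G(0)=w(0,-1)/m(0)>0$, whereas $d_+(0)$ and all the degrees $d_\pm$ at vertices $\geq 1$ are unchanged.

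Next I would localize where this matters. By Proposition~\ref{prop:equv} the condition $\CCD(K,\mathcal N,x)$ depends only on the degrees at $x-1,x,x+1$, so $d_-(0)$ enters only the conditions at $x=0$ and $x=1$; for every $x\geq 2$ the characterizing $2\times 2$ matrix from Proposition~\ref{prop:equv} is literally the same for $G$ and $\widetilde G$, and there is nothing to prove. For $x=1$ I would use the positive-semidefinite matrix characterization following Proposition~\ref{prop:equv}: the entries $d_\pm(1)$, the off-diagonal $(1-\tfrac 2 {\mathcal N})d_-(1)d_+(1)$, and $W_+(1)$ are unchanged, while $W_-(1)$ increases by exactly $\tfrac 1 2 d_-^G(0)$ because the summand $-d_-(0)$ in $W_-(1)$ drops to zero. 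Hence the matrix for $\widetilde G$ at $x=1$ equals the matrix for $G$ plus the positive-semidefinite diagonal matrix $\operatorname{diag}\!\big(\tfrac 1 2 d_-(1)\,d_-^G(0),\,0\big)$, so positive semidefiniteness is preserved and $\CCD(K,\mathcal N,1)$ transfers from $G$ to $\widetilde G$.

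For the boundary vertex $x=0$, the vertex $0$ has no left neighbor in $\widetilde G$, so by the convention in Proposition~\ref{prop:equv} the condition reduces to the single inequality $W_+^{\widetilde G}(0)\geq 0$. Since the term $-d_-(0)$ appears in $W_+(0)$, one has $W_+^{\widetilde G}(0)=W_+^G(0)+\tfrac 1 2 d_-^G(0)$. On the other hand, $\CCD(K,\mathcal N,0)$ in $G$ forces the diagonal entry $d_+(0)W_+^G(0)\geq 0$, and $d_+(0)>0$ by connectedness, so $W_+^G(0)\geq 0$ and therefore $W_+^{\widetilde G}(0)\geq 0$. This handles all three cases.

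The only point requiring care --- more a bookkeeping subtlety than a genuine obstacle --- is the sign of $1-\tfrac 2 {\mathcal N}$, which is negative when $\mathcal N<2$. This is precisely why I would run the $x=1$ step through the positive-semidefinite order of the full matrices, since adding a positive-semidefinite diagonal matrix is monotone for every $\mathcal N>0$ and thus sidesteps any case distinction on the sign of $1-\tfrac 2 {\mathcal N}$ that the scalar product inequality in \eqref{eq:equcond} would otherwise require. The remaining verifications --- that the listed degrees are genuinely unchanged and that $d_-^G(0)>0$ --- are immediate from the definitions and connectedness.
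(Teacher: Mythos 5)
Your proof is correct and is precisely the ``straightforward case distinction for $x$ and Proposition~\ref{prop:equv}'' that the paper's one-line proof invokes: the restriction only changes $d_-(0)$, which enlarges $W_-(1)$ and $W_+(0)$ by $\tfrac12 d_-^G(0)$, so the characterizing matrices can only become more positive semidefinite. Running the $x=1$ case through the matrix order (rather than the scalar conditions in \eqref{eq:equcond}) is exactly the right way to avoid a case distinction on the sign of $1-\tfrac{2}{\mathcal N}$, so you have simply written out in full the argument the paper leaves to the reader.
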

\begin{proof}
This follows from a straight forward case distinction for $x$ and Proposition~\ref{prop:equv}.
\end{proof}

Using a perturbation and a self-similarity argument, we prove the following theorem.

\begin{theorem}\label{thm:infinitePosCurv}
Let $G= G^\omega_\mu :=(\IZ,w,m)$ be given by
\begin{align*}
w(n,n+1) &:= \omega^{-n} \\
m(n) &:= \mu^{-n}
\end{align*}
for some $1< \omega < \mu$.
Then, there is $K>0$ and $D<\infty$ s.t.
$$
\mathcal K_{G,x}(D) \geq K\left(\frac \mu \omega\right)^x.
$$
\end{theorem}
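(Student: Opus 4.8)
The plan is to exploit the exact self-similarity of $G=G^\omega_\mu$ and then to perturb off the endpoint case $D=\infty$, $K=0$. First I would record the degrees: writing $q:=\mu/\omega>1$, the definitions give
\[
d_+(n)=\frac{w(n,n+1)}{m(n)}=q^n,\qquad d_-(n)=\frac{w(n,n-1)}{m(n)}=\omega\,q^n .
\]
Thus the entire degree profile at $n$ equals $q^n$ times the profile at $0$. Since $W_-(n)$ is homogeneous of degree one and $W_-(n)W_+(n)-(1-\tfrac2D)^2 d_-(n)d_+(n)$ is homogeneous of degree two, jointly in the degrees and in $K$, scaling the degrees and $K$ simultaneously by $q^n$ preserves their signs. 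This is the self-similarity: $\CCD(K,D,n)$ holds if and only if $\CCD(q^{-n}K,D,0)$ holds, so that $\mathcal K_{G,n}(D)=q^n\,\mathcal K_{G,0}(D)$. It therefore suffices to exhibit one finite $D$ with $\mathcal K_{G,0}(D)>0$ and then set $K:=\mathcal K_{G,0}(D)$.

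Concretely I would substitute $K=K_0q^n$ into the expressions for $W_\pm(n)$ and factor out the common powers of $q$. With
\[
\alpha:=3-\omega+\Big(1-\tfrac4D\Big)\omega q-q,\qquad \beta:=\Big(1-\tfrac4D\Big)-\omega-q+3\omega q,
\]
a direct computation gives $2W_-(n)=q^{n-1}(\alpha-2K_0q)$ and $2W_+(n)=q^{n}(\beta-2K_0)$, while $d_-(n)d_+(n)=\omega q^{2n}$. Dividing the two conditions in \eqref{eq:equcond} by $q^{n-1}$ and $q^{2n-1}$ respectively collapses them to the two $n$-independent inequalities
\[
\alpha-2K_0q\ge 0\qquad\text{and}\qquad (\alpha-2K_0q)(\beta-2K_0)\ge 4\Big(1-\tfrac2D\Big)^2\omega q .
\]

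The crux is the endpoint $D=\infty$, $K_0=0$, where these become $\alpha_\infty\ge0$ and $\alpha_\infty\beta_\infty\ge 4\omega q$ for $\alpha_\infty=3-\omega+\omega q-q$ and $\beta_\infty=1-\omega-q+3\omega q$. Introducing $s:=\omega-1>0$ and $t:=q-1>0$ makes both transparent: one has $\alpha_\infty=2+st>0$, and expanding the product yields the identity
\[
\alpha_\infty\beta_\infty-4\omega q=st\big(4+2s+2t+3st\big)>0 .
\]
This factorization through the positive quantities $s$ and $t$ is the only genuinely computational point, and I expect it to be the heart of the argument.

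Finally I would conclude by a perturbation/openness argument. At $(D,K_0)=(\infty,0)$ the first inequality is strict and, by the identity above, so is the second; moreover $\alpha$, $\beta$, $(1-\tfrac2D)^2$ depend continuously on $D$ while both inequalities depend continuously on $K_0$. Hence there exist a finite $D$ and a sufficiently small $K_0>0$ for which both displayed inequalities persist. By Proposition~\ref{prop:equv} and \eqref{eq:equcond} this says precisely that $\CCD(K_0q^n,D,n)$ holds for every $n\in\IZ$, so $\mathcal K_{G,n}(D)\ge K_0q^n=K_0\big(\tfrac\mu\omega\big)^n$, and the theorem follows with $K:=K_0$.
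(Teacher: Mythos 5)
Your proposal is correct and follows essentially the same route as the paper's proof: both exploit the self-similarity by substituting the exponentially scaled bound $K_0 q^n$ into $W_\pm(n)$, factor out the common power of $q=\mu/\omega$ to reduce \eqref{eq:equcond} to $n$-independent inequalities, verify strict positivity at the endpoint $(D,K_0)=(\infty,0)$ by an explicit algebraic identity, and conclude by continuity in $K_0$ near $0$ and $1/D$ near $0$. Your substitution $s=\omega-1$, $t=q-1$ yielding $\alpha_\infty\beta_\infty-4\omega q = st(4+2s+2t+3st)$ is just a cosmetic repackaging of the paper's factorization $F_-F_+-4\omega=\frac{(\alpha-1)(\omega-1)(3-\omega+\alpha(3\omega-1))}{\alpha}$.
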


\begin{proof}
We write $\alpha := \mu/\omega$. Observe $d_+(n) = \alpha^n$ and $d_-(n)=\omega \alpha^n$.
Hence,
\begin{align*}
W_-(n) &= \frac12\left( -d_-(n-1) + 3 d_+(n-1) + \Big( 1- \frac 4 D \Big)d _-(n)-d_+(n)- 2K(n)\right)   \\
&=\frac 1 2 \left(- \omega \alpha^{n-1} + 3 \alpha^{n-1} + \left( 1 - \frac 4 D \right)\omega \alpha^n - \alpha^n - 2K \alpha^n \right)\\
&=\frac 1 2 \alpha^n \left(- \frac \omega \alpha +\frac  3 \alpha + \left( 1 - \frac 4 D \right)\omega  - 1 - 2K \right)\\
&=:\frac 1 2 \alpha^n  F_-(\alpha,\omega,D,K)
\end{align*}
and analogously,
\begin{align*}
W_+(n) &= \frac12\left( -d_+(n+1) + 3 d_-(n+1) + \Big( 1- \frac 4 D\Big)d _+(n)-d_-(n)- 2K(n)\right) \\
&=\frac 1 2 \alpha^n \left(-\alpha + 3 \omega \alpha + \Big( 1- \frac 4 D\Big) - \omega - 2K  \right)\\
&=:\frac 1 2 \alpha^n  F_+(\alpha,\omega,D,K).
\end{align*}
It is easy to see that $F_\pm(\alpha,\omega,D,K)>0$ if $\alpha>1$ and $\omega >1$ and if $D$ large enough and $K>0$ small enough.
Hence due to Proposition~\ref{prop:equv}, $\CCD(K(n),D,n)$ is satisfied if and only if
\begin{align*}
\frac 1 4 \alpha^{2n} F_-(\alpha,\omega,D,K)F_+(\alpha,\omega,D,K) &\geq \left( 1 - \frac 2 D \right)^2 d_+(n)d_-(n)\\
&=  \left( 1 - \frac 2 D \right)^2 \omega \alpha^{2n}.
\end{align*}

We  now show that
$$
\frac 1 4 F_-(\alpha,\omega,\infty,0)F_+(\alpha,\omega,\infty,0) > \omega
$$
which would finish the proof due to continuity of all terms in $K$ around zero and in  $D$ around infinity.
We have
\begin{align*}
F_-(\alpha,\omega,\infty,0)F_+(\alpha,\omega,\infty,0) - 4 \omega 
&=\left(-\frac \omega \alpha + \frac  3 \alpha + \omega - 1\right)\left(-\alpha + 3\omega\alpha + 1 - \omega\right) - 4\omega\\
&=\frac{(\alpha - 1)(\omega - 1)(3 - \omega + \alpha\left( 3 \omega - 1\right))}{\alpha} \\
&>0
\end{align*}
since $\alpha > 1 $ and $\omega>1$.  This finishes the proof.
\end{proof}

Before stating the next result, we remind the reader of the Feller property on graphs.
A graph $G=(V,w,m)$ is Feller, if $e^{t\Delta}: C_0(V) \to C_0(V)$ where $C_0(V)$ is the closure of finitely supported functions under the supremum norm.
The following corollary is obtained by
restricting the graph from Theorem~\ref{thm:infinitePosCurv} to $\IN$ which does not decrease the curvature due to Lemma~\ref{lem:cutoff}. 

\begin{corollary}
There exist $K>0$ and $D<\infty$ and non-Feller and non-complete graphs satisfying $\CCD(K,D)$.
\end{corollary}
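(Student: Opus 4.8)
The plan is to take the graph $G = G^\omega_\mu = (\IZ,w,m)$ constructed in Theorem~\ref{thm:infinitePosCurv} for a fixed pair $1 < \omega < \mu$, and to let $\widetilde G = (\IN,\widetilde w,\widetilde m)$ be its restriction to $\IN$ as in Lemma~\ref{lem:cutoff}. Theorem~\ref{thm:infinitePosCurv} provides $K>0$ and $D<\infty$ with $\mathcal K_{G,x}(D) \geq K(\mu/\omega)^x$, and Lemma~\ref{lem:cutoff} then yields $\mathcal K_{\widetilde G,x}(D) \geq \mathcal K_{G,x}(D) \geq K(\mu/\omega)^x \geq K$ for every $x \in \IN$, since $\mu/\omega > 1$. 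Hence $\widetilde G$ satisfies $\CCD(K,D)$ with $K>0$ and $D<\infty$, and it only remains to show that $\widetilde G$ is neither complete nor Feller.

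For non-completeness I would argue directly from the definition. Note that $d_+(n) = \widetilde w(n,n+1)/\widetilde m(n) = (\mu/\omega)^n =: \alpha^n$ with $\alpha > 1$. If $\eta$ is any finitely supported function with $\Gamma(\eta)\leq \eps$, then from $\Gamma(\eta)(n) \geq \tfrac12 d_+(n)(\eta(n+1)-\eta(n))^2$ one gets $|\eta(n+1)-\eta(n)| \leq \sqrt{2\eps}\,\alpha^{-n/2}$, and summing the telescoping series (using that $\eta$ vanishes at infinity) gives $\eta(0) \leq \sqrt{2\eps}\sum_{n\geq 0}\alpha^{-n/2} = \sqrt{2\eps}/(1-\alpha^{-1/2})$. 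Applying this to a hypothetical sequence $\eta_k$ witnessing completeness, with $\eps = 1/k$, forces $\eta_k(0) \to 0$, contradicting $\eta_k(0)\to 1$. Thus no such sequence exists and $\widetilde G$ is not complete. Geometrically this reflects that the degrees grow geometrically, so $\widetilde G$ has finite diameter with respect to every intrinsic metric.

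The non-Feller part is the main obstacle, being the only step that requires input beyond this excerpt. I would reduce it to the classical boundary classification of the associated birth--death chain (equivalently, invoke the Feller criterion of \cite{wojciechowski2017feller}). With birth rates $\lambda_n = d_+(n) = \alpha^n$ and reversible measure $\pi_n \propto \widetilde m(n) = \mu^{-n}$, one checks $\lambda_n\pi_n \propto \omega^{-n}$, so that $\sum_n \tfrac{1}{\lambda_n\pi_n}\sum_{k\leq n}\pi_k = \infty$ (the chain is conservative, i.e.\ non-explosive) while $\sum_n \pi_n \sum_{k\leq n}\tfrac{1}{\lambda_k\pi_k} \propto \sum_n (\omega/\mu)^n < \infty$. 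Together these conditions say that $\infty$ is an \emph{entrance} boundary. For an entrance boundary the transition probabilities satisfy $\lim_{x\to\infty} p_t(x,y) = p_t(\infty,y) > 0$, so for $f \in C_0(\IN)$ the function $e^{t\Delta}f$ tends to $\sum_y p_t(\infty,y)f(y)$ as $x\to\infty$, which is generically nonzero; hence $e^{t\Delta}$ does not map $C_0(\IN)$ into $C_0(\IN)$ and $\widetilde G$ is not Feller. The delicate point, and the step I expect to require the most care, is making this entrance-boundary heuristic rigorous at the level of the graph heat semigroup (identifying the operator-theoretic $\Delta$ on $\ell^2(\widetilde m)$ with the minimal birth--death process), for which the cleanest route is to quote the characterization in \cite{wojciechowski2017feller} directly rather than redoing the boundary analysis by hand.
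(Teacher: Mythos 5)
Your proposal is correct and follows essentially the same route as the paper: the same restriction of $G_\mu^\omega$ to $\IN$ combining Theorem~\ref{thm:infinitePosCurv} with Lemma~\ref{lem:cutoff} for the curvature bound, the same telescoping-sum contradiction for incompleteness, and for the non-Feller property you ultimately invoke exactly the criterion from \cite{wojciechowski2017feller} that the paper cites (its Theorem~4.13 for spherically symmetric graphs: $m(V)<\infty$ and $\sum_n w(n,n+1)^{-1}\sum_{k>n}m(k)<\infty$). Your entrance-boundary condition $\sum_n \pi_n\sum_{k\leq n}(\lambda_k\pi_k)^{-1}<\infty$ is the same sum after a Fubini rearrangement (differing only by the convergent diagonal term $\sum_n m(n)/w(n,n+1)$), so your verification coincides with the paper's computation $\sum_n(\omega/\mu)^n<\infty$.
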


\begin{proof}
Let $G$ be the restriction to $\IN$ of $G_\mu^\omega$ from Theorem~\ref{thm:infinitePosCurv} with some $\mu>\omega>1$.
Combining Theorem~\ref{thm:infinitePosCurv} and Lemma~\ref{lem:cutoff} yields $\CCD(K,D)$ for some $K>0$ and $D<\infty$. 

We now show incompleteness by contradiction.
Suppose $G$ is complete.
Let $\eta \in C_c(V)$ s.t. $\eta(0)=1$ and $\Gamma \eta \leq \eps$.
Then, $\eta(r) - \eta(r+1) \leq \sqrt{2\eps}/\sqrt{d_+(r)}=\sqrt{2\eps}(\omega/\mu)^{r/2}.$
Therefore,
\[1=\eta(0)= \sum_{r\geq 0} \eta(r) - \eta(r+1) \leq \frac{\sqrt{2\eps}}{1 - \sqrt{\omega/\mu}}<1\] if $\eps$ small which is a contradiction, and thus, proves incompleteness.

It is left to show that the graph is not Feller.
Since $G$ is spherically symmetric, by \cite[Theorem~4.13]{wojciechowski2017feller} it suffices to show
$m(V) < \infty$ and
$$
\sum_n \frac {\sum_{k>n} m(k)}{w(n,n+1)} < \infty.
$$
But this follows since
$$
\sum_{k>n} m(k) = \sum_{k>n} \mu^{-k} = \frac{\mu^{-n}}{\mu - 1}
$$
and therefore,
\begin{align*}
\sum_n \frac {\sum_{k>n} m(k)}{w(n,n+1)} = \sum_n \frac{\mu^{-n}}{(\mu - 1)\omega^{-n}} < \infty
\end{align*}
since $\mu>\omega$. This finishes the proof.
\end{proof}

Remark that these graphs even have exponential growth of the curvature.

\bigskip
\textbf{Acknowledgements:} B. H. is supported by NSFC, grant no. 11401106. F. M. wants to thank the German National Merit Foundation for financial support.

\TOCstop

\bibliography{Bibliography}
\bibliographystyle{alpha}

Bobo Hua,\\
School of Mathematical Sciences, LMNS, Fudan University, 200433 Shanghai, China; \\
\texttt{bobohua@fudan.edu.cn}
\\

Florentin M\"unch, \\
Department of Mathematics,
University of Potsdam, Potsdam, Germany,\\
Currently:
Center of Mathematical Sciences and Applications, Harvard University, Cambridge MA, USA\\
\texttt{chmuench@uni-potsdam.de}\\

\TOCstart

\end{document}